\documentclass[11pt, a4paper,reqno]{amsart}
\usepackage{amsfonts, amssymb, amscd, amsmath, amsthm}
\usepackage{setspace}
\usepackage[utf8]{inputenc}
\usepackage{graphicx}
 \usepackage{float}
\usepackage{fullpage}
\allowdisplaybreaks 
\usepackage{mathtools}
\linespread{1}
\newtheorem{thm}{Theorem}[section]
\newtheorem{lemma}[thm]{Lemma}
\newtheorem{prop}[thm]{Proposition}

\newtheorem{defn}[thm]{Definition}
\newtheorem{eg}[thm]{Example}

\newtheorem{Question}[thm]{Question}

\newtheorem{problem}[thm]{Problem}
\newtheorem{rmk}[thm]{Remark}



\newcommand{\A}{\mathcal{A}}

 \usepackage[bookmarksnumbered, colorlinks, plainpages]{hyperref}
  \hypersetup{colorlinks=true,linkcolor=blue, anchorcolor=green, citecolor=cyan, urlcolor=blue, filecolor=magenta, pdftoolbar=true}

\usepackage[T1]{fontenc}%
\allowdisplaybreaks
\usepackage{comment}
\title{Block decompositions of operator moment dilations}
\author{B. V. Rajarama Bhat}
\address{Statistics and Mathematics Unit, Indian Statistical Institute, R. V. College Post, Bangalore 560059, India}
\email{bhat@isibang.ac.in}
\author{Anindya Ghatak}
\address{Bennett University,Tech Zone 2, Greater Noida, Uttar Pradesh 201310, India}
\email{anindya.ghatak123@gmail.com}
\author{Santhosh Kumar Pamula}
\address{Department of Mathematical Sciences, Indian Institute of Science Education and Research (IISER) Mohali, SAS Nagar, Manauli Post, Punjab 140306, India}
\email{santhoshkp@iisermohali.ac.in} \subjclass[2010]{46L07, 47A12,
47A20, 47A57, 47B35} \keywords{Moment problem, Dilation, Block
operator, Poisson transform, Positive-definite kernel, Toeplitz
operator, Hankel operator}
\date{\today}
\begin{document}
\maketitle

%

%
%
%
%
%
%
%
%




\date{January 1, 2004}

\begin{abstract}
 Let  $\big\{A_{n}\big\}_{n\geq 1}$ be a sequence of bounded linear operators on a complex Hilbert space
   $\mathcal{H}$.  Then a bounded operator $B$ on a  Hilbert space $\mathcal{K} \supseteq \mathcal{H}$ is said to
   be a dilation of this sequence if
   \begin{equation*}
       A_{n} = P_{\mathcal{H}}B^{n}|_{\mathcal{H}} \; \text{for all}\; n\geq 1,
   \end{equation*}
where $P_{\mathcal{H}}$ is the projection of $\mathcal{K}$ onto
$\mathcal{H}.$ The question of the existence of dilation  is a
generalization of the classical moment problem. We obtain the necessary
and sufficient conditions for the existence of self-adjoint
dilation. Then  we provide explicit block operator
representations for these dilations. For example, the self-adjoint dilations can be put in block tri-diagonal form.

Given a positive invertible operator $A$, an operator $T$ is said to
be in the $\mathcal{C}_{A}$-class if the sequence $\{
A^{-\frac{1}{2}}T^nA^{-\frac{1}{2}}\}_{n\geq 1}$  admits a unitary
dilation. We identify a tractable collection of
$\mathcal{C}_A$-class operators for which isometric and unitary
dilations can be written explicitly in block operator form.
This includes the well-known $\mathcal{C}_{\rho}$-class for positive scalars $\rho$.
Here the special cases $\rho =1$ and $\rho =2$ correspond to
Sch\"{a}ffer representation for contractions and Ando's representation
for operators with a numerical radius not more than one respectively.
\end{abstract}

\maketitle
\section{Introduction}
Starting from the pioneering work of  Sz.\@-Nagy (\cite{SF10}),  the
dilation theory has played  a fundamental role in  operator theory.
This has been explored by many researchers. The early work mostly used
techniques from classical function theory and basic operator theory.
Thanks to W. Arveson, V. I. Paulsen, and  others (\cite{Paulsen 2002})
we have new tools coming from the theory of completely positive
maps. The literature here is vast, and so we simply refer to a recent
survey  of dilation theory by Orr Shalit \cite{Shalit 21} and the
references therein.

Let $\mathcal{H}$ be a complex Hilbert space and let
$\big\{A_{n}\big\}_{n\geq 1}$ be a sequence of bounded linear
operators on
   $\mathcal{H}$.  Then a bounded operator $B$
   on a  Hilbert space $\mathcal{K} \supseteq \mathcal{H}$   is said to
  be  a dilation if
   \begin{equation*}
       A_{n} = P_{\mathcal{H}}B^{n}|_{\mathcal{H}} \; \text{for all}\; n\geq 1.
   \end{equation*}
 Generally, it is convenient to include $n=0$ in the sequence, where we would
be taking $A_0=I$, the identity operator of the Hilbert space. The
operator moment problem is to determine conditions on the sequence
of operators to ensure the existence of a dilation $B$ with prescribed
property, for instance, we may require $B$ to be
unitary/isometry/self-adjoint/positive  or the
spectrum of $B$ to be contained in a given subset of complex plane.

Two special cases of this problem are very well known.  A 
result of Sz.-Nagy tells us that if $A_n=A^n, n\geq 0$ for some
operator $A$, then it admits a unitary dilation if and only if $A$
is a contraction.

Another special situation is when ${\mathcal H}$ is one-dimensional
so that $A_n$ is a sequence of scalars and $B$ is a self-adjoint
operator on ${\mathcal K}$. This amounts to requiring
$$A_n= \langle u, B^nu\rangle = \int _{\sigma (B)}x^nd\mu(x)$$
where $u$ is a unit vector in ${\mathcal K}$ and $\mu $ is the
spectral measure of $B$ coming from the vector state $\langle u,
(\cdot )u\rangle.$ In other words, this is the classical moment
problem of seeking a measure with specified moments.

The general operator moment problem is less well known, but if we
start searching, we find considerable literature scattered here and
there. This problem was already thought of by Sz.-Nagy (\cite{Nagy
1952}). Some further references can be found in \cite{Bisgard 1994,
Luminita 95, Vasilescu 09}).
 Also, we \textcolor{black}{refer to the} excellent book of V. I. Paulsen \cite{Paulsen 2002},
 where the author discusses the  (isometric/ unitary, positive) dilation problem for operator-valued sequences.

The initial objective of this article is to present a cohesive
summary of fundamental findings related to the operator moment
problem. We hope that this helps us to provide
some historical context to the results presented here. This is not
intended to be a survey.
Here in we mostly use modern
techniques  coming from the theory of completely positive maps or
we use the method of positive kernels to show the existence of the
dilation. The main aspect that we emphasize is the existence of dilations as block operator matrices, which can be obtained using the Sch\"{a}ffer type construction for such matrices.
 This seems to be a fairly general phenomenon.
This is very useful because it helps us  visualize the dilation. For
the powers of a contraction, the Sch\"{a}ffer construction gives the
dilation as a $2\times 2$ block operator perturbation of the
bilateral shift. In the last Section, we obtain $4\times 4$ block
operator perturbations of the bilateral shift appearing as dilation
operators.

 Throughout the article, $\mathcal{H}$ denotes a complex Hilbert space and we follow
\textcolor{black}{Physicists' convention} by considering the inner product $\langle
\cdot , \cdot \rangle$ on $\mathcal{H}$ as linear in the second
variable and anti-linear in the first variable. Also note that
$\mathcal{B}(\mathcal{H})$ denotes the space of all bounded linear
operators on $\mathcal{H}$ and $C(X)$ is the $C^{\ast}$-algebra of
all continuous functions on a compact Hausdorff space $X.$
 In general, we denote $C^{\ast}$-algebras by $\mathcal{A}, \mathcal{B}$ etc. In particular, the $C^{\ast}$ algebra of all $n\times n$
matrices with entries from $\mathcal{A}$ is denoted by
$M_{n}(\mathcal{A})$. For a subset $M$ of $\mathcal{H}$, the
subspace $[M]:= \overline{\rm span}(M)$ is the smallest closed
subspace of $\mathcal{H}$ that contains $M$.


 \begin{defn}[Dilation of operator sequences]
Let  $\{A_{n}\}_ {n\geq 0}$ be a sequence of bounded linear
operators on a Hilbert space $\mathcal{H}$ with $A_0=I$. The
operator sequence $\{A_{n}\}_ {n\geq 0}$ is said to admit a
\emph{dilation} if there \textcolor{black}{exists} a bounded linear operator $B$
 on a Hilbert space $\mathcal{K}\supseteq \mathcal{H}$ such that
\begin{equation}\label{Equation: Momentdilation}
    A_{n} = P_{\mathcal{H}} B^{n}\big|_{\mathcal{H}},\; \text{~for all~}\; n\geq 0.\;
\end{equation}
Then $B$ is called the dilation.  A dilation is said to be
positive/self-adjoint/ isometry/unitary if $B$ has that property.
A positive/self-adjoint/isometric dilation is said to be minimal if
\begin{equation}\label{Equation: Minimality} {\mathcal
K}=\overline{\rm span}\{B^n(\mathcal{H}): n\in {\mathbb
Z_+}\}.\end{equation}

\end{defn}

Some clarifications are in order. Here, by convention, for any
operator $B$, $B^0$ is taken as identity. So, condition
$A_0=P_{\mathcal{H}}B^0\big|_{\mathcal{H}}$ is superfluous once we
take $A_0=I.$ Therefore we are effectively just considering the
dilation of $\{A_n\}_{n\in {\mathbb N}}$. However, for minimality, it
is important to include $n=0$ in Equation \eqref{Equation:
Minimality}. In the case of unitary dilations, the minimality
condition in Equation \eqref{Equation: Minimality} should be replaced by
\begin{equation}\label{Equation: Unitary Minimality}
{\mathcal K}=\overline{\rm span}\{B^n(\mathcal{H}): n\in {\mathbb
Z}\}.\end{equation}

\begin{problem}\label{main problem}
Given a sequence of operators we wish to consider dilations with the
prescribed property of being positive, self-adjoint, etc. Some
natural questions that arise are the following:
\begin{enumerate}

\item \textbf{Existence:} What are necessary and sufficient conditions for existence of
dilations with prescribed property?

 \item \textbf{Uniquenss:} When a dilation with prescribed property exists, is it possible to prove uniqueness of the dilation up to unitary
equivalence  under the assumption of minimality?

\item  \textbf{Construction:} Can we explicitly construct these dilations instead of simply abstractly proving their existence and uniqueness.
\end{enumerate}
\end{problem}

The question of uniqueness is easy to answer. For easy reference,
we state it as a theorem.
\begin{thm}
Let $\{A_n\}_{n\geq 0}$ be a sequence of bounded operators on a
Hilbert space $\mathcal{H}$ admitting a
self-adjoint/positive/isometric/unitary dilation $B$ on a Hilbert
space $\mathcal{K}\supseteq \mathcal{H}.$ Then the given operator
sequence admits a minimal dilation with the same prescribed
property. Moreover, such a minimal dilation is unique up to unitary
equivalence.
\end{thm}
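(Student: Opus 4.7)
The plan splits into two parts: constructing a minimal dilation inheriting the prescribed property, then establishing uniqueness up to unitary equivalence.

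For the construction, start with any dilation $B$ on $\mathcal{K}$ and set
\[
\mathcal{K}_0 := \overline{\mathrm{span}}\{B^n\mathcal{H}:n\in\mathbb{Z}_+\}
\]
in the self-adjoint, positive, or isometric case, replacing $\mathbb{Z}_+$ with $\mathbb{Z}$ in the unitary case. Since $\mathcal{H}\subseteq \mathcal{K}_0$, the compressions satisfy $P_{\mathcal{H}}(B|_{\mathcal{K}_0})^n|_{\mathcal{H}} = P_{\mathcal{H}}B^n|_{\mathcal{H}} = A_n$. The subspace $\mathcal{K}_0$ is $B$-invariant by construction; in the self-adjoint and positive cases $B^{*}=B$ makes it $B^{*}$-invariant as well, while in the unitary case it is $B^{-1}$-invariant thanks to the $\mathbb{Z}$-indexing. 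Hence $B|_{\mathcal{K}_0}$ inherits self-adjointness, isometricity, or unitarity from $B$, and positivity follows by restricting the quadratic form $\langle\xi,B\xi\rangle\geq 0$.

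For uniqueness, let $B_1$ on $\mathcal{K}_1$ and $B_2$ on $\mathcal{K}_2$ be two minimal dilations of the prescribed type. Let $\mathcal{D}_i$ denote the algebraic span of $\{B_i^n x:x\in\mathcal{H},\,n\in\mathbb{Z}_+\}$ (or $n\in\mathbb{Z}$ in the unitary case); each $\mathcal{D}_i$ is dense in $\mathcal{K}_i$ by minimality. Define
\[
U\!\left(\sum_n B_1^n x_n\right) := \sum_n B_2^n x_n, \qquad x_n\in\mathcal{H},
\]
on $\mathcal{D}_1$. The heart of the argument is the observation that each cross inner product $\langle B_i^n x, B_i^m y\rangle$ depends only on $x,y\in\mathcal{H}$, the indices $n,m$, and the moments $\{A_k\}$, not on $i$. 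In the self-adjoint and positive cases $\langle B_i^n x, B_i^m y\rangle = \langle x, B_i^{n+m} y\rangle = \langle x, A_{n+m} y\rangle$; in the isometric case, assuming $n\leq m$ without loss of generality, $(B_i^*)^n B_i^n = I$ yields $\langle x, B_i^{m-n} y\rangle = \langle x, A_{m-n} y\rangle$; in the unitary case the same reduction works for all $n,m\in\mathbb{Z}$, with negative exponents handled by moving $(B_i^*)^k$ across and using $\langle B_i^k x, y\rangle = \langle A_k x, y\rangle$ with $x$ or $y$ in $\mathcal{H}$.

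These identities show simultaneously that $U$ is well-defined and isometric on $\mathcal{D}_1$, so it extends uniquely to a unitary $U:\mathcal{K}_1\to\mathcal{K}_2$. The relations $U|_{\mathcal{H}} = I_{\mathcal{H}}$ and $UB_1 = B_2 U$ are immediate from the definition of $U$, giving the desired equivalence of the two minimal dilations. The main obstacle is the case analysis that reduces cross inner products to moments; each individual case is routine, but the unitary case requires careful bookkeeping of negative exponents and the self-consistency $P_{\mathcal{H}}B^{-k}|_{\mathcal{H}} = A_k^{*}$ forced by unitarity.
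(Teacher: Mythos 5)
Your proposal is correct and follows essentially the same route as the paper: restrict $B$ to the closed span of $\{B^n\mathcal{H}\}$ (over $\mathbb{Z}_+$, or $\mathbb{Z}$ in the unitary case) to get a minimal dilation with the same property, and prove uniqueness by noting that all cross inner products $\langle B^m g, B^n h\rangle$ are determined by the moments, which makes the natural intertwining map a well-defined unitary. The paper states this argument in three sentences; you have merely filled in the case-by-case reductions of the inner products, which is consistent with its intent.
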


\begin{proof}
Suppose $B$ is a self-adjoint dilation on a Hilbert
space $\mathcal{K}\supseteq \mathcal{H}$. Then by restricting $B$ to
the subspace $\overline{\mbox{span}}\{B^n h: h\in \mathcal{H}, n\in
\mathbb{Z}_+\}$ we get a minimal self-adjoint dilation of
$\{A_{n}\}_{n \geq 0}$ as,
$$
\langle B^mg, B^nh\rangle  = \langle g, B^{m+n} h \rangle =  \langle g, A_{m+n} h \rangle \text{ for all }g,h \in \mathcal{H}, m,
n\in \mathbb{Z}_+
$$
If we consider two minimal dilations $B_{i}$ on $\mathcal{K}_{i}\;
(i = 1,2.)$, then we get an isometry by defining $U\big( B_{1}^{n}
h\big) = B_{2}^{n} h$ for every $h \in \mathcal{H},\; n \in
\mathbb{Z}_{+}$ and extending linearly and continuously.  It is
actually a unitary because of the minimality of the dilations.   Clearly,
the same statement holds for positive dilations.

Now assume that $B \in \mathcal{B}(\mathcal{K})$ is an isometric
dilation of the given sequence $\{A_{n}\}_{n\geq 0}$ in some
Hilbert space $\mathcal{K}$ containing $\mathcal{H}$. Since $B$ is
an isometry, the inner products  $\langle B^mg, B^nh\rangle$ are
determined as,
$$
\langle B^m g, B^n h \rangle = \left\{ \begin{array}{cc} \big\langle g, A_{n-m} h \big\rangle & \; \text{if} \; n \geq m\\
 & \\
 \big\langle g, \big(A_{m-n}\big)^{\ast} h \big\rangle & \; \text{if} \; n \leq m\end{array}\right.
$$
It follows that $B$ restricted to the subspace
 $\overline{\mbox{span}}\{B^n h: h\in \mathcal{H}, n\in\mathbb{Z}_+\}$ is the minimal isometric dilation of $\{A_{n}\}_{n\geq 0}$. The uniqueness of minimal isometric dilation upto unitary equivalence follows as in the earlier case. For unitary dilations we
replace $\mathbb{Z}_+ $ by  $\mathbb{Z}$ and we have the analogous
result.

\end{proof}

The first question is more delicate. It is obvious that  given an
arbitrary operator-valued sequence $\{A_{n}\}_{n \geq 0},$ there may
not exist $B$ satisfying equation \eqref{Equation:
Momentdilation}.
 Some necessary conditions are easily followed. We
list the following:
\begin{enumerate}
\item The sequence $\{A_n\}_{n\in {\mathbb Z}_+}$ should satisfy the growth bound $\|A_n\|\leq M^n$ for some $M>0.$
\item  For $B$ to be positive (resp. self-adjoint), $A_n$'s
should be positive (resp. self-adjoint). For $B$ to be isometric or
unitary, $A_n$'s should be contractive.
\end{enumerate}
The condition $(1)$ is natural as we are looking for dilations which
are bounded operators. The condition $(2)$ is also obvious.
It is worth to note that the question of existence of positive dilation and unitary/ Isometric dilation is \textcolor{black}{discussed in} \cite{Paulsen 2002} by V. I. Paulsen. Here, we discuss the necessary and sufficient conditions for self-adjoint dilation.

 We now briefly describe the plan of the article. We recall and
summarize some known answers to the first question of  Problem
\ref{main problem}  in Theorem
\ref{thm-self adjoint dilation 2},  and Theorem \ref{Thm: CA class equivalent}.
 To be more precise, in
Theorem \ref{thm-self adjoint dilation 2}, we have the necessary and
sufficient criteria for the self-adjoint
 dilation problem in terms of Hankel matrices. \textcolor{black}{Existence} of unitary/ Isometric dilation is given by V. I. Paulsen \cite{Paulsen 2002}. Consequently, Theorem \ref{thm-self adjoint dilation 2} \textcolor{black}{provides a} complete answer to  \textcolor{black}{question $1$} of Problem \ref{main problem}.
 Moreover, we examine the $\mathcal{C}_{A}$ class of operators as a particular instance of unitary dilation. In our study, we derive a necessary and sufficient condition for an operator-valued sequence to be a member of the $\mathcal{C}_{A}$ class (refer to Theorem \ref{Thm: CA class equivalent}).

Hence, the main focus of this article is the third question of {\it Problem} \ref{main problem}. Here, we try to obtain block operator forms for various classes of dilations. In
Theorem \ref{Theorem: Schaffer self-adjoint}, we show that if
$\{A_{n}\}_{n \geq 0}$ admits self-adjoint dilation (say $B$) then
$B$ has a tri-diagonal form (see Theorem \ref{Thm: tri diagonal})
whose blocks are given by the recursive relation (see Section 2). In Theorem \ref{Theorem: isometry}, we show that if $\{A_{n}\}_{n \geq
0}$ admits isometric dilation (say $V$), then $V$ has the form as in
Equation \eqref{Equation: Schaffer} and the blocks of $V$ are given by
the recursive relation described in Section 3.

 Then we focus on   operator-valued sequences of so-called  $\mathcal{C}_{A}$-class that admit isometric dilations.
 We present several necessary and sufficient conditions for an operator \textcolor{black}{to belong }$\mathcal{C}_{A}$-class
 (see  Theorem \ref{Thm: CA class equivalent} and Theorem \ref{Theorem: C_A class and CP map}).

In the final section, a special  subclass of
$\mathcal{C}_{A}$-class  operators for which we can write down
isometric and unitary dilations explicitly in block operator form
has been studied (see Theorem \ref{thm: explicit isometric
A-dilation}, and Theorem \ref{thm: explicit unitary A-dilation}).
Moreover, we describe their minimal dilation spaces (see Proposition
\ref{prop: explicit minimal dilation space} and Remark \ref{rmk: explicit minimal dilation space}).
The cases where $\rho=1$ and $\rho=2$ are particularly notable, as they correspond to the Sch\"{a}ffer representation for contractions and Ando's representation for operators with a numerical radius no greater than one, respectively.
In the process of investigation, we find \textcolor{black}{a notable} observation in Lemma \ref{basic lemma} \textcolor{black}{namely that every $\mathcal{C}_{\rho}$-class admits} partial isometric dilation. Moreover, we have provided an explicit method for accomplishing this.
\section{Self-adjoint dilations}

\subsection{Classical moment problems}

The moment problem  appeared for the first time in the pioneering
work of Stieltjes \cite{Stieltjes} in 1894. Before that P. L.
Chebyshev \cite{Chebyshev} and Markov \cite{Markov} used the notion
of moments in their work ``limiting value of integrals''. A survey
of these developments can be found in the article Kre\u{i}n
\cite{Krein}. However, Stieltjes was the first mathematician to
consider the moment problem as a  problem in its own right and
formulated it as: For a given sequence $\{m_{n}:\; n \geq 0\}$ of
real numbers, does there exists a  Radon measure $\mu$ supported on
$\mathbb{R}$ such that
\begin{equation}
    m_{n} = \int_{0}^{\infty} x^{n}\; d\sigma(x), ~~\forall\;  n\geq 0 ?
\end{equation}
Later in 1920, Hamburger \cite{Hamburger} extended the scope of the
problem by considering the existence of such measures  supported on
$[0, \infty)$ and Hausdorff \cite{Hausdorff} discussed the
solvability of the problem with existence of such measures supported
on $[0, 1].$ Several authors have made significant contributions in
this direction, for instance  see the work of R. Nevanlinna
\cite{Nevanlinna}, M. Riesz \cite{Riesz}, T. Carleman
\cite{Carleman}, and M.H. Stone \cite{Stone 1932}.

In general, for any closed subset $K$ of $\mathbb{R}$, the $K$- moment problem  \cite{Schmudgen 91} asks for the existence of a measure $\mu$ supported on
the set $K$ such that
\begin{equation}\label{Moment sequence: Integral from}
    m_{n} = \int_{K} x^{n}\; d\mu(x), ~~\forall n\geq 0.
\end{equation}
Three specific choices of $K$ stand out due to their natural importance and for historical reasons, namely the  \emph{Hamburger moment problem} (when $K = [0, \infty)$), the \emph{Stieltjes moment problem}  (when $K = \mathbb{R}$) and the \emph{Hausdorff moment problem} (when $K = [0, 1]$).

We refer the reader to  excellent surveys on the moment problem and
related problems in analysis given by Akhiezer  \cite{Akhiezer 1965}
and Schmudgen \cite{Schmudgen 2017}.  We recall some of the
necessary and sufficient conditions for the existence of solutions
of these moment problems from \cite{Akhiezer 1965, Schmudgen 2017}
(also see references therein).
 Firstly, for each $n \geq 0$, the associated Hankel matrices of the moment sequence $\{m_{n}\}_{n \geq 0}$ are defined by
\begin{align*}
H_{n}&= \begin{bmatrix}
m_{0}&m_{1}& \cdots & m_{n}\\m_{1}&m_{2}&\cdots &m_{n+1}\\ \vdots & \vdots & \ddots & \vdots\\m_{n}&m_{n+1}&\cdots&m_{2n}
\end{bmatrix}_{(n+1) \times (n+1)}\; \text{and}
&\\
H^{(1)}_{n}& =  \begin{bmatrix}
m_{1}&m_{2}& \cdots & m_{n+1}\\m_{2}&m_{3}&\cdots &m_{n+2}\\ \vdots & \vdots & \ddots & \vdots\\m_{n+1}&m_{n+2}&\cdots&m_{2n+1}
\end{bmatrix}_{(n+1) \times (n+1)}.
\end{align*}
The Hamburger moment problem has a solution \cite[Theorem
3.8]{Schmudgen 2017} if and only if the associated Hankel matrices
\begin{align*}
H_{n} \geq 0 \text{~for every~} n \geq 0.
\end{align*}
The Stieltjes moment problem has a solution \cite[Theorem 3.12]{Schmudgen 2017} if and only if the associated Hankel matrices
\begin{align*}
H_{n} \geq 0 \text{~and~} H_{n}^{(1)} \geq 0 \text{~for every~} n \geq 0.
\end{align*}
Furthermore, the Hausdorff moment problem has a solution \cite[Theorem 3.15]{Schmudgen 2017} if and only if the sequence $\{m_{n}:\; n\geq 0\}$ is completely monotonic i.e., $$(-1)^{k} (\Delta^{k}m)_{n} \geq 0 \text{~for every } n,k \geq 0,$$ where
\begin{equation*}
    (\Delta^{k}m)_{n} = \sum\limits_{i=0}^{k}{k\choose i} (-1)^{i} m_{i+n}.
\end{equation*}
Also, see \cite[Theorem 2.6.4]{Akhiezer 1965} for a detailed solution
of the Hausdorff moment problem. One may also consider measures
supported on subsets of the complex plane, for instance, see
\cite{Ahron 75, Schmudgen 2017}.

Coming to dilations of operator sequences, clearly the starting
point is the following celebrated theorem of Sz.-Nagy.
 Let $T\in \mathcal{B(H)}$  and consider the operator valued sequence $\{T^n\}_{n\geq 0}$. This sequence
  admits a minimal isometric or unitary dilation if and only if $T$ is a contraction. Moreover, the minimal dilations are
 unique up to unitary equivalence.  ( See \cite{SF10} or  \cite[Theorem 1.1]{Paulsen
 2002}). We call them dilations of $T$.  In 1955, Sch\"{a}ffer \cite{Schaffer 1955} provided an explicit construction of minimal isometric and unitary dilations
   as follows: Let $D_T=(I-T^*T)^{\frac{1}{2}}, D_{T^*}=(I-TT^*)^{\frac{1}{2}}$ and  $\mathcal{D}_{T}: = \overline{\rm range}D_T,$
   $\mathcal{D}_{T^*}= \overline{\rm range}D_{T^*}$.  Take
    $$\mathcal{K}:= \mathcal{H} \oplus \mathcal{D}_{T} \oplus \mathcal{D}_{T}\oplus\cdots
    .$$ Then
 \[ V=
\begin{bmatrix}
T &0&0&\cdots\\
D_T&0&0&\cdots\\
0&I&0& \cdots\\
0&0&I&\cdots\\
\vdots & \vdots&\ddots&\ddots
\end{bmatrix}
\]
on $\mathcal{K}$ is a minimal isometric dilation of $T$. Take
$\mathcal{L}= \cdots \oplus \mathcal{D}_{T^*}\oplus
\mathcal{D}_{T^*}\oplus \mathcal{H}\oplus \mathcal{D}_T\oplus
\mathcal{D}_T\oplus \cdots $ and define $U$ on $\mathcal{L}$ by
$$U= \left(%
\begin{array}{ccccccc}
    &  &  &  &  &  & \\
  \ddots  &  &  &  &  &  &  \\
   & I &  &  &  &  &  \\
     &  & I &  &  &  &  \\
     &  &  & D_{T^*} & {\bf T} &  &  \\
    &  &  & -T^* & D_T &  &  \\
  &&&&&I&\\
   &  &  &  &  &  & \ddots
\end{array} \right) . $$
The bold font indicates the location of operator $T$ from
$\mathcal{H}$ to $\mathcal{H}$ and $0$ entries are not displayed.
In this article we provide Sch\"{a}ffer type constructions for
several operator valued moment sequences.

\subsection{Self-adjoint dilations}

In 1952, Sz-Nagy obtained the following necessary and sufficient
condition for a sequence of operators to admit self-adjoint dilation
with spectrum contained in a given compact set.
  \begin{thm} \cite{Nagy 1952} \label{Theorem: Nagy1952}
 Let $X\subseteq \mathbb{R}$ be a compact set.  Let $\{A_{n}:\; n\geq 0\}$ be a sequence of
 bounded self-adjoint operators on a Hilbert space $\mathcal{H}$ with $A_{0} = I$. It admits a self-adjoint operator dilation
 $B$ with $\sigma(B)\subseteq X$ if and only if
 \begin{equation}\label{Equation: polynomialcondition}
     c_{0}+c_{1}A_{1} + \cdots + c_{n}A_{n} \geq 0, \;
 \end{equation}
whenever the complex polynomial $ c_{0}+c_{1}x+\cdots+c_{n}x^{n}
\geq 0\;$ for all
     $x \in  X.$
 \end{thm}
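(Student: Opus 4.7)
The approach is the one hinted at in the excerpt: encode the sequence $\{A_n\}$ as a positive linear map out of $C(X)$ and apply Stinespring's dilation theorem, in the spirit of the modern proof of the Sz.-Nagy unitary dilation theorem.

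\emph{Necessity} is a direct application of the continuous functional calculus on $\sigma(B)$. If $B$ is self-adjoint with $\sigma(B)\subseteq X$ and the polynomial $p(x)=c_0+c_1 x+\cdots+c_n x^n$ is nonnegative on $X$, then $p(B)\geq 0$ as an operator on $\mathcal{K}$, so compressing to $\mathcal{H}$ yields $c_0 I+c_1 A_1+\cdots+c_n A_n=P_\mathcal{H}\,p(B)|_{\mathcal{H}}\geq 0$.

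For \emph{sufficiency}, I would first define a linear map $\phi_0$ on complex polynomials by $\phi_0\bigl(\sum_i c_i x^i\bigr)=\sum_i c_i A_i$, with the convention $A_0=I$. Applying the hypothesis to both $p$ and $-p$ shows that if $p|_X=0$ then $\phi_0(p)=0$, so $\phi_0$ descends to a well-defined positive linear map $\phi$ on the algebra $\mathcal{P}(X)$ of polynomial functions on $X$. To show $\phi$ is bounded, observe that for a polynomial $p$ real-valued on $X$ the operator $\phi(p)$ is self-adjoint (since $\phi(\bar p)=\phi(p)^{\ast}$ and $\bar p=p$ on $X$), and the inequalities $\|p\|_\infty\pm p\geq 0$ on $X$ together with positivity give $-\|p\|_\infty I\leq \phi(p)\leq \|p\|_\infty I$, hence $\|\phi(p)\|\leq\|p\|_\infty$; for a general complex polynomial, splitting into real and imaginary parts yields $\|\phi(p)\|\leq 2\|p\|_\infty$. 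Since $\mathcal{P}(X)$ is norm-dense in $C(X)$ by Stone--Weierstrass, $\phi$ extends uniquely to a bounded unital positive map $\phi:C(X)\to\mathcal{B}(\mathcal{H})$ satisfying $\phi(x^n)=A_n$ for every $n\geq 0$.

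Because $C(X)$ is a commutative $C^{\ast}$-algebra, this positive map is automatically completely positive, so Stinespring's theorem (together with unitality $\phi(1)=I$) supplies a Hilbert space $\mathcal{K}\supseteq\mathcal{H}$ and a unital $\ast$-representation $\pi:C(X)\to\mathcal{B}(\mathcal{K})$ with $\phi(f)=P_\mathcal{H}\,\pi(f)|_{\mathcal{H}}$ for every $f\in C(X)$. Setting $B:=\pi(\mathrm{id}_X)$, where $\mathrm{id}_X(x)=x$, produces a self-adjoint operator whose spectrum is contained in $\mathrm{id}_X(X)=X$, and the multiplicativity of $\pi$ yields $P_\mathcal{H} B^n|_{\mathcal{H}}=\phi(x^n)=A_n$, as required. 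The only delicate step I foresee is verifying the well-definedness and boundedness of $\phi$ on $\mathcal{P}(X)$; once these are secured, the Stone--Weierstrass extension and the Stinespring construction carry the rest automatically.
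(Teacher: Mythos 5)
Your proposal is correct and follows essentially the same route as the paper's proof: define $\varphi(x^n)=A_n$ on polynomials, extend by positivity and Stone--Weierstrass to a positive (hence completely positive) map on $C(X)$, and apply Stinespring with $B=\pi(\mathrm{id}_X)$, while necessity follows from the functional calculus. The only difference is that you spell out the well-definedness and boundedness of the map on $\mathcal{P}(X)$, which the paper leaves implicit.
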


 The existence of self-adjoint dilation for an operator sequence is  linked with positivity of the corresponding Hankel matrix. To
 see this we begin with the following observation.
 \begin{lemma} \label{Lemma: positivetype}Let $\{A_{n}\}_{n\geq 0}$ be a
 sequence in $\mathcal{B}(\mathcal{H})$ with $A_{0} = I$. Then $\sum\limits_{i,j = 0}^{n} X_{i}^{\ast} A_{i+j} X_{j} \geq 0$ for any $X_{0}, X_{1}, \cdots , X_{n} \in \mathcal{B}(\mathcal{H}), n \geq 0$ if and only if the
 associated Hankel matrix
 \begin{equation*}
     H_{n}:= \begin{bmatrix} I & A_{1} & A_{2} & \cdots & A_{n}\\ A_{1}&A_{2}&A_{3} &\cdots &A_{n+1}\\\vdots& \vdots& \vdots & \cdots & \vdots\\ A_{n}&A_{n+1} & A_{n+2} & \cdots & A_{2n} \end{bmatrix} \geq 0, \;  \; \text{for all} \;\; n \geq 0.
 \end{equation*}
  \end{lemma}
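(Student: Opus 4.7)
The plan is to recognize that both the Hankel positivity and the operator positivity are the same quadratic form inequality, applied over different test families, and then to show that in this setting the two families generate the same set of admissible vectors.

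First, I would reformulate both conditions as sesquilinear positivity statements. Viewing $H_n$ as an operator on $\mathcal{H}^{n+1}$, the inequality $H_n\geq 0$ is equivalent to
\[
\sum_{i,j=0}^{n}\langle h_i, A_{i+j} h_j\rangle \geq 0 \quad\text{for all } (h_0,\dots,h_n)\in \mathcal{H}^{n+1}.
\]
On the other hand, the operator $S_n:=\sum_{i,j=0}^{n} X_i^{\ast} A_{i+j} X_j$ in $\mathcal{B}(\mathcal{H})$ is positive iff
\[
\langle g, S_n g\rangle = \sum_{i,j=0}^{n}\langle X_i g, A_{i+j} X_j g\rangle\geq 0 \quad\text{for all } g\in \mathcal{H}.
\]
So the two conditions differ only in whether the test vectors in $\mathcal{H}^{n+1}$ range over all tuples, or over the tuples of the form $(X_0 g,\dots,X_n g)$ with $X_i\in\mathcal{B}(\mathcal{H})$ and $g\in\mathcal{H}$.

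The $(\Leftarrow)$ direction is then immediate: if $H_n\geq 0$, then for any choice of $X_0,\dots,X_n$ and any $g\in\mathcal{H}$, set $h_i=X_i g$ and apply the first inequality above. Since this holds for every $g$, we conclude $S_n\geq 0$.

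For the $(\Rightarrow)$ direction I would show that every tuple $(h_0,\dots,h_n)\in\mathcal{H}^{n+1}$ can actually be realized in the form $(X_0 g,\dots,X_n g)$. Fix any unit vector $e\in\mathcal{H}$ and take $g=e$, and define the rank-one operators $X_i\in\mathcal{B}(\mathcal{H})$ by $X_i x=\langle e,x\rangle h_i$; then $X_i e = h_i$. Applying the assumed operator inequality $S_n\geq 0$ to the unit vector $e$ gives $\sum_{i,j}\langle h_i, A_{i+j} h_j\rangle\geq 0$, which is the Hankel positivity of $H_n$. No serious obstacle is expected: the only point to be a bit careful about is the realization step in the second direction, which simply requires one arbitrary rank-one construction, so the argument will be short.
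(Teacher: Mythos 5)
Your proposal is correct and follows essentially the same route as the paper: the easy direction sets $h_i = X_i g$ and uses positivity of $H_n$ on $\mathcal{H}^{n+1}$, while the converse fixes a unit vector and takes $X_i$ to be the rank-one operators $x \mapsto \langle e, x\rangle h_i$ (the paper's $|h_i\rangle\langle g|$) so that the quadratic form for $H_n$ is recovered. No gaps.
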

  \begin{proof}
   Suppose that $H_{n} \geq 0$ for all $n \geq 0$ and let $X_{0}, X_{1}, \cdots, X_{n} \in \mathcal{B}(\mathcal{H})$. For every $g \in \mathcal{H}$, we see that
   \begin{align*}
       \big\langle g,\; \Big(\sum\limits_{i,j = 0}^{n} X_{i}^{\ast} A_{i+j} X_{j}\Big) g \big\rangle
       &= \sum\limits_{i,j = 0}^{n} \langle X_{i}{g},\; A_{i+j} X_{j}{g} \rangle = \langle \widetilde{g}, H_{n} \widetilde{g} \rangle \geq 0,
   \end{align*}
   where $\widetilde{g} = \begin{bmatrix} X_{0}g\\\vdots\\ X_{n}g\end{bmatrix}$. To prove the converse,
 choose and fix  $g \in \mathcal{H}$ with $\|g\| =1.$
Now  for  $h_{0}, h_{1}, \cdots, h_{n} \in \mathcal{H}$, take
$X_i=|h_{i}\rangle \langle g|\; (0\leq i \leq n)$. Then
   \begin{align*}
       \Big\langle \begin{bmatrix} h_{0} \\ \vdots\\ h_{n}\end{bmatrix}, \; H_{n} \begin{bmatrix} h_{0} \\ \vdots\\ h_{n}\end{bmatrix} \Big\rangle
       & = \sum\limits_{i,j = 0}^{n} \langle h_{i},\; A_{i+j} h_{j} \rangle \\
       &= \Big\langle g, \; \Big(\sum\limits_{i,j=0}^{n}|h_{i}\rangle \langle g|^{\ast} A_{i+j} |h_{j}\rangle \langle g| \Big) g \Big\rangle\\
       &= \langle g, \sum _{i,j=0}^nX_i^*A_{i+j}X_jg\rangle \\
       &\geq 0.
   \end{align*}
  \end{proof}
Let $\{A_{n}\}_{n\geq 0}$ be a sequence of self-adjoint operators
acting on a Hilbert space $\mathcal{H}$. Consider the associated
Hankel matrices:
$$ H_{n}:= \begin{bmatrix} I & A_{1} & A_{2} & \cdots & A_{n}\\
A_{1}&A_{2}&A_{3} &\cdots &A_{n+1}\\\vdots& \vdots& \vdots & \cdots
& \vdots\\ A_{n}&A_{n+1} & A_{n+2} & \cdots & A_{2n} \end{bmatrix},$$ 
and 
$$
H^{(2)}_{n} := \begin{bmatrix} A_{2} & A_{3} & A_{4} & \cdots &
A_{n+2}\\ A_{3} & A_{4} & A_{5} & \cdots & A_{n+3}\\ \vdots & \vdots
& \vdots & \cdots & \vdots\\A_{n+2} & A_{n+3}& A_{n+4} & \cdots &
A_{2n+2}
 \end{bmatrix}.$$

In the following theorem we use both the method of completely
positive maps as well as that of positive kernels. Following Theorem
\ref{thm-self adjoint dilation 2} presents a new characterization of
bounded self-adjoint dilations in terms of Hankel matrices.

Now we recall a few definitions and basic results from
the theory of $C^{\ast}$-algebras. Let $\mathcal{A}$ and $
\mathcal{B}$ be unital $C^{\ast}$-algebras. It is easy to see that
$M_{n}(\mathcal{A})$, the collection of all matrices with entries
from $\mathcal{A}$ is a $C^{\ast}$-algebra and so is
$M_{n}(\mathcal{B}).$ A typical element in $M_{n}(\mathcal{A})$ is
denoted by $[a_{ij}]$. A linear map $\varphi \colon \mathcal{A} \to
\mathcal{B}$ is said to be:
\begin{enumerate}
\item {\it positive,} if $\varphi(a) \geq 0$ in $\mathcal{B}$ for every $a \geq 0$ in $\mathcal{A}$\\
\item {\it completely positive,} if for each $n \in \mathbb{N}$ the map $\varphi_{n}\colon M_{n}(\mathcal{A}) \to M_{n}(\mathcal{B})$ given by
\begin{equation*}
\varphi_{n}\Big( \big[ a_{ij}\big]\Big) := \big[ \varphi(a_{ij})\big]
\end{equation*}
is positive, for each $n \in \mathbb{N}.$\\

\item {\it completely bounded,} if $\big\| \varphi\big\|_{cb}:= \sup\big\{\big\|\varphi_{n}\big\|:\; n \in \mathbb{N} \big\} < \infty.$
\end{enumerate}
Note that a positive map need not be completely positive. For example, if we define
\begin{equation*}
\varphi\big(  [a_{ij}]\big)  = [a_{ji}] \;\; \text{for all}\;\; [a_{ij}] \in M_{n}(\mathbb{C})
\end{equation*}
then $\varphi$ is positive but not completely positive. Every completely positive map $\varphi \colon \mathcal{A} \to \mathcal{B}$ is completely bounded with  $\|\varphi\|_{cb} = \|\varphi(1_{\mathcal{A}})\|,$ where $1_{\mathcal{A}}$ is a unit element in $\mathcal{A}$ (see \cite[Proposition 3.6]{Paulsen 2002}) and it follows from Theorem 3.11 of \cite{Paulsen 2002} that if $\mathcal{A}$ is  commutative then $\varphi \colon \mathcal{A} \to \mathcal{B}$ is positive implies $\varphi$ is completely positive.  For a detailed discussion and insight on basic results related to completely positive and completely bounded maps, we refer the reader to \textcolor{black}{the  book} by Vern I. Paulsen \cite{Paulsen 2002}.

\begin{thm}\label{thm-self adjoint dilation 2} Let $\{A_{n}\}_{n\geq 0}$ be a sequence of self-adjoint
 operators with $A_0=I$ and  $\Vert A_{n}\Vert \leq 1$ for all $n$. Then
 it admits a
 self-adjoint contraction dilation  if and only if
 \begin{equation*}
 H_{n}\geq0  \text{ and } H^{(2)}_{n} \leq H_{n} \; \text{ for each }\; n.
\end{equation*}
\end{thm}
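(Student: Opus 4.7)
The plan is to handle the two directions separately. Necessity is a short direct computation from the dilation identity, self-adjointness, and contractivity of $B$. Sufficiency is the substantive step; I would build the dilation by a positive-kernel (GNS-type) construction, which is self-contained and neatly uses the two hypotheses in two separate steps.

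For necessity, assume $B=B^*$ with $\|B\|\le 1$ dilates $\{A_n\}$. Using Lemma~\ref{Lemma: positivetype}, checking $H_n\ge 0$ reduces to showing $\sum_{i,j=0}^n\langle h_i,A_{i+j}h_j\rangle\ge 0$ for arbitrary $h_i\in\mathcal{H}$. Substituting $A_{i+j}=P_\mathcal{H}B^{i+j}|_\mathcal{H}$ and using $B^*=B$, this sum collapses to $\bigl\|\sum_j B^j h_j\bigr\|_\mathcal{K}^2$, which is nonnegative. The analogous calculation with $A_{i+j}-A_{i+j+2}$ gives $\bigl\|\sum_j B^jh_j\bigr\|^2-\bigl\|B\sum_j B^j h_j\bigr\|^2\ge 0$ because $\|B\|\le 1$, i.e.\ $H_n^{(2)}\le H_n$.

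For sufficiency, assume the two Hankel conditions. On $\mathcal{K}_0:=\mathbb{C}[x]\otimes\mathcal{H}$ put the sesquilinear form
\[
\langle x^i\otimes h,\,x^j\otimes k\rangle := \langle h,\,A_{i+j}k\rangle,
\]
extended by sesquilinearity. The condition $H_n\ge 0$ is exactly the statement that this form is positive semidefinite, so one can quotient by null vectors and complete to a Hilbert space $\mathcal{K}$. Multiplication by $x$ induces an operator $B_0$ that is symmetric because the $A_n$ are self-adjoint. The key observation---this is where the second hypothesis enters---is that $\|v\|^2-\|B_0 v\|^2$ is precisely the quadratic form of $H_n-H_n^{(2)}$, so $H_n^{(2)}\le H_n$ means $B_0$ is a contraction. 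Hence $B_0$ descends to the quotient and extends to a self-adjoint contraction $B$ on $\mathcal{K}$. The embedding $h\mapsto [1\otimes h]$ is isometric (using $A_0=I$), and the identity $\langle[1\otimes k],[x^n\otimes h]\rangle=\langle k,A_n h\rangle$ gives $P_\mathcal{H}B^n|_\mathcal{H}=A_n$.

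The main delicacy is conceptual rather than technical: one must see that the two Hankel conditions correspond respectively to the positivity of the inner product and to the contractivity of the shift operator in the GNS construction. A parallel CP-map proof via Theorem~\ref{Theorem: Nagy1952} at $X=[-1,1]$ is also available, using the Markov-Luk\'acs fact that every polynomial $p\ge 0$ on $[-1,1]$ can be written as $\sigma_0+(1-x^2)\sigma_1$ with $\sigma_0,\sigma_1$ sums of Hermitian squares (the odd-degree case reduces to this via $1\pm x=\tfrac12(1\pm x)^2+\tfrac12(1-x^2)$); the two Hankel hypotheses then translate directly into $\varphi(p)\ge 0$.
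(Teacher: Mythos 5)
Your proof is correct. The sufficiency half is essentially the paper's own argument: the paper also runs a GNS-type construction on the positive-definite kernel $k((m,g),(n,h))=\langle g, A_{m+n}h\rangle$ (your $\mathbb{C}[x]\otimes\mathcal{H}$ is the same space in different clothing), uses $H_n\ge 0$ for positivity of the form and $H_n^{(2)}\le H_n$ for contractivity of the shift, exactly as you identify. Where you genuinely diverge is the necessity direction. The paper first invokes the completely positive map $\varphi\colon C[-1,1]\to\mathcal{B}(\mathcal{H})$ furnished by (the proof of) Theorem~\ref{Theorem: Nagy1952} and then exhibits $H_n=\varphi_{n+1}(L^{\ast}L)$ and $H_n-H_n^{(2)}=\varphi_{n+1}(G^{\ast}G)$ for explicit matrices $L$, $G$ over $C[-1,1]$ (the $G$ carrying the $\sqrt{1-x^2}$ factors). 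Your route is more elementary: via Lemma~\ref{Lemma: positivetype} the two Hankel conditions reduce to $\bigl\|\sum_j B^jh_j\bigr\|^2\ge 0$ and $\bigl\|\sum_j B^jh_j\bigr\|^2-\bigl\|B\sum_j B^jh_j\bigr\|^2\ge 0$, needing only $B=B^{\ast}$ and $\|B\|\le 1$ and no functional calculus or Stinespring machinery. Your closing remark that sufficiency could alternatively go through Theorem~\ref{Theorem: Nagy1952} with $X=[-1,1]$ and the Markov--Luk\'acs decomposition $p=\sigma_0+(1-x^2)\sigma_1$ is also sound, and is in effect the CP-map counterpart of the paper's kernel argument; the kernel route has the advantage of producing the dilation space concretely rather than through an abstract Stinespring triple.
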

\begin{proof} Suppose there exists a self-adjoint contraction  dilation $B$ on a Hilbert space $\mathcal{K}\supseteq \mathcal{H}$.
 Since $\sigma(B)\subseteq [-1,1]$, then it follows from the Theorem \ref{Theorem: Nagy1952} that the map $\varphi \colon C[-1,1] \to \mathcal{B(H)}$ defined by
 \begin{equation*}
 \varphi(x^m) = A_{m}\; \text{ for}\; m \geq 0.
 \end{equation*}
 is positive and hence it is completely positive since the domain $C\big( [-1, 1]\big)$ is a commutative $C^{\ast}$-algebra. Now consider the element
$L\in M_{n+1} (C[-1,1])$, where
 \begin{equation*}
    L= \begin{bmatrix} 1 & x & x^2& \cdots & x^{n}\\0&0&0&\cdots &0\\ 0&0&0&\cdots&0\\\vdots&\vdots&\vdots& \cdots & \vdots \\0&0&0&\cdots&0  \end{bmatrix}.
\end{equation*}
Then $L^*L \geq 0$ in $M_{n+1} (C[-1,1])$. Since $H_{n}=
\varphi_{n+1}(L^*L),$ and $\varphi$ is completely positive,
$H_{n}\geq 0.$ We define $G\in M_{n+1} (C[-1,1])$ by

$$ G = \begin{bmatrix} \sqrt{1-x^2} & x\sqrt{1-x^{2}} &  \cdots & x^{n}\sqrt{1-x^{2}}\\0&0&\cdots &0\\ 0&0&\cdots&0\\\vdots&\vdots& \cdots & \vdots \\0&0&\cdots&0  \end{bmatrix}.$$
Then $G^*G \geq 0$ in $M_{n+1} (C[-1,1])$. Since $H_{n}-
H^{(2)}_{n}= \varphi_{n+1}(G^*G),$ and $\varphi$ is completely
positive,
$H_{n} - H^{(2)}_{n}   \geq 0.$

 Conversely, assume that $H_{n}\geq0$ and $H^{(2)}_{n} \leq H_{n}$ for each $n.$
 Let $M: = \mathbb{Z}_{+} \times \mathcal{H}$. Define a map $k \colon M \times M \to \mathbb{C}$ by
\begin{equation}\label{Equation: Hausdorffkernel}
    k((m,g), (n,h)) = \langle g,\; A_{m+n}h\rangle,\; \; \text
{for every}\; m,n \in \mathbb{Z}_{+},\; g,h \in \mathcal{H}.
\end{equation}
Since $H_{n}\geq 0$ for all $n\geq0,$ then by Lemma \ref{Lemma: positivetype}, it follows that $k$ is a positive definite
kernel. Let $V$ be a vector space of all complex functions on $M$
which is zero except for finitely many points of $M$. Since $k$ is
positive definite,  $V$ is a semi-inner product space with respect
to:
\begin{equation*}
    \langle \xi, \eta \rangle: = \sum\limits_{x,y \in M} \overline{\xi(x)} \eta(y) k(x, y)\; \text{for every}\; \xi, \eta \in V.
\end{equation*}
Let us take  $\mathcal{N} = \{\xi\in V:\; \langle \xi, \xi \rangle =
0\}$. Then by Cauchy-Schwarz inequality,  $\mathcal{N} = \{\xi:\;
\langle \xi, \eta \rangle = 0,\; \text{for every}\; \eta \in V\}$ is
subspace of $V$.  Take  $\mathcal{K}$ as the Hilbert space obtained
by the completion of the quotient space $V/\mathcal{N}$. Define
$\lambda\colon M \to \mathcal{K}$ by
\begin{equation*}
    \lambda(m,g) = \delta_{(m,g)}+ \mathcal{N},\; \text{for all}\; (m,g) \in M.
\end{equation*}
Since $ \langle\lambda(0,g), \lambda(0,h)\rangle =  \langle g,
h\rangle $ for every $g, h  \in \mathcal{H}$, we see that
$\mathcal{H}$ can be identified as a subspace of $\mathcal{K}$ via
the map  $g \mapsto \lambda(0,g).$ Moreover, $$\mathcal{K} =
\overline{\rm span} \{\lambda (m,g):\; m\geq 0,\;  g \in
\mathcal{H}\}$$. Define $B (\lambda (m,g)) = \lambda(m+1, g)$ for
every $(m,g) \in M$. Without loss of generality,
$m_{0}=0, m_{1} = 1, \cdots, m_{n} = n$, $g_{i} \in \mathcal{H}$ for
$0\leq i \leq n$ and since $H_{n}^{(2)} \leq H_{n},$ we get
\begin{align*}
\Big\| B\big(\sum\limits_{i=0}^{n}{c_{i}} \lambda(m_{i}, g_{i}) \big) \Big\|^{2} &=\sum\limits_{i,j=0}^{n}\overline{c_{i}}c_{j}\langle \lambda(i+1, g_{i}),\; \lambda(j+1, g_{j}) \rangle\\
&= \sum\limits_{i,j=0}^{n}\overline{c_{i}}c_{j} \langle g_{i}, \; A_{i+j+2} (g_{j}) \rangle \\
&=  \Big\langle \begin{bmatrix} c_{0}g_{0} \\ \vdots\\ c_{n}g_{n}\end{bmatrix},\; H^{(2)}_{n}\begin{bmatrix} c_{0}g_{0} \\ \vdots\\ c_{n}g_{n}\end{bmatrix} \Big\rangle\\
&\leq \Big\langle \begin{bmatrix} c_{0}g_{0} \\ \vdots\\ c_{n}g_{n}\end{bmatrix},\; H_{n}\begin{bmatrix} c_{0}g_{0} \\ \vdots\\ c_{n}g_{n}\end{bmatrix} \Big\rangle\\
& \leq \sum\limits_{i,j=0}^{n}\overline{c_{i}}c_{j} \langle g_{i}, \; A_{{i}+{j}} (g_{j})
\rangle \\
&= \Big\|\sum\limits_{i=0}^{n}{c_{i}} \lambda(m_{i}, g_{i})
\Big\|^{2}.
\end{align*}
This implies that $B$ is a contraction on a dense subspace and it
extends to a linear contraction on $\mathcal{K}$. Denoting the
extension also  by $B$, it is easy to see that $B$ is self-adjoint.
Moreover,  for every $g,h \in \mathcal{H}$,
\begin{align*}
    \langle \lambda (0,g), P_{\mathcal{H}}B^{n}|_{\mathcal{H}}\lambda(0,h) \rangle
     = \langle \lambda(0,g), \; \lambda(n,h) \rangle= \langle g, A_{n} h \rangle.
\end{align*}
Therefore, $B$ is  a self-adjoint contraction dilation of $\{A_n\}_{n\geq 0}.$
\end{proof}
\begin{rmk} \label{Remark: A2biggerthanA1square}
If a sequence $\{A_n:n\geq 0\}$ with $A_0=I$ admits a self-adjoint dilation, then by Theorem \ref{thm-self adjoint dilation 2}, it follows that the $2\times 2$ Hankel matrix
\begin{equation*}
H_{2} = \begin{bmatrix}
I & A_{1}\\
A_{1} & A_{2}
\end{bmatrix} \geq 0.
\end{equation*}
Equivalently, $A_{2} \geq A_{1}^{2}.$  Alternatively, if a
self-adjoint operator $B$ on some Hilbert space $\mathcal{K}
\supseteq \mathcal{H}$, is a dilation,
\begin{equation}\label{Equation: A2-A1square}
A_{2} - A_{1}^{2} = P_{\mathcal{H}}B^{2}P_{\mathcal{H}} - P_{\mathcal{H}}BP_{\mathcal{H}}BP_{\mathcal{H}} = P_{\mathcal{H}}B (I - P_{\mathcal{H}}) BP_{\mathcal{H}} \geq 0.
\end{equation}
\end{rmk}
Let  $B$ be a bounded self-adjoint operator on some Hilbert space $\mathcal{K} \supseteq \mathcal{H}$ such that $A_{n} = P_{\mathcal{H}}B^{n}P_{\mathcal{H}}$ for every $n \geq 0.$  If we assume $A_2=A_1^2$ then from Equation \eqref{Equation: A2-A1square} we get $(I-P_{\mathcal{H}})BP_{\mathcal{H}} = 0.$ It means that the subspace $\mathcal{H}$ of $\mathcal{K}$ is reducing under $B.$ It follows that $A_{1} = B\big|_{\mathcal{H}}$ and so,
\begin{equation*}
A_{n} =  B^{n}\big|_{\mathcal{H}} = \big(B\big|_{\mathcal{H}}\big)^{n} = A_{1}^{n},\; \text{for}\; n \geq 1.
\end{equation*}

In fact, much stronger results are known now (see \cite{PS2021} for
more details). In the work of K. Schm\"{u}dgen \cite{Schmudgen 2017,
Schmudgen 91} \textcolor{black}{one finds a necessary} and sufficient conditions for
the existence of solution of $K$-moment problem where $K$ is a
compact basic semi-algebraic set (i.e., the solution set of a finite
system of polynomial inequalities $p_{1}(x)\geq 0, p_{2}(x) \geq 0,
\cdots, p_{n}(x)\geq 0$.  F.-H. Vasilescu \cite{Vasilescu 1998}
builds on Schm\"{u}dgen's results and gives an explicit solution of
$K$-moment problem in the case of semi-algebraic sets. For a
detailed discussion of these topics one can see the survey article
\cite{Vasilescu 03}.

\subsection{Concrete self-adjoint dilations}
Now we turn our discussion to a concrete construction of
self-adjoint dilation. Before that, let us recall some known facts
from the literature. Let us recall some known
facts from the literature connecting self-adjoint operators and
Jacobi/tri-diagonal matrices. Suppose  $B$ is a bounded self-adjoint
operator defined on a separable Hilbert space admitting a unit
cyclic vector $v$. It can be represented by a tri-diagonal matrix
with respect to the basis obtained by Gram-Schmidt process from the
set $\{v, Bv, B^{2}v, \cdots\}$. Suppose the matrix $B$ is expressed
as,
\begin{equation}\label{tri}
B =
\begin{bmatrix}
  a_{0} & \overline{b_{0}} & 0 & 0& \cdots  \\
   {b_{0}} & a_{1} & \overline{b_{1}} &0& \hdots \\
    0      & {b_{1}} & a_{2} &\overline{b_{2}} & \hdots  \\
    0      & 0      & {b_{2}} & a_{3}& \ddots \\
    \vdots & \vdots & \ddots & \ddots & \ddots \\
\end{bmatrix}.\end{equation}
Then it is related to a family of monic orthogonal polynomials given
by $p_0(x)=1, p_1(x)=(x-a_0)$,
\begin{equation}\label{Equation: orthogonal polynomial}
    p_{n}(x) = (x-a_{n-1})p_{n-1}(x) - |b_{n-1}|^{2}p_{n-2}(x), \; \text{for all}\; n\geq 2
\end{equation}
Note that here equation \eqref{Equation: orthogonal polynomial} is
obtained by expanding the determinant of $(xI-B_{n})$ using the last
row, where $B_{n}$ is the $n\times n$ truncated matrix of $B$. This
is being cited  from \cite[Lemma 3.2]{BP}. It is to be noted that
the cyclicity of $v$, ensures that $b_j\neq 0$ for every $j$.
 By replacing the basis vectors by the \textcolor{black}{same vectors}  multiplied by   suitable phase-factors one may actually take $b_j>0$ for every $j$.
 The recurrence
relations require minor modifications if we consider orthonormal
polynomials instead of orthogonal polynomials. Exact formulae and
other basic information about this theory can be found in the
classic book \cite{Stone 1932}.  Some applications of this idea to
quantum theory with worked out examples can be seen in \cite{BP}.

\textcolor{black}{Let $\mu(\cdot) = \|E(\cdot)v\|^{2}$ be the probability measure
defined on the Borel $\sigma-$field of the real line, where $E$ is
the spectral measure associated with $B$. Clearly, it is supported on
the spectrum of $B$. Then $\{p_{n}(x):\; n \geq 0\}$ is an
orthonormal basis of $L^{2}(\mu)$ (for more details, see \cite{Stone
1932}). Moreover, $m$-th moment of $\mu$ is given by
\begin{align*}
 \langle v,\; B^{m}v \rangle = \int\limits_{-\infty}^{\infty} \lambda^{m}\; d\langle v,\; E_{B}(\lambda)v \rangle,\; \text{for all}\; m \geq 0.
\end{align*}
}
Conversely, given any compactly supported probability measure $\mu $
on the real line we can take $B$ as the operator `multiplication by
$x$', on $L^2(\mu )$ and \textcolor{black}{the cyclic vector} $v$ as the constant function
$1$. We can observe the tri-diagonal form of $B$ on the \textcolor{black}{basis of}
normalized orthogonal polynomials. The coefficients $\{a_n,
b_n:n\geq 0\}$ are known as Jacobi parameters of the measure $\mu .$
Here $B$ is a self-adjoint dilation of the moment sequence of the
probability measure.

 This motivates us to construct such tri-diagonal
operator matrix $B$ for a self-adjoint dilation of an operator
sequence $\{A_{n}:\; n \geq 0\}.$ Such tri-diagonal blocks, known as
generalized Jacobi tri-diagonal relations, often appear in quantum
theory (See \cite{AL} for details).

\begin{lemma}\label{Upper Hessenberg}
Let $B$ be a bounded operator on  some Hilbert space ${\mathcal K}$.
Let ${\mathcal H}$ be a closed subspace of ${\mathcal K}.$ Assume
\begin{equation}\label{minimality}
{\mathcal K}= \overline{\mbox span}\{ B^{n} h: h\in {\mathcal H},
n\in {\mathbb Z}_+\}.\end{equation} Then ${\mathcal K}$ decomposes
as a direct sum of Hilbert spaces
\begin{equation*}
{\mathcal K}={\mathcal H}_0\oplus {\mathcal H}_1\oplus {\mathcal H}_2\oplus \cdots
\end{equation*}
where ${\mathcal H}_0={\mathcal H}$ and with respect to this
decomposition the operator $B$ has the `upper Hessenberg' form:
 $$B= \begin{bmatrix}\label{upper Hessian}
    B_{00} & B_{01} & B_{02} & B_{03} & \cdots  \\
    B_{10} & B_{11} & B_{12} & B_{13} & \cdots \\
    0      & B_{21} & B_{22} & B_{23}& \cdots  \\
    0      & 0 & B_{32} & B_{33} & \cdots \\
    \vdots & \vdots & \vdots & \ddots & \ddots \\
    \end{bmatrix} $$
where $\mathcal {H}_n =\overline{B_{n (n-1)}(\mathcal{H}_{n-1})}$
for every $n$. Conversely, any such $B$ as above satisfies Equation \eqref{minimality}.
\end{lemma}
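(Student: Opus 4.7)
The plan is to define the Hilbert space summands as the successive orthogonal increments of the iterated cyclic subspaces generated by $B$ acting on $\mathcal{H}$. Set $\mathcal{M}_n := \overline{\rm span}\{B^k h : h \in \mathcal{H},\ 0 \le k \le n\}$, so that $\mathcal{M}_0 = \mathcal{H} \subseteq \mathcal{M}_1 \subseteq \mathcal{M}_2 \subseteq \cdots$ is an increasing chain of closed subspaces of $\mathcal{K}$, and put $\mathcal{H}_0 := \mathcal{H}$ and $\mathcal{H}_n := \mathcal{M}_n \ominus \mathcal{M}_{n-1}$ for $n \ge 1$. Equation~\eqref{minimality} translates directly into $\mathcal{K} = \overline{\bigcup_n \mathcal{M}_n}$, from which $\mathcal{K} = \bigoplus_{n \ge 0} \mathcal{H}_n$ is immediate. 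The upper Hessenberg shape is then a one-line consequence of $B\mathcal{M}_n \subseteq \mathcal{M}_{n+1}$: restricting to $\mathcal{H}_n \subseteq \mathcal{M}_n$ gives $B\mathcal{H}_n \subseteq \mathcal{M}_{n+1} = \mathcal{H}_0 \oplus \cdots \oplus \mathcal{H}_{n+1}$, so if $B_{ij} : \mathcal{H}_j \to \mathcal{H}_i$ denotes the compression, then $B_{ij} = 0$ whenever $i > j+1$.

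To identify $\mathcal{H}_n = \overline{B_{n,n-1}(\mathcal{H}_{n-1})}$ I would use the equality $\mathcal{M}_n = \overline{\mathcal{M}_{n-1} + B\mathcal{M}_{n-1}}$, which holds because every spanning vector $B^k h$ with $1 \le k \le n$ can be written as $B(B^{k-1}h) \in B\mathcal{M}_{n-1}$. Decomposing any $m \in \mathcal{M}_{n-1}$ along $\mathcal{H}_0 \oplus \cdots \oplus \mathcal{H}_{n-1}$ as $m = \sum_{j=0}^{n-1} m_j$ and using the Hessenberg property just proved, each $Bm_j$ with $j \le n-2$ already lies in $\mathcal{M}_{n-1}$, so the component of $Bm$ in $\mathcal{H}_n$ is exactly $B_{n,n-1}(m_{n-1})$. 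Projecting an arbitrary $y \in \mathcal{H}_n \subseteq \mathcal{M}_n$ onto $\mathcal{H}_n$ and approximating by elements of $\mathcal{M}_{n-1} + B\mathcal{M}_{n-1}$ then yields $y \in \overline{B_{n,n-1}(\mathcal{H}_{n-1})}$; the reverse inclusion is by definition of $B_{n,n-1}$.

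For the converse, given $B$ in the displayed Hessenberg form with $\mathcal{H}_n = \overline{B_{n,n-1}(\mathcal{H}_{n-1})}$, I would prove $\mathcal{M}_N = \mathcal{H}_0 \oplus \cdots \oplus \mathcal{H}_N$ by induction on $N$: the Hessenberg property gives the inclusion $\mathcal{M}_N \subseteq \mathcal{H}_0 \oplus \cdots \oplus \mathcal{H}_N$, while for the opposite inclusion, subtracting from $B\mathcal{H}_{N-1} \subseteq \mathcal{M}_N$ its components that lie in $\mathcal{M}_{N-1} \subseteq \mathcal{M}_N$ leaves $B_{N,N-1}(\mathcal{H}_{N-1}) \subseteq \mathcal{M}_N$, whose closure is $\mathcal{H}_N$. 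Taking $N \to \infty$ recovers Equation~\eqref{minimality}. The only delicate point I anticipate is the interchange of projection and closure in the identification step, which I would handle via continuity of the orthogonal projection $P_{\mathcal{H}_n}$; everything else is essentially bookkeeping.
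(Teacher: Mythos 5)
Your proposal is correct and follows essentially the same route as the paper: both define the increasing chain of cyclic subspaces generated by $\mathcal{H}$ under $B$ (your $\mathcal{M}_n$ is the paper's $\mathcal{H}_{(n+1)]}$, up to an index shift), take orthogonal increments to get the $\mathcal{H}_n$, and derive the Hessenberg form from $B\mathcal{M}_n \subseteq \mathcal{M}_{n+1}$. The only difference is that you spell out the verification of $\mathcal{H}_n = \overline{B_{n(n-1)}(\mathcal{H}_{n-1})}$ and the converse, which the paper dismisses as "easy to see"; your arguments for both are sound.
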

\begin{proof} Take $${\mathcal H}_{n]}=\overline{\rm span} \{ B^mh:\; {0 \leq m \leq n}, h\in
{\mathcal H}\}$$ and
$$ \mathcal{H}_{0}:= \mathcal{H},\; \mathcal{H}_{1}:= \mathcal{H}_{1]} \bigcap \mathcal{H}^{\bot}_{0},\; {\mathcal H}_n= {\mathcal H}_{n]}\bigcap
{{\mathcal H}_{(n-1)]}^{\perp }}\text{ for } n\geq 2.$$
Then clearly
${\mathcal K}= \oplus_{n\geq
1} {\mathcal H}_n,$ and as $B ({\mathcal H}_{n]})\subseteq {\mathcal
H}_{(n+1)]}$, $B({\mathcal H}_n)\subseteq \oplus
_{m=0}^{n+1}{\mathcal H}_{m+1}$. Consequently, the operator $B$ has
the form described above. The range and condition and the converse
statements are easy to see.
\end{proof}

\begin{thm}\label{Thm: tri diagonal}
Let $\{A_{n}\}_{n\geq 0}$ be a sequence of self-adjoint operators in
$\mathcal{B}(\mathcal{H})$ with $A_0=I$, admitting a minimal
self-adjoint dilation $B$ in $\mathcal{B}(\mathcal{K})$ for some
Hilbert space $\mathcal{K}$. Then the space
$\mathcal{K}=\mathcal{H}_0\oplus \mathcal{H}_1\oplus \cdots$ (
$\mathcal{H}_0=\mathcal{H})$,  so that the operator $B$ has the
tri-diagonal form:
\begin{equation} \label{Equation: tri diagonal}
    B= \begin{bmatrix}
    B_{00} & B_{01}^{\ast} & 0 & 0 & \cdots  \\
    B_{10} & B_{11} & B_{21}^{\ast} & 0 & \cdots \\
    0      & B_{21} & B_{22} & B_{32}^{\ast}& \cdots  \\
    0      & 0      & B_{32} & B_{33} & \cdots \\
    \vdots & \vdots & \vdots & \ddots & \ddots \\
    \end{bmatrix},
\end{equation}
and $\mathcal{H}_n=\overline{B_{n(n-1)}(\mathcal{H}_{n-1})}$ for all
$n\geq 1.$
\end{thm}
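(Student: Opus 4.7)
The plan is to invoke Lemma \ref{Upper Hessenberg} and then apply $B = B^*$ block by block to kill every entry above the super-diagonal. Since $B$ is a \emph{minimal} self-adjoint dilation, the minimality equation \eqref{Equation: Minimality} is precisely the hypothesis of Lemma \ref{Upper Hessenberg}. Applying it produces an orthogonal decomposition $\mathcal{K} = \mathcal{H}_0 \oplus \mathcal{H}_1 \oplus \mathcal{H}_2 \oplus \cdots$ with $\mathcal{H}_0 = \mathcal{H}$ and $\mathcal{H}_n = \overline{B_{n(n-1)}(\mathcal{H}_{n-1})}$, together with the upper Hessenberg block representation of $B$, so that $B_{ij} = 0$ whenever $i \geq j+2$.

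Next I would bring in self-adjointness. The operator equation $B = B^*$ is equivalent, block by block, to $B_{ij} = B_{ji}^*$ for every pair of indices $i, j$. For indices with $j \geq i+2$ the block $B_{ji}$ already vanishes by the Hessenberg conclusion, so $B_{ij} = B_{ji}^* = 0$ as well. Hence $B_{ij} = 0$ whenever $|i-j| \geq 2$, and only the diagonal blocks $B_{nn}$, the sub-diagonal $B_{n(n-1)}$, and the super-diagonal $B_{(n-1)n}$ survive. Self-adjointness further forces $B_{(n-1)n} = B_{n(n-1)}^*$, which lines up with the labels $B_{01}^*, B_{21}^*, B_{32}^*, \ldots$ appearing in \eqref{Equation: tri diagonal}, and the identification $\mathcal{H}_n = \overline{B_{n(n-1)}(\mathcal{H}_{n-1})}$ is inherited directly from Lemma \ref{Upper Hessenberg}.

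There is essentially no obstacle: the structural content -- producing the block decomposition and Hessenberg form out of minimality of the dilation -- has already been packaged into Lemma \ref{Upper Hessenberg}, so the present theorem reduces to the one-line block-adjoint observation in the second paragraph.
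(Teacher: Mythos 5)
Your proposal is correct and follows exactly the paper's own argument: invoke Lemma \ref{Upper Hessenberg} to obtain the decomposition and the upper Hessenberg form from minimality, then use $B_{ij}=B_{ji}^{\ast}=0$ for $j\geq i+2$ to reduce to tridiagonal form. Nothing further is needed.
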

\begin{proof}
From the previous lemma, $B$ has upper Hessenberg form. But since
$B$ is self-adjoint, $B_{ij}=B_{ji}^*=0$ for $j>(i+1).$

\end{proof}

Consider the setting up of this theorem.  We  try to determine the
blocks using the sequence $\{A_n\}_{n\geq 0}$ and the dilation
property. It is done recursively, and to do this we need to invert
some operators. In general, it is quite possible that some of these
operators are not invertible, and we may have to modify the
construction. For the moment, we assume invertibility of concerned
operators as and when required.

 Since $B$ is a self-adjoint dilation for the moment sequence
$\{A_{n}\}_{n \geq 0}$ we have $(B^{n})_{00} = A_{n}$ for every $n
\geq 0$. Clearly, $B_{00} = A_{1}$ and $B_{10}$ is obtained as,
\begin{align*}
    A_{2} = (B^{2})_{00}= B_{00}^{2}+B_{10}^{\ast}B_{10} = A_{1}^{2}+ B_{10}^{\ast}B_{10}.
\end{align*}
That is, $ B_{10}^{\ast}B_{10} = A_{2}-A_{1}^{2}$. Since
$(A_{2}-A_{1}^{2}) \geq 0$ (see Remark \ref{Remark:
A2biggerthanA1square}), we can choose $B_{10} =
(A_{2}-A_{1}^{2})^{1/2}.$ Firstly, we explain the process to compute
the diagonal block $B_{11}$ and then establish the recurrence
relation to obtain the diagonal and lower diagonal blocks. From the
Equation \eqref{Equation: tri diagonal}, we have
\begin{align*}
    A_{3} = (B^{3})_{00} = \sum\limits_{0\leq r_{1},r_{2}\leq 2} B_{0r_{1}}B_{r_{1}r_{2}}B_{r_{2}0} = \sum\limits_{0\leq r_{1},r_{2}\leq 1} B_{0r_{1}}B_{r_{1}r_{2}}B_{r_{2}0},
\end{align*}
since $B_{0r_{1}}B_{r_{1}r_{2}}B_{r_{2}0} = 0$ when either $r_{1}$
or $r_{2}$ is 2$.$ This implies that
\begin{align*}
A_{3} &= B_{00}^{3}+B_{00}B_{01}B_{10}+B_{01}B_{10}B_{00}+B_{01}B_{11}B_{10}\\
&= B_{00}^{3}+B_{00}B^{\ast}_{10}B_{10}+B^{\ast}_{10}B_{10}B_{00}+B^{\ast}_{10}B_{11}B_{10}.\\
\end{align*}
By substituting the first column information we get
\begin{equation*}
    B_{11} = (A_{2}-A_{1}^{2})^{-1/2} \Big[ (A_{3}-A_{1}^{3}) - A_{1}(A_{2}-A_{1}^{2}) - (A_{2}-A_{1}^{2})A_{1}\Big] (A_{2}-A_{1}^{2})^{-1/2}.
\end{equation*}
The formulae for diagonal and off-diagonal blocks are as follows.
Firstly, note that each diagonal block is a self-adjoint operator
and is computed by the compression of odd powers of $B$. Suppose
that  $(n-1)$ columns of $B$ are known then $B_{nn}$ is computed as
follows:
\begin{eqnarray*}
      A_{2n-1}
     &= & (B^{2n-1})_{00}\\
&= & \sum\limits_{0\leq r_{1}, r_{2}, \hdots, r_{2(n-1)} \leq 2n-2}B_{0r_{1}} B_{r_{1}r_{2}}\cdots B_{r_{2(n-1)}0}\\
&    = & \sum\limits_{\underset{ \& \; (r_{n-1}, r_{n}) \neq (n,n)}{0\leq r_{1}, r_{2}, \hdots, r_{2(n-1)}\leq 2n-2}} B_{0r_{1}} B_{r_{1}r_{2}} \cdots B_{r_{2(n-1)}0} \\
& & +\;  B_{01} \cdots B_{(n-1)n}B_{nn}B_{n(n-1)} \cdots B_{10}\\
 &   = &\sum\limits_{\underset{ \& \; (r_{n-1}, r_{n}) \neq (n,n)}{0\leq r_{1}, r_{2}, \hdots, r_{2(n-1)}\leq 2n-2}} B_{0r_{1}} B_{r_{1}r_{2}} \cdots B_{r_{2(n-1)}0} \\
 & & +\;  B_{10}^{\ast} \cdots B^{\ast}_{n(n-1)}B_{nn}B_{n(n-1)} \cdots B_{10}.
\end{eqnarray*}
This implies that
\begin{eqnarray*}
    B_{nn} = \Big( \prod\limits_{i=1}^{n}B^{\ast}_{i(i-1)}\Big)^{-1} E_{n} {\Big( \prod\limits_{i=1}^{n}B^{\ast}_{i(i-1)}\Big)^{\ast}}^{-1},
\end{eqnarray*}
where
\begin{equation*}
E_{n} = \Big[A_{2n-1} \; - \sum\limits_{\underset{ \& \; (r_{n-1}, r_{n}) \neq (n,n)}{1\leq r_{1}, r_{2}, \hdots, r_{2(n-1)}\leq 2n-2}} B_{1r_{1}} B_{r_{1}r_{2}} \cdots B_{r_{2(n-1)}1} \Big].
\end{equation*}

Now notice that the lower diagonal block in the first column is
given by $B_{21} = (A_{2}-A_{1}^{2})^{1/2}$. These lower diagonal
blocks can be obtained by the compression of even powers of $B.$
Suppose that  $(n-1)$ columns of $B$ are known then $B_{(n+1)n}$ is
obtained as below:
\begin{eqnarray*}
     A_{2n}&=&(B^{2n})_{00}\\
    &= &\sum\limits_{0\leq r_{1}, r_{2}, \hdots, r_{2n-1} \leq 2n-1}\!\!\!\!\! \!\!\!\!\! B_{1r_{1}} B_{r_{1}r_{2}}\cdots B_{r_{2n-1}0}\\
    &=& \sum\limits_{\underset{ \& \;  r_{n} \neq n}{0\leq r_{1}, r_{2}, \hdots, r_{2n-1}\leq 2n-1}} \!\!\!\!\! \!\!\!\!\! B_{0r_{1}} B_{r_{1}r_{2}} \cdots B_{r_{2n-1}0} \\
    & & +\; B_{01} \cdots B_{(n-2)(n-1)}B_{(n-1)n}B_{n(n-1)}B_{(n-1)(n-2)} \cdots B_{10}\\
    &=&\sum\limits_{\underset{ \& \; r_{n} \neq n}{0\leq r_{1}, r_{2}, \hdots, r_{2n-1}\leq 2n-1}} \!\!\!\!\!  \!\!\!\!\! B_{0r_{1}} B_{r_{1}r_{2}} \cdots B_{r_{2n-1}0} \\
    & & +\;  B_{10}^{\ast} \cdots B^{\ast}_{(n-1)(n-2)}B^{\ast}_{n(n-1)}B_{n(n-1)}B_{(n-1)(n-2)} \cdots B_{10}.
\end{eqnarray*}
This implies that
\begin{align*}
    & B^{\ast}_{n(n-1)}B_{n(n-1)}\\
    & = \Big( \prod\limits_{i=1}^{n-1}B^{\ast}_{i(i-1)}\Big)^{-1} \Big[A_{2n} \; - \sum\limits_{\underset{ \& \;
     r_{n}\neq n}{0\leq r_{1}, r_{2}, \hdots, r_{2n-1}\leq 2n-1}} \!\!\!\!\! \!\!\!\!\!  B_{0r_{1}} B_{r_{1}r_{2}} \cdots B_{r_{2n-1}0} \Big] \\
     & \;\;\;\;\;\;\;\;\;\;\;\;\;\;\;\;\;\;\;\; \;\;\;\;\;\;\;\;\;\;\;\;\;\;\;\;\;\;\;\; \;\;\;\;\;\;\;\;\;\;\;\;\;\;\;\;\;\;\;\;\;\;\;\;\;\;\;\;\;\;\;\;\;\;\;\;\;\;\;\;{\Big( \prod\limits_{i=1}^{n-1}B^{\ast}_{i(i-1)}\Big)^{\ast}}^{-1}.
\end{align*}
In this case, we can choose
\begin{equation*}
   B_{n(n-1)} = \Big\{\Big( \prod\limits_{i=1}^{n-1}B^{\ast}_{i(i-1)}\Big)^{-1} F_{n} {\Big( \prod\limits_{i=1}^{n-1}B^{\ast}_{i(i-1)}\Big)^{\ast}}^{-1}\Big\}^{1/2},
\end{equation*}
where
\begin{equation*}
F_{n} = \Big[A_{2n} \; - \sum\limits_{\underset{ \& \; r_{n} \neq n}{0\leq r_{1},
    r_{2}, \hdots, r_{2n-1}\leq 2n-1}}\!\!\!\!\! \!\!\!\!\! B_{0r_{1}} B_{r_{1}r_{2}} \cdots B_{r_{2n-1}0} \Big].
\end{equation*}
With these computations  and Lemma \ref{Upper Hessenberg}, we get
the following result.

\begin{thm}\label{Theorem: Schaffer self-adjoint}
Let $\{A_{n}\}_{n\geq 0}$ be a sequence of self-adjoint operators in
$\mathcal{B}(\mathcal{H})$ with $A_0=I$, admitting a self-adjoint
dilation. If the inverses appearing in the  recurrence relations
above are well-defined bounded operators, then these formulae
provide a self-adjoint minimal dilation with  block tri-diagonal form
as above.
\end{thm}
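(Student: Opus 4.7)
The plan is to leverage the hypothesized existence of a self-adjoint dilation together with the uniqueness of the minimal one, rather than constructing the operator from scratch. Concretely: by hypothesis, $\{A_n\}_{n\geq 0}$ admits a self-adjoint dilation, so the uniqueness theorem of Section~1 yields a minimal self-adjoint dilation $B$ on some $\mathcal{K}\supseteq\mathcal{H}$, unique up to unitary equivalence. Applying Theorem \ref{Thm: tri diagonal} supplies the decomposition $\mathcal{K}=\mathcal{H}_0\oplus\mathcal{H}_1\oplus\cdots$ with $\mathcal{H}_0=\mathcal{H}$ and $\mathcal{H}_n=\overline{B_{n(n-1)}(\mathcal{H}_{n-1})}$, together with the tridiagonal form \eqref{Equation: tri diagonal}. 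It then remains only to verify that the blocks of this intrinsic $B$ coincide with those produced by the recurrence relations derived before the statement.

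The verification is an induction on $n$. The base case is immediate: $A_1=(B)_{00}=B_{00}$ and $A_2=(B^2)_{00}=B_{00}^2+B_{10}^\ast B_{10}$, yielding $B_{00}=A_1$ and $B_{10}^\ast B_{10}=A_2-A_1^2\geq 0$ by Remark \ref{Remark: A2biggerthanA1square}, so $B_{10}$ may be taken as the positive square root. For the inductive step, assuming the first $n$ columns of $B$ are known and agree with the recurrence, the expansion of $A_{2n-1}=(B^{2n-1})_{00}$ through the tridiagonal structure isolates exactly one term containing the unknown diagonal block $B_{nn}$, while the expansion of $A_{2n}=(B^{2n})_{00}$ isolates exactly one term containing $B_{n(n-1)}^\ast B_{n(n-1)}$. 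The invertibility hypothesis on the relevant products of off-diagonal blocks then lets us solve these two identities to obtain precisely the displayed closed-form expressions for $B_{nn}$ and for $B_{n(n-1)}^\ast B_{n(n-1)}$.

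The main subtlety, and the step I expect to be the principal obstacle, is the choice of each lower diagonal block $B_{n(n-1)}$ itself. Only the quantity $B_{n(n-1)}^\ast B_{n(n-1)}$ is intrinsically determined by $\{A_n\}$; the block is pinned down only up to a left partial isometry factor. This ambiguity corresponds exactly to the unitary freedom in identifying $\mathcal{H}_n$ with the closure of the range of $B_{n(n-1)}$, so defining $B_{n(n-1)}$ as the positive square root gives a canonical representative that is compatible with $\mathcal{H}_n=\overline{B_{n(n-1)}(\mathcal{H}_{n-1})}$ and keeps self-adjointness manifest via $B_{(n-1)n}=B_{n(n-1)}^\ast$. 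Boundedness of the resulting operator is automatic, since it coincides (up to unitary equivalence) with the bounded dilation guaranteed by hypothesis. Finally, minimality of the constructed dilation is a direct consequence of the converse assertion in Lemma \ref{Upper Hessenberg}, applied to the tridiagonal form whose off-diagonals have dense ranges in the successive summands $\mathcal{H}_n$ at every level.
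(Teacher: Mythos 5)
Your proposal is correct and follows essentially the same route as the paper: start from the minimal self-adjoint dilation, invoke Theorem \ref{Thm: tri diagonal} for the tridiagonal form, and recursively solve $(B^{n})_{00}=A_{n}$ for the blocks under the stated invertibility hypotheses. Your explicit treatment of the partial-isometry freedom in choosing $B_{n(n-1)}$ (resolved by taking the positive square root and absorbing the ambiguity into the identification of $\mathcal{H}_n$) and your appeal to the converse of Lemma \ref{Upper Hessenberg} for minimality are welcome refinements of points the paper passes over with ``we can choose,'' but they do not constitute a different argument.
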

\begin{eg} Let $T\in \mathcal{B(H)}$ be a self-adjoint operator. We define a moment sequence $\{A_{n}\}_{n\geq 0}$  by
\begin{align*}
A_{n} = &\left\{
        \begin{array}{cc}
            2^{\frac{n-2}{2}}T^{n}   &\text{~if~} n \text{ is even}\\
            &                    \\
            0     &\text{~if~} n \text{ is odd}~~~~~~.   \\
        \end{array} \right.\\
    \end{align*}
Then  $V$ acting on $\mathcal{H}\oplus\mathcal{H}\oplus\mathcal{H}$,
\begin{align*}
V=
\begin{bmatrix}
0                &{T}     &0\\
T &0                     &T\\
0               & T     &0
\end{bmatrix}
\end{align*}
is a self-adjoint dilation of $\{A_n:n\geq 0\}.$
\end{eg}

This example can be generalized as follows. Suppose $\{m_n: n\geq
0\}$ is a moment sequence of a compactly supported probability
measure $\mu $ on $\mathbb{R}.$ Let $T$ be a bounded self-adjoint
operator on a Hilbert space $\mathcal{H}$. Then $\{m_nT^n: n\geq
0\}$ is an operator moment sequence that admits self-adjoint dilation.
Take the dilation space $\mathcal{K}=\mathcal{H}\otimes L^2(\mu )$
with $\mathcal{H}$ identified as a subspace by identifying $h\in
\mathcal{H}$ with $h\otimes 1$, where $1$ is the constant function 1
in $L^2(\mu ).$ Let $B$ be the tri-diagonal form of multiplication
by `$x$' operator on $L^2(\mu )$ with cyclic vector $1$ as in
Equation \eqref{tri}. Then  $T\otimes B$ is a self-adjoint dilation of
$\{m_nT^n: n\geq 0\}$:
\[T\otimes B = \begin{bmatrix}
  a_{0}T & b_{0}T & 0 & 0& \cdots  \\
   {b_{0}}T & a_{1}T & b_{1}T &0& \hdots \\
    0      & {b_{1}}T & a_{2}T &b_{2}T & \hdots  \\
    0      & 0      & {b_{2}}T & a_{3}T& \ddots \\
    \vdots & \vdots & \ddots & \ddots & \ddots \\
\end{bmatrix}.\]

An operator $T\in \mathcal{B(H)}$ is called \emph{quasinormal} if
$T(T^*T)= (T^*T)T.$ We  recall that $T$ is \emph{quasinormal} if and only
if $(T^*T)^k= T^{*k}T^k$ for all $k\in \mathbb{Z}_{+}$
\cite{PS2021}.
\begin{eg} Let $T\in \mathcal{B(H)}$ be a quasinormal operator. Then the moment sequence $\{A_{n}\}_{n\geq 0}$ defined by
\begin{align*}
A_{n} = &\left\{
        \begin{array}{cc}
            2^{\frac{n-2}{2}}T^{*\frac{n}{2}}T^{\frac{n}{2}}   &\text{~if~} n \text{ is even}\\
            &                    \\
            0     &\text{~if~} n \text{ is odd}~~~~~~   \\
        \end{array} \right.\\
    \end{align*} admits a self-adjoint dilation. In fact,
 $V$ acting on $\mathcal{H}\oplus\mathcal{H}\oplus\mathcal{H}$  defined by
\begin{align*}
V=
\begin{bmatrix}
0                &T^*     &0\\
T &0                     &T^*\\
0               & T     &0
\end{bmatrix}
\end{align*}
is a self-adjoint dilation of $\{A_n: n\geq 0\}.$
\end{eg}

Like before, this example  can also be generalized  to have
self-adjoint dilations for operator moment sequences $\{1, 0,
m_2T^*T, 0, m_4(T^*)^2T^2, 0, \ldots \}$, where $\{m_n:n\geq 0\}$ is the
moment sequence of a compactly supported probability measure $\mu $
on $\mathbb{R}$, which is symmetric around $0$. Such a symmetry
ensures that odd moments and also diagonal Jacobi parameters of the
measure are all equal to 0.



\section{Unitary and isometric dilations}
In this section, we discuss the necessary and sufficient
conditions  on operator  sequences to admit unitary  or isometric
dilations. Let $\{A_{n}\}_{n \geq 0}$ be a sequence in
$\mathcal{B}(\mathcal{H})$ with $A_{0}=I$.
 Then $\{A_{n}\}_{n\geq 0}$ is said to admit a unitary (respectively isometric) dilation if there is
  a Hilbert space $\mathcal{K}$ containing $\mathcal{H}$ and a unitary (respectively unitary)  $U \in \mathcal{B}(\mathcal{K})$ such that
\begin{equation} \label{Equation: UnitaryDilation}
A_{n}  = P_{\mathcal{H}} U^{n}|_{\mathcal{H}}, \; \text{for all }\;
n \geq 0.
\end{equation}

Obviously, every unitary dilation is an isometric dilation. Sz.-Nagy
dilation theorem implies, in particular, that every isometry has a
unitary dilation. Therefore, if a sequence admits an
isometric dilation, it also admits a unitary dilation. Consequently,
an operator sequence admits an isometric dilation if and only if it
admits a unitary dilation. However, there is a difference in the
notion of minimality. With notation as above, a unitary dilation $U$
acting on $\mathcal {K}$ is minimal if
$$\mathcal{K}=\overline{\mbox{span}}\{ U^nh: n\in \mathbb{Z}, h\in
\mathcal{H}\}.$$ On the other hand, an isometric dilation $U$ acting
on $\mathcal{K}$ is minimal if
$$\mathcal{K}=\overline{\mbox{span}}\{ U^nh: n\in \mathbb{Z}_+, h\in
\mathcal{H}\}.$$
Let $\mathbb{D}:= \big\{ z\in \mathbb{C}:\; |z| < 1\big\}$ be an open unit disk. The classical  Szeg\H{o} kernel and Poisson kernel  \cite{AR07, LG08} are denoted and defined respectively by
\begin{align*}
S(w,z)&= \frac{1}{ 1-\overline{w}z} {~for~ all~} z,w\in \mathbb{D}. \\
P_r(\theta)&=  \frac{1-r^2}{1-2r\cos{\theta}+ r^2} {~for~ all~} 0\leq r<1 , 0\leq \theta \leq 2\pi.
  \end{align*}
 A Poisson kernel can be written using the Szeg\H o kernel as
 \begin{align*}
                                  P_{r_{1}r_{2}}(\theta-t)
                                 &=S(w,z)+ \overline{S(w, z)} -1 \text{ for all } z,w\in \mathbb{D} \text{ and } z= r_{1} e^{i\theta}, w= r_{2} e^{i t}.
\end{align*}
 Moreover,  $P_{r}(\theta)\geq 0$ for all $0\leq r<1$ and $0\leq \theta \leq 2 \pi.$

Keeping Szeg\H{o}  and Poisson kernels in mind, we  define an
operator valued kernel function for a sequence of operators. Let
${\bf A}= \{A_{n}\}_{n\geq 0}$ be a sequence of contractions in
$\mathcal{B(H)}$. The associated Szeg\H{o} kernel function $S_{\bf
A}: \mathbb{D}\to \mathcal{B(H)}$ is defined as
 \begin{align}\label{Operator valued Szego Kernel}
 S_{\bf A}(z)=\sum_{n=0}^{\infty}z^{n}A^*_{n} \text{~for~all~} z\in  \mathbb{D}.
 \end{align}
We define the associated Poisson kernel function by
\begin{align}\label{Equation:Poisson Kernel}
  P_{\bf A}(z)= S_{\bf A}(z)+ S_{\bf A}(z)^*-I \text{~for all~} z\in \mathbb{D}.
\end{align}
F. H. Vasilescu  introduced  operator-valued Poisson kernel
functions \cite{Vasilescu 1992} for $d$-tuples of operators using
defect operators to study Holomorphic functional calculus. Such
Poisson kernel functions \textcolor{black}{cannot} be extended to operator-valued
sequences as we do not have the semigroup property (i.e.
$A_{n}A_{m}=A_{n+m}$ may not hold). Next, we prove the necessary and sufficient criteria for isometric and unitary dilations in terms of the Poisson kernel.

\begin{thm}\label{Theorem: UnitaryDilation}
Let $\{A_{n}\}_{n \geq 0}$ be a sequence in
$\mathcal{B}(\mathcal{H})$ with $A_{0}=I$.
 Then the following are equivalent:

 (i)  $\{A_{n}\}_{n\geq 0}$ admits a unitary/isometric dilation.

 (ii) For every $c_{0}, c_{1}, \cdots, c_{n} \in \mathbb{C}$ and $n \in \mathbb{N}$ we have
 \begin{equation} \label{unitarykernel}
 \sum\limits_{\ell,k=0}^n\bar{c_{\ell}}c_kA_{k-\ell }\geq 0,
 \end{equation}
  where
 \[
 A_{(k-\ell)} = \left\{\begin{array}{cc}
 A_{k-\ell}, & \mbox k \geq \ell\\
 & \\
 A^{\ast}_{\ell-k}, &   k < \ell.
 \end{array}\right.
 \]

(iii) $P_{\bf A}(z) \geq 0$ in $\mathcal{{B(H)}}$ for all $z\in
\mathbb{D}.$
\end{thm}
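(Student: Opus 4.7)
My plan is to close the cycle $(i) \Rightarrow (ii) \Rightarrow (iii) \Rightarrow (i)$, restricting attention to the unitary case because the paragraph preceding the theorem already reduces the existence of an isometric dilation to that of a unitary dilation. For $(i) \Rightarrow (ii)$, I would begin from a unitary dilation $U$ on $\mathcal{K} \supseteq \mathcal{H}$ and note that $A_n^{\ast} = P_{\mathcal{H}} U^{-n}|_{\mathcal{H}}$, so that the convention $A_{-n} := A_n^{\ast}$ extends the identity $A_m = P_{\mathcal{H}} U^m|_{\mathcal{H}}$ to every $m \in \mathbb{Z}$. The form in $(ii)$ then collapses to a manifest square: for any $h \in \mathcal{H}$,
\[
\Bigl\langle h,\, \sum_{\ell,k=0}^n \bar c_\ell c_k A_{k-\ell}\, h\Bigr\rangle = \sum_{\ell,k=0}^n \bar c_\ell c_k \langle U^\ell h, U^k h\rangle = \Bigl\|\sum_{k=0}^n c_k U^k h\Bigr\|^2 \geq 0.
\]

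For $(ii) \Rightarrow (iii)$, I would first specialize $(ii)$ to $c_0 = 1$, $c_n = -e^{i\theta}$ (others zero) to obtain $2I \geq 2\,\mathrm{Re}(e^{-i\theta} A_n)$, hence $\|A_n\| \leq 1$, which ensures $S_{\mathbf{A}}(z)$ converges absolutely for $z \in \mathbb{D}$. Fixing $z \in \mathbb{D}$ and applying $(ii)$ with $c_k = \bar z^k$ for $k = 0,\ldots,N$, then splitting $\sum_{\ell,k=0}^N z^\ell \bar z^k A_{k-\ell}$ by $m = k - \ell$ and summing two geometric series, I get the norm-convergent limit
\[
\sum_{\ell,k=0}^N z^\ell \bar z^k A_{k-\ell} \;\longrightarrow\; \frac{1}{1-|z|^2}\,P_{\mathbf{A}}(z) \quad \text{as } N \to \infty.
\]
Each truncated sum is $\geq 0$ by $(ii)$, so $(1-|z|^2)^{-1} P_{\mathbf{A}}(z) \geq 0$, which gives $(iii)$.

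For $(iii) \Rightarrow (i)$, I would mirror the Stinespring-based argument used to prove Theorem \ref{Theorem: Nagy1952}, replacing $C(X)$ by $C(\mathbb{T})$. Define $\varphi$ on the dense $\ast$-subalgebra of Laurent polynomials in $C(\mathbb{T})$ by $\varphi(e^{in\theta}) = A_n$, with $A_{-n} := A_n^{\ast}$, so that $\varphi$ is $\ast$-preserving. A direct Fourier computation using $P_{\mathbf{A}}(re^{i\theta}) = \sum_n r^{|n|} e^{in\theta} A_{-n}$ yields, for every trigonometric polynomial $p(\theta) = \sum_m \hat p(m) e^{im\theta}$,
\[
\frac{1}{2\pi}\int_0^{2\pi} p(\theta)\, P_{\mathbf{A}}(re^{i\theta})\,d\theta = \sum_m \hat p(m)\, r^{|m|}\, A_m \;\longrightarrow\; \varphi(p) \quad \text{as } r \to 1.
\]
When $p \geq 0$, the integrand is a nonnegative scalar times a nonnegative operator by $(iii)$, so each integral (and hence its limit $\varphi(p)$) is $\geq 0$. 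Therefore $\varphi$ extends by Stone--Weierstrass to a positive map on $C(\mathbb{T})$, automatically completely positive by commutativity; Stinespring's theorem then delivers $\mathcal{K} \supseteq \mathcal{H}$ and a unital $\ast$-homomorphism $\pi \colon C(\mathbb{T}) \to \mathcal{B}(\mathcal{K})$ with $\varphi(f) = P_{\mathcal{H}}\pi(f)|_{\mathcal{H}}$. The operator $U := \pi(z)$ is unitary (since $z\bar z = 1$ in $C(\mathbb{T})$) and satisfies $A_n = \varphi(z^n) = P_{\mathcal{H}} U^n|_{\mathcal{H}}$, closing the cycle.

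The main technical burden lies in $(iii) \Rightarrow (i)$: one must confirm $\varphi$ is well-defined and $\ast$-preserving on the Laurent-polynomial subalgebra (the convention $A_{-n} = A_n^{\ast}$ plays a key role here) and then use the Poisson-integral identity above to convert $(iii)$ into positivity of $\varphi$; after that, the standard CP-map machinery finishes the job. Step $(ii) \Rightarrow (iii)$ amounts to Abel summation once the contraction bound $\|A_n\| \leq 1$ is noted, and $(i) \Rightarrow (ii)$ is immediate from the dilation identity.
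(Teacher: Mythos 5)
Your proof is correct, and it is organized differently from the paper's. The paper proves the two equivalences $(i)\Leftrightarrow(ii)$ and $(i)\Leftrightarrow(iii)$ separately: its $(ii)\Rightarrow(i)$ goes through the Fej\'er--Riesz factorization of strictly positive trigonometric polynomials to get positivity of the map $e^{in\theta}\mapsto A_n$, and its implication involving the Poisson kernel in the forward direction is $(i)\Rightarrow(iii)$, proved via the operator identity $(I-zU^{*})^{-1}+(I-\overline{z}U)^{-1}-I=(1-|z|^{2})(I-\overline{z}U)^{-1}(I-\overline{z}U)^{-*}\geq 0$ compressed to $\mathcal{H}$. You instead close the single cycle $(i)\Rightarrow(ii)\Rightarrow(iii)\Rightarrow(i)$: your $(i)\Rightarrow(ii)$ and $(iii)\Rightarrow(i)$ coincide with the paper's arguments (the paper's $(iii)\Rightarrow(i)$ only establishes positivity of $\varphi$ and tacitly borrows the extension-plus-Stinespring machinery from its $(ii)\Rightarrow(i)$; you make that explicit), while your $(ii)\Rightarrow(iii)$ is new relative to the paper --- an Abel-summation computation showing $\sum_{\ell,k=0}^{N}z^{\ell}\overline{z}^{k}A_{k-\ell}\to(1-|z|^{2})^{-1}P_{\mathbf{A}}(z)$, which lets you dispense with Fej\'er--Riesz entirely. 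What each approach buys: the paper's resolvent identity is slicker when a dilation is already in hand, whereas your summation argument extracts the Poisson-kernel positivity directly from the kernel condition, which is arguably the more natural logical order. One small inaccuracy: your choice $c_{0}=1$, $c_{n}=-e^{i\theta}$ yields $2\,\mathrm{Re}(e^{-i\theta}\langle h,A_{n}h\rangle)\leq 2\|h\|^{2}$, i.e.\ numerical radius $w(A_{n})\leq 1$, hence only $\|A_{n}\|\leq 2$ rather than $\|A_{n}\|\leq 1$; this is harmless, since $\|A_{n}\|\leq 2$ already gives the absolute convergence of $S_{\mathbf{A}}(z)$ on $\mathbb{D}$ and the norm convergence of your truncated Toeplitz sums.
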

\begin{proof}
Before coming to the proof of this result, we point out that  there
is a subtle point here. The condition in Equation
\eqref{unitarykernel} is a priori weaker than having complete
positivity of the $B({\mathcal H})$ valued kernel $K: {\mathbb
Z}\times {\mathbb Z}\to B({\mathcal H})$ defined by
\begin{equation} \label{unitaryCPkernel}
K(\ell, k)= A_{(k-\ell)}, ~~\text{for every}\; \ell, k \in {\mathbb Z}
\end{equation}
as that would mean,
 \begin{equation*}
 \sum _{\ell ,k=0}^n\langle
g_{\ell}, A_{(k-\ell)}g_{k} \rangle \geq 0, \; \text{for all}\; g_0, g_1,
\ldots , g_n\in {\mathcal H}, n\in {\mathbb N}.
\end{equation*}
Proof of $(i) \implies (ii):$ Assume that Equation(\ref{Equation: UnitaryDilation}) holds true. Let $g \in \mathcal{H}$, then for any $N \geq 0$ we get
\begin{align*}
    \Big\langle g, \; \sum\limits_{\ell, k = 0}^{N} \overline{c_{\ell}} c_{k} A_{(k-\ell)}g \Big\rangle &= \Big\langle \begin{bmatrix} c_{0}g\\c_{1}g\\\vdots\\c_{n}g \end{bmatrix}, \; \begin{bmatrix} I & U & \cdots & U^{N}\\U^{\ast} & I&\cdots & U^{N-1}\\ \vdots & \vdots & \ddots & \vdots\\ {U^{\ast}}^{N}& {U^{\ast}}^{N-1} &\cdots & I \end{bmatrix}\begin{bmatrix} c_{0}g\\c_{1}g\\\vdots\\c_{n}g \end{bmatrix}\Big\rangle\\
    & = \Big\langle \sum\limits_{\ell=0}^{N}c_{\ell}U^{\ell}g,\; \sum\limits_{\ell=0}^{N}c_{\ell}U^{\ell}g \Big\rangle\\&\geq 0.
\end{align*}
Proof of $(ii) \implies (i):$ Similar to the proof of \cite[Theroem 2.6]{Paulsen 2002}, let us define the operator system $
\mathcal{S} = \Big\{\sum\limits_{k=-N}^{N} c_{k} e^{ik\theta}:\;
N\geq 0\Big\} \subset C(\mathbb{T})$ and define $\Phi \colon
\mathcal{S} \to \mathcal{B}(\mathcal{H})$ by $\Phi(e^{in\theta}) =
A_{n}$ for every $n \in \mathbb{Z}$. It is enough to show that $\Phi$ maps strictly positive function to a positive operator in  $\mathcal{B}(\mathcal{H})$. Suppose that $f \in
\mathcal{S}$ is strictly positive then by Fej\'{e}r-Riesz theorem
(see \cite[Lemma 2.5]{Paulsen 2002}), we see that $f(e^{i\theta}) =
\sum\limits_{i,j = 0}^{N} \overline{c_{\ell}}c_{k}
e^{i(k-\ell)\theta}$, for some $c_{0}, c_{1}, \hdots, c_{N} \in \mathbb{C}$ and
hence
$$\Phi(f(e^{i\theta})) = \sum\limits_{\ell, k = 0}^{N}
\overline{c_{\ell}}c_{k} \Phi ( e^{i(k-\ell) \theta})=
\sum\limits_{\ell, k = 0}^{N} \overline{c_{\ell}} c_{k} A_{(k-\ell)}
\geq 0.$$
Here $\Phi$ is a positive map and so bounded. Since $\mathcal{S}$ is dense in $C(\mathbb{T})$ by Stone-Weierstrass we see that $\Phi$ can be
extended to a positive map on $C(\mathbb{T})$ by \cite[Exercise
2.2]{Paulsen 2002}. Again we denote it by $\Phi$. Therefore, it
follows from \cite[Theorem 3.11]{Paulsen 2002} that $\Phi$ is
completely positive map on $C(\mathbb{T})$.
Now by Stinespring's  theorem there exists a triple  $({\mathcal K},
\pi , V)$ where ${\mathcal K}$ is a Hilbert space, $\pi : C({\mathbb
T})\to B({\mathcal K})$ is a unital $*$-homomorphism, $V\colon
{\mathcal H}\to {\mathcal K}$ a bounded linear map satisfying
\begin{equation}\label{Stinespring1} \Phi (f) = V^*\pi (f)V, ~~\forall f\in C({\mathbb
T}).\end{equation}

Since $\Phi $ is unital, $V$ is an isometry and we may consider
${\mathcal H}$ as a subspace of ${\mathcal K}$ by identifying $h\in
{\mathcal H}$ with $Vh$ in ${\mathcal K}$. Then \textcolor{black}{equation
\eqref{Stinespring1}} reads as
$$\Phi (f) = P_{\mathcal H}\pi (f)|_{\mathcal H}, f\in C({\mathbb
T}).$$ If $f_0(z)=z$, then $\pi(f_{0})$ is unitary operator as,
\begin{equation*}
\pi (f_0)^{\ast} \pi(f_{0}) = \pi(f_{0}) \pi(f_{0})^{\ast} = \pi(|f_{0}(z)|^{2}) = I.
\end{equation*}
By taking $U = \pi(f_{0})$ in ${\mathcal B(K)}$ we get
\begin{align*}
P_{\mathcal{H}} U^{n}\big|_{\mathcal{H}} = P_{\mathcal{H}}\pi(f_{0})^{n}\big|_{\mathcal{H}} = P_{\mathcal{H}}\pi(f^{n}_{0})\big|_{\mathcal{H}} = \Phi(z^{n}) = A_{n}, \; \text{for}\; n \geq 0.
\end{align*}
\noindent Proof of $(i)\implies (iii):$ Suppose $\{A_{n}: n\geq 0\}$ admits a unitary dilation. Then there is a unitary operator $U\in \mathcal{B(K)}$ for some Hilbert space $\mathcal{K}\supseteq \mathcal{H}$ such that
 \begin{align}\label{Dilation Relation}
    A_{n}= &P_{\mathcal{H}}U^n|_{\mathcal{H}} \text{~for all~} n \geq 0.
 \end{align}
 First notice that for each $z\in \mathbb{D},$ $(1-z U^*)$ is invertible and $(1-zU^{*})^{-1}= 1+ zU^*+ z^{2}U^{* 2}+ \cdots$ and
 $$\Vert I+ zU^*+ z^{2}U^{* 2}+ \cdots\Vert \leq 1+ \vert z\vert + \vert z\vert^2 +\cdots =\frac{1}{1-\vert z\vert}.$$
 Now a simple computation ensures that $$(I-zU^{*})^{-1}+ (I-\overline{z}U)^{-1}-I= (1-\vert z\vert^2)(I-\overline{z}U)(I-\overline{z}U)^*.$$
 Also, it is immediate to see that $(I-zU^{*})^{-1}+ (I-\overline{z}U)^{-1}-I \geq 0$ as $\vert z\vert<1.$ It implies that
  $ P_{\mathcal{H}}\Big( (I-zU^{*})^{-1}+ (I-\overline{z}U)^{-1}-I\Big)|_{\mathcal{H}} \geq 0.$  Using the hypothesis of dilation Equation \eqref{Dilation Relation}, we see that
  \begin{align*}
 \Vert I \Vert +\Vert zA^{*}_{1} \Vert+ \Vert z^{2}A^{*}_{2} \Vert+& \cdots + \Vert z^{n}A_{n}^{*}  \Vert\\
          & \leq      1 + \vert z\vert \Vert A^{*}_{1} \Vert+ \vert z^{2} \vert\Vert A^{*}_{2}\Vert +\cdots + \vert z^{n}\vert \Vert A_{n}^{*}\Vert \\
   &\leq   1 + \vert z\vert \Vert U^{*} \Vert+ \vert z^{2} \vert\Vert U^{*2}\Vert +\cdots + \vert z^{n}\vert \Vert U^{*n}\Vert \\
   &=1 + \vert z\vert + \vert z \vert^{2}  +\cdots + \vert z \vert^{n} \\
   &\leq  \frac{1}{1- \vert z\vert} \text{~for all~} z\in \mathbb{D}.
  \end{align*}
  Thus the series $\sum\limits_{n=0}^{\infty}z^{n}A^*_{n}$  converges in norm absolutely for all $z\in \mathbb{D}.$ Therefore, $S_{\bf A}(z)$  is well defined. Further, we compute that
\begin{align*}
S_{\bf A}(z)+ S_{\bf A}(z)^{*}-I = &\sum_{n=0}^{\infty}z^{n}A^*_{n}+ \sum_{m=0}^{\infty}\overline{z}^{m}A_{m}-I\\
=& \sum_{n=0}^{\infty}z^{n} P_{\mathcal{H}}U^{*n}|_{\mathcal{H}}+ \sum_{m=0}^{\infty}\overline{z}^{m} P_{\mathcal{H}}U^{m}|_{\mathcal{H}} -I\\
                                   = & P_{\mathcal{H}}\big (\sum_{n=0}^{\infty}z^{n} U^{*n}\big)|_{\mathcal{H}}+ P_{\mathcal{H}}\big (\sum_{m=0}^{\infty}\overline{z}^{m} U^{m}\big)|_{\mathcal{H}} -I\\
                                   =&P_{\mathcal{H}}\Big( (I-zU^{*})^{-1}+ (I-\overline{z}U)^{-1}-I\Big)|_{\mathcal{H}}.
                                    \end{align*}
Since $P_{\mathcal{H}}\Big( (I-zU^{*})^{-1}+
(I-\overline{z}U)^{-1}-I\Big)|_{\mathcal{H}}\geq 0,$ therefore we
have $P_{\bf A}(z)\geq 0$ for all $z\in \mathbb{D}.$\\

\noindent Proof of $(iii) \implies (i):$ We prove this result motivated by Theorem 2.14 of \cite{Paulsen 2002}. Assume that  $P_{\bf A}(z) \geq 0$ for all $z\in \mathbb{D}.$  Let $0\leq r <1,$ then for
each $l\in \mathbb{Z},$ we have
\begin{align}\label{Comptaion of using Poisson Kernel}
  \notag  \frac{1}{2\pi}\int_{0}^{2\pi} e^{i l \theta}P_{\bf A}(re^{i\theta})d\theta 
\notag    &= \frac{1}{2\pi}\int_{0}^{2\pi} e^{i l \theta}\big( \sum_{n=0}^{\infty}r^{n} e^{in\theta}A^*_{n} +\sum_{m=0}^{\infty}r^{m} e^{-im\theta}A_{m} \big)d\theta \\
 &=\left\{ \begin{array}{cc}
     r^{l}A_{l} &\text{if} \; l \geq 0 \\
   &\\
   r^{-l}A_{-l}^{*} &\text{if} \; l<0. \\
\end{array} \right.
\end{align}
Let us define $\varphi \colon \mathcal{S} \to \mathcal{B}(\mathcal{H})$ by
$$
\varphi\Big(\sum_{n=0}^{N} p_{n}e^{in\theta}+ \sum_{m=0}^{N}
\overline{q_{m}}e^{-im\theta}\Big)=\sum_{n=0}^{N}
p_{n}A_{n}+\sum_{m=0}^{N} \overline{q_{m}} A_{m}^{*},
$$
for all $\sum\limits_{n=0}^{N} p_{n}e^{in\theta}+ \sum\limits_{m=0}^{N}
\overline{q_{m}}e^{-im\theta} \in \mathcal{S}.$ Now it is enough to
show that the map $\varphi: \mathcal{S}\to \mathcal{B}(\mathcal{H})$
is a positive map. Now, we observe the following
\begin{align*}
   \frac{1}{2\pi}\int_{0}^{2\pi}\Big( \sum_{n=0}^{N} p_{n} e^{i n\theta} \Big) P_{\bf A}(re^{i\theta}) d\theta
   &= \sum_{n=0}^{N} p_{n} \frac{1}{2\pi}\int_{0}^{2\pi} e^{i n\theta}  P_{\bf A}(re^{i\theta}) d\theta\\
   &=\sum_{n=0}^{N}p_{n} r^n A_{n}.
\end{align*}
Similarly, we have
\begin{align*}
   \frac{1}{2\pi}\int_{0}^{2\pi}\Big( \sum_{m=0}^{N} \overline{q_{m}}e^{-im\theta} \Big) P_{\bf A}(re^{i\theta}) d\theta
   &=\sum_{m=0}^{N} \overline{q_{m}} r^m A_{m}^{*}.
   \end{align*}
    Combining all, we finally obtain that
\begin{align} \label{Equation: PoissonIntegral}
\notag \varphi\big(\sum_{n=0}^{N} p_{n}e^{in\theta}&+ \sum_{m=0}^{N} \overline{q_{m}}e^{-im\theta}\big)\\
&=\sum_{n=0}^{N}p_{n}A_{n}+\sum_{m=0}^{N} \overline{q_{m}} A_{m}^{*} \\
 \notag &= \lim_{r \to 1-} \big(\sum_{n=0}^{N}p_{n}r^{n}A_{n}+\sum_{m=0}^{N} \overline{q_{m}}r^{m} A_{m}^{*}\big)\\
&=\lim_{r \to 1-}\Big( \frac{1}{2\pi}\int_{0}^{2\pi}\big(
\sum_{n=0}^{N} p_{n} e^{i n\theta} + \sum_{m=0}^{N}
\overline{q_{m}}e^{-im\theta}\big) P_{\bf A}(re^{i\theta}) d\theta
\Big).
   \end{align}
 Notice that for all $0\leq \theta \leq 2\pi$ and $0\leq r<1,$ we have $P_{\bf A}(re^{i\theta})\geq 0.$ This means that  whenever
  $ \sum\limits_{n=0}^{N} p_{n} e^{i n\theta} +
\sum\limits_{m=0}^{N} \overline{q_{m}}e^{-im\theta} \geq 0$ we have
 \begin{equation*}
 \int_{0}^{2\pi}\Big( \sum_{n=0}^{N} p_{n} e^{i n\theta} + \sum_{m=0}^{N} \overline{q_{m}}e^{-im\theta}\Big) P_{\bf A}(re^{i\theta}) \; d\theta \geq 0,\; \text{for evey}\; 0 \leq r <1.
\end{equation*}
Thus $\varphi$ is positive by Equation \eqref{Equation: PoissonIntegral}. This completes the proof. 
\end{proof}
\noindent We understand Theorem \ref{Theorem: UnitaryDilation} in more detail in the next section.
\subsection{Concrete Unitary dilation}

 Let $V$ be a minimal isometric dilation on some Hilbert space ${\mathcal K}$ for a sequence
 of contractions $\{A_n\}_{n\geq 0}$ on $\mathcal{H}.$  In particular, ${\mathcal K}= \overline{\rm span}\{V^{n}\mathcal{H}: n\geq
0\}$. Then by Lemma \ref{Upper Hessenberg}, ${\mathcal K}$ decomposes
as
 $${\mathcal K}={\mathcal H}_0\oplus {\mathcal H}_1\oplus \cdots ,$$
 where ${\mathcal H}_0={\mathcal H}$ and with respect to this decomposition the operator $V$ has
 the block form:
\begin{equation}\label{Equation: Schaffer}
    V= \begin{bmatrix}
    V_{00} & V_{01} & V_{02} & \cdots & V_{0n} &
    \cdots \\
    V_{10} & V_{11} & V_{12} & \cdots & V_{1n} & \cdots \\
    0      & V_{21} & V_{22} & \cdots & V_{2n} & \cdots \\
    0      & 0      & V_{32} & \cdots & V_{3n} & \cdots \\
    \vdots & \ddots & \ddots & \vdots & \vdots & \vdots \\
    \end{bmatrix}.
\end{equation}

We wish to construct $V_{ij}$'s using the given sequence $\{A_n\}.$
Recall that  ${\mathcal H}_{n]}=\overline{\mbox span} \{ V^mh: 0\leq
m\leq n-1, h\in {\mathcal H}\}$ and ${\mathcal H}_n= {\mathcal
H}_{n]}\bigcap {{\mathcal H}_{(n-1)]}^{\perp }}$ for $n\geq 1$ with
${\mathcal H}_{0]}= {\mathcal H}_0.$

 We try to determine the
blocks using the sequence $\{A_n\}_{n\geq 0}$ and the dilation
property. Like in the case of self-adjoint dilation we assume
invertibility of concerned operators as and when required. Since $V$
is an isometric dilation for the moment sequence $\{A_{n}\}_{n \geq
0}$ we have $P_{\mathcal{H}}V^{n}\big|_{\mathcal{H}} = A_{n}$. In
other words, $(V^{n})_{00} = A_{n},\; \text{for}\; n \geq 0.$ Let us
compute the first column of $V$. Clearly, $V_{00} = A_{1}$ and since
$V$ is an isometry,  we get
\begin{equation*}
    I = (V^{\ast}V)_{00} = V_{00}^{\ast}V_{00}+V_{10}^{\ast}V_{10} = A_{1}^{\ast}A_{1}+ V_{10}^{\ast}V_{10}.
\end{equation*}
One can choose that $V_{10} = (I-A_{1}^{\ast}A_{1})^{1/2}$ as
$A_{1}$ is a contraction. The first row of $V$ is given by the
following recursive relation: For every $n\in \mathbb{N},$ we have
\begin{align*}
    A_{n} &= (V^{n})_{00}\\
          &= \sum\limits_{0\leq r_{1}, r_{2},\hdots,r_{(n-1)}\leq n-1} V_{0r_{1}} V_{r_{1}r_{2}}\cdots V_{r_{(n-1)}0} \\
          &= \sum\limits_{0\leq r_{1}, r_{2},\cdots, r_{n-1} \leq n-1\; \&\; r_{1} \neq n-1} \!\!\! \! \! \! V_{0r_{1}} V_{r_{1}r_{2}}\cdots V_{r_{(n-1)}0} \\
          & ~~ \; \;\;\; + \sum\limits_{0\leq r_{2},\cdots, r_{n-1} \leq n-1} V_{0(n-1)} V_{(n-1) r_{2}}\cdots V_{r_{(n-1)}0}.
\end{align*}
Since $V_{ij} = 0$ whenever $i>j+1$, it implies that
\begin{align*}
    \sum\limits_{0\leq r_{2},\cdots, r_{n-1} \leq n-1} V_{0(n-1)} &V_{(n-1) r_{2}}\cdots V_{r_{(n-1)}0}\\
     &= V_{0(n-1)} V_{(n-1)(n-2)} V_{(n-2)(n-3)}\cdots V_{10}.
\end{align*}
Therefore,
\begin{align*}
    V_{0n} &=  \Big[ A_{n} - \sum\limits_{0\leq r_{1}, r_{2},\cdots, r_{n-1} \leq n-1\; \&\; r_{1} \neq n-1}\!\!\!\!\!\!  V_{0r_{1}} V_{r_{1}r_{2}} \cdots V_{r_{(n-1)}0}\Big] \\
    & ~~~~~~~~~~~\;\;\;\Big(V_{(n-1)(n-2)}V_{(n-2)(n-3)}\cdots V_{10}\Big)^{-1}.
\end{align*}
Next, we use the isometric property of $V$ to obtain expression for
the $n^{\text{th}}$ column of $V$. For a fixed $n\in \mathbb{N}$,
suppose the first $(n-1)$ columns are known then $V_{1n}$ is given
by considering the inner product of the first column with the
$n^{\text{th}}$ column which is zero. That is, $ V_{00}^{\ast}V_{0n} + V_{10}^{\ast}V_{1n} = 0,$ and therefore
\begin{align*}
    V_{1n} &= - (I-A_{1}^{\ast}A_{1})^{-1/2} A^{\ast}_{1}V_{0n}.
\end{align*}
Indeed $V_{kn}$ for $0< k \leq n$ is given by the inner product of
the $(k-1)^{\text{th}}$ column with the $n^{\text{th}}$ column which
is again zero. That is,
 $ \sum\limits_{i=0}^{k} V^{\ast}_{i(k-1)}V_{in} = 0.$
Equivalently, $$V^{\ast}_{k(k-1)}V_{kn}+ \sum\limits_{i=0}^{k-1}
V^{\ast}_{i(k-1)}V_{in}=0.$$ This implies that
\begin{equation*}
    V_{kn}=- \big(V_{k(k-1)}^{\ast}\big)^{-1}\sum\limits_{i=0}^{k-1}V_{i(k-1)}^{\ast}V_{in}.
\end{equation*}
For $k=n+1$, we have $ \sum\limits_{i=0}^{n+1}V_{in}^{\ast}V_{in} =I$ and equivalently,   $V_{(n+1)n}^{\ast}V_{(n+1)n} + \sum\limits_{i=0}^{n}V_{in}^{\ast}V_{in} = I.$ It implies that
\begin{align*}
    V_{(n+1)n}^{\ast}V_{(n+1)n} &= I-\sum\limits_{i=0}^{n}V_{in}^{\ast}V_{in}.
\end{align*}
One can choose that $$V_{(n+1)n} =
\big[I-\sum\limits_{i=0}^{n}V_{in}^{\ast}V_{in}\big]^{1/2}.$$
 As a
result, the recurrence relations are given by 
$$V_{00} = A_{1},\; V_{10} = (I-A_{1}^{\ast}A_{1})^{1/2},$$
\begin{align*}
    V_{0n} &=  \Big[ A_{n} - \sum\limits_{0\leq r_{1}, r_{2},\cdots, r_{n-1} \leq n-1\; \&\; r_{1} \neq n-1}\!\!\!\!\!\!  V_{0r_{1}} V_{r_{1}r_{2}} \cdots V_{r_{(n-1)}0}\Big] \\
    & ~~~~~~~~~~~\;\;\;\Big(V_{(n-1)(n-2)}V_{(n-2)(n-3)}\cdots V_{10}\Big)^{-1}.
\end{align*}  and
\[V_{kn} = \left\{ \begin{array}{cc}
- \big(V_{k(k-1)}^{\ast}\big)^{-1}\sum\limits_{i=0}^{k-1}V_{i(k-1)}^{\ast}V_{in} & \mbox{ if\; $0<k\leq n$} \\
&\\
\Big[ I - \sum\limits_{i=0}^{n} V_{in}^{\ast}V_{in}\Big]^{1/2} &
\mbox{if \; $k = n+1$}
\end{array}\right.\]
for $n>0.$
\\

\noindent The  above computations lead to the following result.
\begin{thm}\label{Theorem: isometry}
Let $\{A_{n}\}_{n \geq 0}$ be a sequence of contractions on
$\mathcal{H}$ and $A_{0} = I$ admitting an isometric dilation. If
the inverses appearing in the recurrence relations above are well-defined bounded operators, then these formulae provide a minimal
isometric dilation with the blocks of $V$ described above.
\end{thm}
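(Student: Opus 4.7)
The plan is to verify that the recurrences derived above really do assemble into a minimal isometric dilation, which amounts to reversing the derivation that produced them. By the hypothesis an isometric dilation exists, so by the uniqueness theorem for minimal isometric dilations stated earlier in the paper, there is a minimal isometric dilation $V$ on some $\mathcal{K} \supseteq \mathcal{H}$; applying Lemma \ref{Upper Hessenberg} to this $V$ gives a canonical orthogonal decomposition $\mathcal{K} = \mathcal{H}_0 \oplus \mathcal{H}_1 \oplus \cdots$ with $\mathcal{H}_0 = \mathcal{H}$ and $\mathcal{H}_n = \overline{V_{n(n-1)}(\mathcal{H}_{n-1})}$, and $V$ takes the upper Hessenberg form of \eqref{Equation: Schaffer}. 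In this concrete model the blocks of $V$ become computable.

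The verification has two parallel ingredients: (a) the isometric identity $V^*V = I$, and (b) the dilation identity $A_n = (V^n)_{00}$. The prescribed formulas for $V_{kn}$ with $0 < k \leq n$ and for $V_{(n+1)n}$ are precisely the column-by-column consequences of $V^*V = I$: writing out the inner product of the $(k-1)$-th and $n$-th columns gives the former, and the squared norm of the $n$-th column gives the latter. The positive square root defining $V_{(n+1)n}$ is available because $I - \sum_{i=0}^n V_{in}^* V_{in} \geq 0$, which holds since the columns of a genuine isometry have norm at most one. Expanding $(V^n)_{00}$ as a sum over admissible paths in the upper Hessenberg graph, the unique path $0 \to n{-}1 \to n{-}2 \to \cdots \to 1 \to 0$ isolates the highest off-diagonal contribution and yields the recurrence for the top-row block, provided the product $V_{(n-1)(n-2)} \cdots V_{10}$ is boundedly invertible.

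Minimality is immediate from the block structure: by construction $\mathcal{H}_{n+1} = \overline{V_{(n+1)n}(\mathcal{H}_n)}$, so inductively $\overline{\mathrm{span}}\{V^m h : m \geq 0,\, h \in \mathcal{H}\} = \bigoplus_{n \geq 0} \mathcal{H}_n = \mathcal{K}$. Conversely, once the blocks are declared by these recurrences, direct inspection shows that $V^*V = I$ holds column by column and that $(V^n)_{00} = A_n$ by re-expanding the sum of paths—the recurrences were tailored precisely to force these two identities.

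The main obstacle is the invertibility hypothesis itself: solving for the top-row block at level $n$ requires right-multiplication by $(V_{(n-1)(n-2)} \cdots V_{10})^{-1}$, and solving for $V_{kn}$ with $0 < k \leq n$ requires $(V_{k(k-1)}^*)^{-1}$. Without these, the blocks are determined only up to a partial isometry into the defect subspaces $\mathcal{H}_n$, and no clean closed form survives. A secondary subtlety is the uniform boundedness of the top-row blocks as $n \to \infty$; this is forced by $\|V\| = 1$ but is not transparently visible from the closed formula, so one must appeal back to the existence of the abstract dilation furnished by Theorem \ref{Theorem: UnitaryDilation} to certify that the recursion really produces a bounded operator rather than just a formal symbol.
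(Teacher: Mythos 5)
Your proposal is correct and follows essentially the same route as the paper: the paper offers no separate proof of this theorem, instead letting the preceding derivation (Lemma \ref{Upper Hessenberg} giving the upper Hessenberg decomposition of the minimal dilation, followed by extracting the blocks column by column from $V^*V=I$ and $(V^n)_{00}=A_n$) stand as the argument, which is exactly what you reconstruct. Your additional remarks on the converse verification, the positivity of $I-\sum_{i=0}^{n}V_{in}^{*}V_{in}$, and minimality via $\mathcal{H}_{n+1}=\overline{V_{(n+1)n}(\mathcal{H}_{n})}$ are consistent with, and slightly more explicit than, what the paper records.
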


Now we look at unitary dilations.

\begin{thm}\label{Theorem: minimal unitary} Suppose $U$ is a unitary on a Hilbert space $\mathcal{K}$ and  $\mathcal{H}$ is a closed subspace
of $\mathcal{K}$, satisfying $\mathcal{K}=\overline{\rm span}\{
U^nh: h\in \mathcal{H}, n\in \mathbb{Z}\}.$
 Then ${\mathcal K}$ decomposes as
 $${\mathcal K}= \cdots \oplus {\mathcal H}_{-1}\oplus {\mathcal H}_{0}\oplus {\mathcal H}_1\oplus \cdots ,$$
 where ${\mathcal H}_0={\mathcal H}$ so that with respect to this decomposition  $U$ has the form:
\[
U= \left[\begin{array}{@{}ccccc|ccccccc@{}}
   &  \ddots    &  \vdots & \vdots                       & \vdots          &  \vdots                  &   \vdots                  &   \vdots                                 &                       \\
   &    &I   &0                      & 0                  & 0                           &0                              &0                                             &                  \\
   &  \hdots             & 0 & I                        & 0                    &0                               &   0                              & 0                                              &\hdots            \\
      \hline
   & \hdots        & 0   &0                      &U_{0(-1)}         & U_{00}                &U_{01}                 &U_{02}                                    &\hdots        \\
   &  \hdots      &  0 &0                      &U_{1(-1)}         & U_{10}                &  U_{11}                  &  U_{12}                                  &  \hdots     \\
   & \hdots       &  0 & 0                     & U_{2(-1)}        & 0                         &U_{21}                   &U_{22}                                     &\hdots                         \\
   &  \hdots      &  0 &0                      & U_{3(-1)}        & 0                         &0                            &U_{32}                                     &  \hdots                       \\
   &                  & \vdots  &\vdots               & \vdots           &  \vdots                & \vdots                    &  \ddots                                    &\ddots                       \\
      \end{array}\right].
\]
\end{thm}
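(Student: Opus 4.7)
The plan is to decompose $\mathcal{K}$ as $\mathcal{H}^+ \oplus \mathcal{K}^-$, where $\mathcal{H}^+$ is the isometric dilation subspace obtained by restricting to non-negative powers of $U$, and $\mathcal{K}^-$ is built from a wandering subspace. First, I would set $\mathcal{H}^+ := \overline{\rm span}\{U^n h : n \geq 0,\, h \in \mathcal{H}\}$. Since $U\mathcal{H}^+ \subseteq \mathcal{H}^+$ and $U$ is unitary, the restriction $V := U|_{\mathcal{H}^+}$ is an isometry on $\mathcal{H}^+$. Applying Lemma \ref{Upper Hessenberg} to $V$ furnishes the decomposition $\mathcal{H}^+ = \mathcal{H}_0 \oplus \mathcal{H}_1 \oplus \cdots$ with $\mathcal{H}_0 = \mathcal{H}$ in which $V$, hence also $U|_{\mathcal{H}^+}$, takes upper Hessenberg form. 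This produces all blocks $U_{ij}$ with $i,j \geq 0$ displayed in the positive quadrant of the matrix.

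Next, I would introduce the wandering subspace $\mathcal{L} := \mathcal{H}^+ \ominus U\mathcal{H}^+$ and define $\mathcal{H}_{-n} := U^{-n}\mathcal{L}$ for $n \geq 1$. Using unitarity of $U$, a short computation shows these spaces are mutually orthogonal and each is orthogonal to $\mathcal{H}^+$: for $m > n \geq 1$ and $\ell,\ell' \in \mathcal{L}$, $\langle U^{-n}\ell, U^{-m}\ell'\rangle = \langle \ell, U^{n-m}\ell'\rangle$, which vanishes because $U^{n-m}\ell' \in U\mathcal{H}^+$ while $\ell \perp U\mathcal{H}^+$; the case $n = m$ gives $\langle \ell, \ell'\rangle$, so that $U^{-n}|_\mathcal{L}$ is a unitary identification $\mathcal{L} \cong \mathcal{H}_{-n}$. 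A similar calculation yields $\mathcal{H}^+ \perp \mathcal{H}_{-n}$.

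The crux of the argument is to verify that $\mathcal{M} := \mathcal{H}^+ \oplus \bigoplus_{n \geq 1}\mathcal{H}_{-n}$ equals all of $\mathcal{K}$. I would do this by showing $\mathcal{M}$ is reducing for $U$ and then invoking the minimality hypothesis, since $\mathcal{H} \subseteq \mathcal{M}$ forces $\mathcal{M}$ to contain the cyclic span. For $U$-invariance: $U\mathcal{H}^+ \subseteq \mathcal{H}^+$, $U\mathcal{H}_{-n} = \mathcal{H}_{-(n-1)}$ for $n \geq 2$, and $U\mathcal{H}_{-1} = \mathcal{L} \subseteq \mathcal{H}^+$. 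For $U^*$-invariance, the key identity is
\[
U^*\mathcal{H}^+ \;=\; U^{-1}\bigl(U\mathcal{H}^+ \oplus \mathcal{L}\bigr) \;=\; \mathcal{H}^+ \oplus U^{-1}\mathcal{L} \;=\; \mathcal{H}^+ \oplus \mathcal{H}_{-1},
\]
together with the obvious $U^*\mathcal{H}_{-n} = \mathcal{H}_{-(n+1)}$. Once $\mathcal{M} = \mathcal{K}$ is established, the block form reads off immediately: identifying each $\mathcal{H}_{-n}$ with $\mathcal{L}$ via $U^{-n}|_\mathcal{L}$, the map $U\colon \mathcal{H}_{-n} \to \mathcal{H}_{-(n-1)}$ is the identity for $n \geq 2$ (the $I$'s on the superdiagonal of the negative block), $U\colon \mathcal{H}_{-1} \to \mathcal{L} \subseteq \mathcal{H}^+$ produces the special column $(U_{k,-1})_{k \geq 0}$ with zeros in all negative-index rows, and $U|_{\mathcal{H}^+}$ supplies the upper Hessenberg lower-right quadrant. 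The subtle step is the computation $U^*\mathcal{H}^+ = \mathcal{H}^+ \oplus \mathcal{H}_{-1}$, which rests squarely on the orthogonal decomposition $\mathcal{H}^+ = U\mathcal{H}^+ \oplus \mathcal{L}$ combined with the unitarity of $U$; everything else is bookkeeping.
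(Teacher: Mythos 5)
Your proof is correct, and its overall strategy matches the paper's: split off $\mathcal{K}_+=\bigvee_{n\ge 0}U^n\mathcal{H}$, handle it with Lemma \ref{Upper Hessenberg}, and realize the rest as backward translates of a single wandering subspace. The two arguments diverge in how they exhaust the negative part. The paper sets $\mathcal{K}_-=\mathcal{K}_+^\perp$, shows $U^*|_{\mathcal{K}_-}$ is a pure shift by proving $\bigcap_n U^{*n}\mathcal{K}_-=\{0\}$ from minimality, and then invokes the Wold-type decomposition with wandering subspace $\mathcal{K}_-\ominus U^*\mathcal{K}_-$; you instead build the negative spaces directly as $U^{-n}\mathcal{L}$ from the defect space $\mathcal{L}=\mathcal{K}_+\ominus U\mathcal{K}_+$ of the isometry $U|_{\mathcal{K}_+}$, and deduce exhaustion by showing your sum is a reducing subspace containing $\mathcal{H}$. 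The two wandering subspaces coincide (one checks $U^{-1}\mathcal{L}=\mathcal{K}_-\ominus U^*\mathcal{K}_-$), so the decompositions are literally the same; your version is slightly more self-contained since it never needs the purity argument, while the paper's makes the shift structure of $U^*|_{\mathcal{K}_-}$ explicit. One small slip: in your orthogonality check for $m>n\ge 1$ you reduce $\langle U^{-n}\ell,U^{-m}\ell'\rangle$ to $\langle \ell,U^{n-m}\ell'\rangle$ and claim $U^{n-m}\ell'\in U\mathcal{K}_+$, but $n-m<0$ there; you should instead write it as $\langle U^{m-n}\ell,\ell'\rangle$ and use $U^{m-n}\ell\in U\mathcal{K}_+\perp\ell'$. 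This is only an indexing transposition, not a gap.
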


\begin{proof} Take ${\mathcal H}_{0]}= {\mathcal H}_0$ and ${\mathcal H}_{n]}=\overline{\mbox{span}} \{ U^mh: 0\leq m\leq
n-1, h\in {\mathcal H}\}$ and ${\mathcal H}_n= {\mathcal
H}_{n]}\bigcap {{\mathcal H}_{(n-1)]}^{\perp }}$ for $n\geq 1.$
 $\mathcal{K}_{+}=\bigvee^{\infty}\limits_{n=0} U^{n} \mathcal{H},$ and $\mathcal{K}_{-}=\mathcal{K}^{\perp}_{+}.$ Then clearly ${\mathcal K}_{+}=
\oplus_{n\geq 0} {\mathcal H}_n$
 and  $U ({\mathcal
H}_{n]})\subseteq {\mathcal H}_{(n+1)]}.$ It implies that $n$-th column of the block matrix satisfies the desired property for all $n\geq 0.$
Notice that $U^*(\mathcal{K}_{-})\subseteq \mathcal{K}_{-}$ as $\mathcal{K}_{-}= \mathcal{K}^{\perp}_{+}$ and $U(\mathcal{K}_{+})\subseteq \mathcal{K}_{+}.$ We claim that
$$
\bigcap_{n=0}^{\infty}U^{*n}(\mathcal{K}_{-}) =\{0\}.
$$
Let  $x\in \bigcap\limits_{n=0}^{\infty}U^{*n}(\mathcal{K}_{-})
=\{0\}.$ Let $n\geq 0,$ and $h \in \mathcal{H}$ notice that $x=
U^{*n}x_{n}$ for some $x_{n}\in \mathcal{K}_{-}.$  Then $\langle x,
U^{*n}h \rangle = \langle U^{*n}x_{n}, U^{*n}h\rangle = \langle
x_{n}, h\rangle = 0,$ and $\langle x, U^{m}h \rangle=0$ for all
$m\geq 0. $ It follows that $x=0.$ Hence $U^*: \mathcal{K}_{-}\to
\mathcal{K}_{-}$ is a shift with wandering subspace
$$\mathcal{H}_{-1}:= \mathcal{K}_{-}\ominus U^*\mathcal{K}_{-}.$$
Therefore $\mathcal{K}_{-}$ can be decomposed as
$$
\mathcal{K}_{-1}= \cdots \oplus U^{*2} \mathcal{H}_{-1} \oplus U^{*}
\mathcal{H}_{-1}\oplus \mathcal{H}_{-1}.
$$
Now, it is enough to see that $U(\mathcal{H}_{-1})\subseteq
\mathcal{K}_{+}.$ Consider $x\in \mathcal{H}_{-1},$ then for any
$y\in \mathcal{K}_{-},$ we see that $\langle Ux, y \rangle =\langle
x, U^*y\rangle =0,$ as $ x\perp U^{*}\mathcal{K}_{-}.$ This
completes the proof.
\end{proof}


\section{$\mathcal{C}_{A}$-class operators}

Inspired by  Sz.-Nagy's  dilation theorem for contractions, Berger
and \textcolor{black}{Stampfli} \cite{Ber Stam 67}, obtained the following interesting
theorem. Suppose $T\in \mathcal{B(H)}$. Then the numerical radius
$w(T) \leq 1$ if and only if the operator  sequence $\{
\frac{1}{2}T^{n},\; n \geq 1\}$ admits a unitary dilation.  This led
to the following definition. Fix $\rho >0$. Then a bounded operator
$T\in \mathcal{B(H)}$ is said to be in $\mathcal{C}_{\rho }$-class
if there is a unitary operator $U$ on some Hilbert space
$\mathcal{K} \supset \mathcal{H}$ such that $$\frac{T^{n}}{\rho} =
P_{\mathcal{H}}U^{n}\big|_{\mathcal{H}}$$ for $n \geq 1$ (see page 43
of \cite{SF10}). Thanks to Sz.-Nagy's dilation theorem,
  $\mathcal{C}_{1}$-class is  precisely the set of all contractions in $\mathcal{B(H)}$. Similarly from the
   \textcolor{black}{Berger-Stampfli's} Theorem \cite{Ber Stam 67}
 we know that $\mathcal{C}_{2}$-class is the set of all operators in $\mathcal{B(H)}$ whose numerical radius is less than equal
 to one. Now we recall a theorem from \cite{SF10} that  characterises  bounded linear operators of $\mathcal{C}_{\rho}$-class, where $\rho >0.$

 \begin{thm} \cite[Theorem 11.1]{SF10}
 Let $T \in \mathcal{B}(\mathcal{H})$ and $\rho >0.$ Then $T \in \mathcal{C}_{\rho}$ if and only if
 \begin{equation*}
 \Big( \frac{2}{\rho} - 1 \Big) \|zTh\|^{2} + 2 \Big( 1-\frac{1}{\rho}\Big) Re \big( \langle zTh,\; h\rangle\big)\; \leq \; \|h\|^{2},\; \text{for all}\; h \in \mathcal{H},\; |z| \leq 1.
 \end{equation*}
 Further, it is equivalent to satisfying the following condition:
 \begin{equation}\label{Corollary: equvallent criteria of Crho-class}
 \big( \rho -2 \big) \big\| (I-zT) h \big\|^{2} + 2 Re \big( \langle (I-zT)h,\; h\rangle\big) \geq 0,
 \end{equation}
 for all $h \in \mathcal{H},\; |z| \leq 1.$
 \end{thm}

M. A. Dritschel, H. J. Woerdeman \cite{DW96} have  developed a model
theory for the $\mathcal{C}_{2}$-class.  Over the past few decades,
there has been an extensive study of $\mathcal{C}_{\rho}$-class
operators. To mention some of them, we refer \cite{AT75, BS67, Ber
Stam 67, DMW99,  Hol71, Hok68, NF66} and references therein.
Generalizing this notion in a natural way   H. Langer  introduced
the $\mathcal{C}_{A}$-class. Let $A\in \mathcal{B(H)}$ be a positive
invertible operator. Then an operator $T\in \mathcal{B(H)}$ is said
to be of the $\mathcal{C}_{A}$-class if the  operator-valued sequence
$\{A_{n}\}_{n \geq 1}$, where
   \begin{align}\label{Equation: Toeplitz moment sequence}
   A_{n}:= A^{-\frac{1}{2}} T^{n}A^{-\frac{1}{2}},\text{ for } n\geq 1,
   \end{align}
   admits unitary dilation. The initial reference we could find for this notion is  \cite{SF10}.

\begin{thm}\label{Thm: CA class equivalent}
Let $A\in \mathcal{B(H)}$ be a positive invertible operator and the
operator $T\in \mathcal{B(H)}.$ Then the following are equivalent:
\begin{itemize}
\item [(i)] $T\in \mathcal{C}_{A}$-class.

\item[(ii)]For every $N\in \mathbb{N}$ and $c_{1}, \cdots , c_{N}\in \mathbb{C}$, we have
  \begin{equation}\label{Equation: ZetaA}
    \sum_{\ell,k=0}^{N}\overline{c_{\ell}}c_{k}\zeta_{A}(k-\ell) \geq 0 \text{~for each~} N\in \mathbb{N},
 \end{equation}
where $\zeta_{A}(n)$ is defined as follows:
\begin{align*}
\zeta_{A}(n):=\left\{ \begin{array}{ll} A^{-\frac{1}{2}}T^nA^{-\frac{1}{2}} & {\mbox if} ~n > 0;\\
                                   I &\mbox{if}  ~n =0;\\
                                   A^{-\frac{1}{2}}T^{*-n}A^{-\frac{1}{2}} & \mbox{if}~ n < 0.\\
                                   \end{array}\right.
\end{align*}
\item[(iii)] $T$ satisfies the following:
$$(I - zT)^{-1}+  (I-\overline{z}T^*)^{-1}+ A-2I \geq 0 \text{~~~for all~} \vert z \vert <1.$$
\item [(iv)]$T$ satisfies the following:
\begin{align}\label{Equation: Ca class}
\left\langle(I-zT)h, (A-2I)(I-zT)h \right\rangle+ 2{\rm Re} \langle
h, (I-zT)h \rangle \geq 0 \text{~~~for all~} \vert z \vert <1, h\in
\mathcal{H}.
\end{align}
\end{itemize}
     \end{thm}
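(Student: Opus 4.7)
The strategy is to apply Theorem \ref{Theorem: UnitaryDilation} to the operator sequence $\{A_{n}\}_{n \geq 0}$ defined by $A_{0} = I$ and $A_{n}=A^{-1/2}T^{n}A^{-1/2}$ for $n \geq 1$, and then translate the resulting conditions into the language of $T$ and $A$.

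For (i) $\Leftrightarrow$ (ii): this is immediate from Theorem \ref{Theorem: UnitaryDilation}. By definition, $T \in \mathcal{C}_{A}$-class means this very sequence admits a unitary dilation, and one checks that $\zeta_{A}(k-\ell)$ matches $A_{k-\ell}$ (with the convention $A_{-n}:=A_{n}^{*}$) in all three cases $k > \ell$, $k = \ell$, $k < \ell$. Thus (ii) here is exactly condition (ii) of Theorem \ref{Theorem: UnitaryDilation}.

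For (i) $\Leftrightarrow$ (iii), the plan is to route both conditions through the positivity of the Poisson kernel $P_{\mathbf{A}}(z)$, invoking Theorem \ref{Theorem: UnitaryDilation}(iii). A short computation using the identity $\sum_{n \geq 1}z^{n}T^{*n}=(I-zT^{*})^{-1}-I$ (which is valid when $\{\|T^{n}\|\}$ is bounded, as is automatic from (i) since the dilation gives $\|T^{n}\| \leq \|A\|$) yields
\[
A^{1/2} P_{\mathbf{A}}(z) A^{1/2}=A-2I+(I-zT^{*})^{-1}+(I-\bar z T)^{-1}.
\]
Separately, after polarizing $2\,\mathrm{Re}\langle h,(I-zT)h\rangle$, condition (iii) is equivalent to the operator inequality
\[
R(z)^{*}(A-2I)R(z)+R(z)^{*}+R(z)\geq 0, \qquad R(z):=I-zT.
\]
Conjugating this by $R(z)^{-1}$, when available, produces $(A-2I)+(I-zT)^{-1}+(I-\bar z T^{*})^{-1}\geq 0$, which is precisely $A^{1/2} P_{\mathbf{A}}(\bar z) A^{1/2}\geq 0$. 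Since $z\mapsto \bar z$ is a bijection of $\mathbb{D}$, the statement ``$P_{\mathbf{A}}(z)\geq 0$ for all $|z|<1$'' is the same as ``$P_{\mathbf{A}}(\bar z)\geq 0$ for all $|z|<1$'', and the equivalence (i) $\Leftrightarrow$ (iii) falls out of Theorem \ref{Theorem: UnitaryDilation}.

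The main obstacle is justifying the invertibility of $R(z)=I-zT$ on $\mathbb{D}$ in the direction (iii) $\Rightarrow$ (i): in that direction, one only has a pointwise inequality for each $h$ and no a priori control on $\|T^{n}\|$. The cleanest way around this is to work entirely with the polynomial operator inequality
\[
A-z(A-I)T-\bar z T^{*}(A-I)+|z|^{2}T^{*}(A-2I)T\geq 0,
\]
which is equivalent to (iii) and requires no inversion. One can then recover condition (ii) from this pointwise positivity on $\mathbb{D}$ by the same Fej\'er--Riesz / Poisson integration technique used in the proof of Theorem \ref{Theorem: UnitaryDilation}: integrating against trigonometric polynomials $\sum c_{k}e^{ik\theta}$ at $z=re^{i\theta}$ and letting $r\to 1^{-}$ produces precisely the Hermitian sums $\sum \bar c_{\ell}c_{k}\zeta_{A}(k-\ell)\geq 0$. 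This averaging step, and the passage to the boundary, is where I expect the real work to lie; once it is in place, the chain (iii) $\Rightarrow$ (ii) $\Rightarrow$ (i) closes, and combined with the forward direction the three conditions are equivalent.
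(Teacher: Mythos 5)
Your handling of (i)$\Leftrightarrow$(ii) and of (i)$\Rightarrow$(iii) coincides with the paper's: both directions are routed through Theorem \ref{Theorem: UnitaryDilation}, using the identity $A^{1/2}P_{\bf A}(\bar z)A^{1/2}=A-2I+(I-zT)^{-1}+(I-\bar zT^{*})^{-1}$ (legitimate in that direction, since the dilation gives $\|T^{n}\|\le\|A\|$ and hence $r(T)\le 1$) and then applying the resulting positive operator to the vector $(I-zT)h$. The problem is your repair of (iii)$\Rightarrow$(i). You correctly identify the obstacle --- one cannot a priori invert $I-zT$ or sum the Neumann series --- but the workaround you propose does not close it. The polynomial inequality $Q(z):=A-z(A-I)T-\bar zT^{*}(A-I)+|z|^{2}T^{*}(A-2I)T\ge 0$ is indeed equivalent to (iii), but for fixed $r$ the function $\theta\mapsto Q(re^{i\theta})$ is a trigonometric polynomial of degree one: its only nonzero Fourier coefficients sit at $l\in\{-1,0,1\}$ and equal $A+r^{2}T^{*}(A-2I)T$, $-r(A-I)T$ and $-rT^{*}(A-I)$. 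Integrating it against $|\sum_{k}c_{k}e^{ik\theta}|^{2}$ therefore yields a positivity statement involving only $I$, $T$ and $T^{*}$; it can never produce $\zeta_{A}(\pm n)$ for $n\ge 2$, because the higher powers $T^{n}$ simply do not occur in $Q$. They appear only after unwinding $Q(z)=(I-zT)^{*}\bigl(A-2I+(I-zT)^{-1}+(I-\bar zT^{*})^{-1}\bigr)(I-zT)$, which is exactly the inversion you set out to avoid. The Fej\'er--Riesz/Poisson averaging in the proof of Theorem \ref{Theorem: UnitaryDilation} works because the Fourier coefficients of $P_{\bf A}(re^{i\theta})$ are $r^{|l|}A_{l}$, i.e.\ the entire moment sequence; $Q$ carries no such information.

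The honest way to fill the gap is to show first that (iii) forces $r(T)\le 1$, after which the ``reverse computation'' is legitimate. For instance, pick $\lambda$ in the approximate point spectrum with $|\lambda|=r(T)$ and unit vectors $h_{n}$ with $(T-\lambda)h_{n}\to 0$; evaluating (iii) at $z=s\bar\lambda/|\lambda|^{2}$ (admissible for $1<s<|\lambda|$ if $|\lambda|>1$) gives $(s-1)\bigl((s-1)\langle h_{n},(A-2I)h_{n}\rangle-2\bigr)\ge o(1)$, and the left side is a fixed negative number once $s-1<2/\|A-2I\|$ --- a contradiction. With $r(T)\le 1$ in hand, $(I-zT)^{-1}=\sum_{n\ge 0}z^{n}T^{n}$ converges on $\mathbb{D}$, the substitution $h=(I-zT)^{-1}g$ converts (iii) into $P_{\bf A}(\bar z)\ge 0$, and Theorem \ref{Theorem: UnitaryDilation}(iii) delivers (i). To be fair, the paper disposes of this direction with the phrase ``by reverse computation,'' so it is silently relying on the same missing spectral-radius step; but your proposed substitute for that step is the one piece of your argument that genuinely fails.
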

\begin{proof}
In view of Theorem \ref{Theorem: UnitaryDilation}, it is immediate
to see that (i) and (ii) are equivalent. Now, \textcolor{black}{ we prove other implications below}.

  (i)$\implies$ (iii): Let ${\bf A}=  \{A_{n}\}_{n\geq0 }$,  where $A_{n}= A^{-\frac{1}{2}}T^nA^{-\frac{1}{2}}$ for $n\geq1$ and $A_{0}=I.$ Assume that  $T\in \mathcal{C}_{A}.$ Then by Theorem \ref{Theorem: UnitaryDilation}, $P_{\bf A}(\overline{z}) \geq0$ for all $z\in\mathbb{D}.$ In other words,
  \begin{align*}
  (I + zA^{-\frac{1}{2}}TA^{-\frac{1}{2}}+ &z^2A^{-\frac{1}{2}}T^2A^{-\frac{1}{2}}+ \hdots  \infty)\\
  &+  (I+\overline{z}A^{-\frac{1}{2}}T^*A^{-\frac{1}{2}}+ \overline{z}^2A^{-\frac{1}{2}}T^{*2}A^{-\frac{1}{2}}+ \hdots  \infty) -I \geq 0.
  \end{align*}
It follow that $$(I - zT)^{-1}+  (I-\overline{z}T^*)^{-1}+ A-2I \geq 0.$$
 (iii)$\implies$ (iv):
  Let $h\in \mathcal{H},$ then for all $z\in \mathbb{D},$ we have
  \begin{align*}
  \Big\langle   (I-zT)h, \big( (I - zT)^{-1}+ &(I-\overline{z}T^*)^{-1}\big)(I-zT)h \Big\rangle \\
  &+  \Big \langle (I - zT), (A-2I)(I-zT)h \Big\rangle \geq 0.
  \end{align*}
  Notice that  $$ \left\langle  (I-zT)h, \big( (I - zT)^{-1}+  (I-\overline{z}T^*)^{-1}\big)(I-zT)h \right\rangle= 2 Re\langle (I-zT)h,  h\rangle.$$ It implies that $$\left\langle(I-zT)h, (A-2I)(I-zT)h \right\rangle+ 2{\rm Re} \langle
(I-zT)h, h \rangle \geq 0 \text{~~~for all~} \vert z \vert <1, h\in
\mathcal{H}.$$

 \noindent (iv)$\implies$ (i): Assume that $$\left\langle(I-zT)h, (A-2I)(I-zT)h \right\rangle+ 2{\rm Re} \langle
(I-zT)h, h \rangle \geq 0 \text{~~~for all~} \vert z \vert <1, h\in
\mathcal{H}.$$ Then by reverse computation, we can show that $$P_{\bf
A}(\overline{z})\geq 0$$ for all $z\in \mathbb{D}.$ Hence by Theorem
\ref{Theorem: UnitaryDilation},  $T\in \mathcal{C}_{A}.$
 \end{proof}
One can see that Theorem 2.15 of \cite{Suen 98} obtains the equivalence of $(i)$ and $(iv)$ with the assumption that
 $A$ satisfies $ mI\leq A \leq MI$, for some $m, M >0$. However, our proof based on the idea of a Poisson kernel is different from the one given in \cite{Suen 98}. Furthermore, Theorem \ref{Thm: CA class equivalent} guarantees that the spectrum of $T$ is within the closed unit disc.
\begin{prop} Let $T\in \mathcal{C}_{A}.$ Then $$\sigma(T)\subseteq \overline{\mathbb{D}},$$  where $\sigma(T)$ denote the spectrum of $T$ and $\mathbb{D}$ denote the open unit disc of $\mathbb{C}.$
\end{prop}

\begin{proof} Suppose $T\in \mathcal{C}_{A}$ then $T$ satisfies  Equation \eqref{Equation: Ca class} from the Theorem \ref{Thm: CA class equivalent}.
 Assume that \textcolor{black}{$\sigma(T) \not\subset \overline{\mathbb{D}}$}. Then there exists a $z_{0}\in \mathbb{C}$
such that $|z_{0}|<1$ and $\frac{1}{z_{0}} \in \sigma_{app}(T)$ the
approximate point spectrum of $T$. So there exists a sequence
$\{h_{n}\}$ in $\mathcal{H}$ with $\Vert h_{n}\Vert=1$ such that
$$\lim\limits_{n\to \infty}(\frac{1}{z_{0}}I-T)h_{n}=0.$$ It
immediately follows that $$\lim\limits_{n\to
\infty}(I-z_{0}T)h_{n}=0, \lim\limits_{n\to \infty} \Vert
z_{0}Th_{n} \Vert=1, \text{ and }\lim\limits_{n\to \infty}\langle
z_{0}Th_{n},h_{n}\rangle=1.$$

Let us choose $r>0$ sufficiently small such that $r \vert z_{0}
\vert+ \vert z_{0} \vert<1.$ Then for all $h_{n}$, we have from
assumption that
\begin{align*}
0\leq &\big\langle(I-z_{0}T-rz_{0}T)h_{n}, (A-2I)(I-z_{0}T-rz_{0}T)h_{n} \big\rangle\\
&~~~~~~~~~~~~ + 2{\rm Re} \big\langle (I-z_{0}T-rz_{0}T)h_{n},h_{n} \big\rangle  \\
\leq &\Vert A-2I \Vert \big\langle(I-z_{0}T-rz_{0}T)h_{n}, (I-z_{0}T-rz_{0}T)h_{n} \big\rangle\\
&~~~~~~~~~~~~  + 2{\rm Re} \big\langle (I-z_{0}T-rz_{0}T)h_{n},h_{n} \big\rangle \\
\leq &\Vert A-2I \Vert\Vert (I-z_{0}T-rz_{0}T)h_{n}\Vert^2 + 2{\rm Re} \big\langle (I-z_{0}T-rz_{0}T)h_{n},h_{n} \big\rangle \\
\leq &\Vert A-2I \Vert\Vert (I-z_{0}T)h_{n}-rz_{0}Th_{n}\Vert^2 \\
&~~~~~~~~~~~~ + 2{\rm Re} \big\langle (I-z_{0}T)h_{n},h_{n} \rangle -2r {\rm Re} \big\langle
z_{0}Th_{n},h_{n} \big\rangle
    \end{align*}
    Taking limit over $n$ tends to infinity, we have $$0\leq \Vert A-2I\Vert r^2-2r.$$ It implies that $$2 \leq \Vert A-2I\Vert r$$ for sufficiently small $r>0.$
    Which is a contradiction, thus $\sigma(T)\subseteq \overline{\mathbb{D}}.$
\end{proof}

In the next result, employing standard techniques, we provide another
necessary and sufficient criterion in terms of positive maps. 
\begin{thm}\label{Theorem: C_A class and CP map}
Let $A, T \in \mathcal{B(H)}$ where $A$ is a positive invertible operator and let $\mathcal{S}\subseteq C(\mathbb{T})$ be the operator system defined by
\begin{equation*}
\mathcal{S} = \big\{ p(e^{i\theta}) + \overline{q(e^{i\theta})}:\; p, q \; \text{are polynomials}\big\}.
\end{equation*}
Then the map $\varphi_{A}:\mathcal{S}\subseteq C(\mathbb{T})\to
\mathcal{B(H)}$ defined by
\begin{align*}
\varphi_{A}\big(p(e^{i\theta})+ \overline{q(e^{i\theta})}\big):=
p(T)+ q(T)^*+ (A-I) \big(p(0)+ \overline{q(0)}\big)I.
\end{align*}
is a positive map if and only if the operator $T$
is in $\mathcal{C}_{A}$-class.
  \end{thm}
\begin{proof}
 Let $T\in {\mathcal{C}}_{A}.$ Now, let
$f(e^{i\theta})=\sum\limits_{n=-N}^N a_{n}e^{i n\theta}\in
\mathcal{S}$ be strictly positive. Then applying Riesz-Fejer
theorem, we see that $f(e^{i\theta})= \sum\limits_{0\leq \ell,k\leq
N} \overline{c_{\ell}}c_{k} e^{i(k-\ell)\theta}$  for some $c_{0},
\hdots, c_{N}\in \mathbb{C}.$ This implies that
\begin{align*}
\varphi_{A}(f(e^{i\theta}))&= \sum\limits_{\ell=0}^{N} \vert c_{\ell}\vert^2+ \sum\limits_{0\leq \ell\neq k\leq N}\overline{c_{\ell}}c_{k} {T_{(k-\ell)}}+(A-I)\sum\limits_{\ell=0}^{N} \vert c_{\ell}\vert^2    \\
&=\sum\limits_{\ell=0}^{N} \vert c_{\ell}\vert^2A+ \sum_{0\leq l\neq k\leq N}\overline{c_{\ell}}c_{k}{T_{(k-\ell)}}\\
&=A^{\frac{1}{2}}\big(\sum\limits_{\ell=0}^{N} \vert c_{\ell}\vert^2 I+ \sum_{0\leq \ell\neq k\leq N}\overline{c_{\ell}}c_{k} A^{-\frac{1}{2}}{T_{(k-\ell)}} A^{-\frac{1}{2}}\big)A^{\frac{1}{2}}\\
&=A^{\frac{1}{2}}\Big
(\sum_{\ell,k=0}^{N}\overline{c_{\ell}}c_{k}\zeta_{A}(k-\ell)\Big)A^{\frac{1}{2}}.
\end{align*}
As $T\in \mathcal{C}_{A},$ by Theorem \ref{Thm: CA class
equivalent},  we have
$\sum\limits_{\ell,k=0}^{N}\overline{c_{\ell}}c_{k}\zeta_{A}(k-\ell)\geq
0.$ Therefore $\varphi_{A}(f(z))\geq 0$ whenever $f(z)$ is strictly
positively element in $\mathcal{S}.$ Now, let $g\in \mathcal{S}$ be
positive. Then $g+\varepsilon \cdot 1$ is strictly positive for any
 $\epsilon >0.$ Then $\varphi_{A}(g)+ \varepsilon A =\varphi_{A}(g+ \varepsilon \cdot 1)\geq 0$ for all $\varepsilon>0.$ Taking limit $\epsilon\to 0,$ we obtain $\varphi_{A}(g)\geq 0.$ Hence the map $\varphi_{A}$ is positive.

 Conversely, let $\varphi_{A}$ be a positive map. In view of Theorem \ref{Thm: CA class equivalent}, it is enough to prove that $$\sum\limits_{\ell,k=0}^{N}\overline{c_{\ell}}c_{k}\zeta_{A}(k-\ell)\geq 0.$$
 Notice that
$$ \sum\limits_{\ell,k=0}^{N}\overline{c_{\ell}}c_{k}\zeta_{A}(k-l)= A^{-\frac{1}{2}} \varphi_{A}\Big(\sum\limits_{0\leq \ell,k\leq N} \overline{c_{\ell}}c_{k} e^{i(k-\ell)\theta}\Big)A^{-\frac{1}{2}}.$$
Since $\varphi_{A}$ is positive and $\sum\limits_{0\leq \ell,k\leq
N} \overline{c_{\ell}}c_{k} e^{i(k-\ell)\theta}\geq 0,$ we have that $$\sum\limits_{\ell,k=0}^{N}\overline{c_{\ell}}c_{k}\zeta_{A}(k-l) \geq 0.$$ This completes the proof.
  \end{proof}
\textcolor{black}{ As an immediate application of  Theorem \ref{Theorem: C_A class and CP
map},  we recover a result of V. Istr\u{a}\c{t}escu
\cite{Istratescu68} that if $0 \leq B\leq A,$ and $A, B$ are
invertible, then $\mathcal{C}_{B}\subseteq \mathcal{C}_{A}.$}
\begin{thm}\label{Prof: commutant of A in CA class}
 Let $A, T\in \mathcal{B(H)}$ such that $TA=AT.$ Then following are \textcolor{black}{equivalent:}
 \begin{enumerate}
\item  $T\in \mathcal{C}_{A}.$
\item For all $ z\in \mathbb{D},$ the following condition holds:
$$A -(A-I)zT-(A-I)\overline{z}T^* + (A-2I)\vert
z\vert^2T^*T\geq 0.$$
\item For all $ z\in \mathbb{D},$ the following condition holds:
 \begin{align*}
 \vert z\vert^2 T^*T\leq \big(A-(A-I)zT\big)^*\big(A-(A-I)zT\big).
 \end{align*}
 \end{enumerate}
\end{thm}

\begin{proof}
$(1) \iff (2)$
Using (iv) of Theorem \ref{Thm: CA class equivalent}, we notice that $T\in \mathcal{C}_{A}$ if and only if
$$(I-zT)^*(A-2I)(I-zT)+ (I-zT)+ (I-zT)^*\geq 0$$ for all $z\in \mathbb{D}.$ Since $TA=AT,$ the result follows immediately.

$(2)\iff (3)$
First, we observe the following computation.
\begin{align*}
 &A^{-\frac{1}{2}}\Big(A -(A-I)zT-(A-I)\overline{z}T^* + (A-2I)\vert
z\vert^2T^*T\Big)A^{-\frac{1}{2}}\\
=&I- A^{-1}(A-I)zT-A^{-1}(A-1)\overline{z}T^* + A^{-1}(A-2I)\vert z\vert^2 T^*T\\
=& \Big(I-A^{-1}(A-I)zT\Big)^{*}\Big(I-A^{-1}(A-I)zT\Big)- A^{-2}\vert z\vert^2T^*T\\
=& A^{-1}\Big(A-(A-I)zT\big)^{*}\big(A-(A-I)zT\Big)A^{-1}-A^{-2}\vert z\vert^2T^*T\\
=& A^{-1} \Big(\big(A-(A-I)zT\big)^{*}\big(A-(A-I)zT\big)-\vert z\vert^2T^*T\Big)A^{-1}.
\end{align*}
It follows that
\begin{align*}
A -(A-I)zT-(A-I)\overline{z}T^* + (A-2I)\vert
z\vert^2T^*T\geq 0 \text{ for all } z\in \mathbb{D}.
\end{align*}
if and only if
 \begin{align*}
 \vert z\vert^2 T^*T\leq \big(A-(A-I)zT\big)^*\big(A-(A-I)zT\big) \text{ for all } z\in \mathbb{D}.
 \end{align*}
\end{proof}
\begin{rmk}
  \textcolor{black}{From }Theorem \ref{Prof: commutant of A in CA class}, we may recover an important characterization (see \cite{DW96} for details) of \textcolor{black}{numerical range contractions, namely that}
$T\in \mathcal{C}_{2}$ if and only if $${\rm Re}(e^{i \theta}T) \leq I, \; \textcolor{black}{\text{for all}\; \theta}.$$
\end{rmk}
\section{Concrete isometric  and unitary dilations for a subclass of $\mathcal{C}_A$-class operators}

Let $A\in \mathcal{B(H)}$ be a positive invertible operator. Recall
that an operator $T$ is in $\mathcal{C}_A$-class means that the
sequence $\{A_{n}\}_{n \geq 1}$ given by
\begin{align}
A_{n}:= A^{-\frac{1}{2}} T^{n}A^{-\frac{1}{2}}, n\geq 1
\end{align}
admits unitary dilation. We do not know how to write down the
dilation of operators of the $\mathcal{C}_A$-class in general. Here, we do it for
a subclass.

First, we write an isometric dilation for 
$\{A^{-\frac{1}{2}}T^nA^{-\frac{1}{2}}\}_{n\geq 1}$. Let $A, C\in
\mathcal{B(H)}$  be \textcolor{black}{a commuting pair} such that $A\geq 0$  with $\Vert
C\Vert \leq 1.$ \textcolor{black}{Then $I+ A(A-2I)C^*C\geq 0$ as $C$ is a contraction. Assume that $$I+
A(A-2I)C^*C= I-C^*C+ (A-I)^2C^*C$$ is invertible and take 
$$T= A[I+ A(A-2I)C^*C]^{-\frac{1}{2}}(I-C^*C)^{\frac{1}{2}}C.$$
 In such a case, we will see that $T$ is in
$\mathcal{C}_A$-class and we can explicitly write down isometric and
unitary dilations of $T$.}

Consider $A,C$ as above. It is convenient to have some notation.
Take $$ B= (I+ A(A-2I)C^*C)^{-\frac{1}{2}}, D=
(I-C^*C)^{\frac{1}{2}}, D_{*}= (I-CC^*)^\frac{1}{2}.$$ Therefore,
$$T=ABDC.$$ Now we take the following sequence $\{T_{n}\}_{n\geq 0} $
of bounded operators defined by
 \begin{align}\label{Eq: CA class}
 T_{0} = I,\; \text{and}\; T_{n}=A^{-\frac{1}{2}} T^{n}A^{-\frac{1}{2}}= A^{n-1}(BDC)^n \text{ for all } n \geq 1.
 \end{align}
Our aim is to show that the sequence $\{T_{n}\}_{n\geq 0}$ admits an
isometric dilation. In other words, there exists an isometry $V$ on
some Hilbert space $\mathcal{K}\supseteq  \mathcal{H}$ such that
$$T_{n}= P_{\mathcal{H}} V^{n}|_{\mathcal{H}}\text{ for all } n\geq 0.$$ In
addition, we want to find a $V$ with explicit block structure.

We observe that since $A$ is positive and commutes with $C$, it
commutes also with $C^*$. It follows that $A$ commutes with also $B, D,
D_*$. Consequently, we get $BD=DB$. On the other hand, we see that
\begin{equation*}
D^{2}_{\ast}C = (I-CC^{\ast})C = C(I-C^{\ast}C) = CD^{2}.
\end{equation*}
As both $D$ and $D_{\ast}$ are positive operators, $D_{\ast}C = CD.$
Furthermore,
\begin{align*}
B^2\big(&D^2+ (A-I)^2C^*C\big) \\
&= \big(I+A(A-2I)C^{\ast}C\big)^{-1}\big( I-C^{\ast}C + A(A-I)C^{\ast}C- (A-I)C^{\ast}C\big)\\
&=  \big(I+A(A-2I)C^{\ast}C \big)^{-1} \big( I-A(A-I)C^{\ast}C-AC^{\ast}C\big)\\
&= (I+A(A-2I)C^{\ast}C)^{-1} (I+A(A-2I)C^{\ast}C)\\
&= I.
\end{align*}
Therefore,
\begin{align}\label{Eq: BDC expression}
(BD)^2=I- B^2(A-I)^2C^*C.
\end{align}
We first obtain a partial isometric dilation on $\mathcal{H}\oplus
\mathcal{H}.$
\begin{lemma}\label{basic lemma}
Define $R$  on $\mathcal{H}\oplus \mathcal{H}$ by:
\begin{align}
R=
\begin{bmatrix}
BDC       &BDD_{*}\\
(A-I)CBC  &(A-I)CBD_{*}
\end{bmatrix}
\end{align}
Then $R$ is a partial isometry and
$$T_{n}= P_{\mathcal{H}}R^{n}P_{\mathcal{H}} \text{ for all } n\geq 1.$$
\end{lemma}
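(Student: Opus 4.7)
The approach is to exhibit $R$ as the product of a column and a row: set
$$X = \begin{bmatrix} BD \\ (A-I)CB \end{bmatrix} \colon \mathcal{H} \to \mathcal{H} \oplus \mathcal{H}, \qquad Y = \begin{bmatrix} C & D_* \end{bmatrix} \colon \mathcal{H} \oplus \mathcal{H} \to \mathcal{H},$$
and observe that multiplying out $XY$ reproduces the four blocks of $R$ exactly. Before touching the two claims, I would collect the commutation relations that drive every subsequent computation: since $A$ is self-adjoint and commutes with $C$, it also commutes with $C^*$, and hence with $C^*C$, $CC^*$, and therefore (by continuous functional calculus) with $B$, $D$, and $D_*$; both $B$ and $D$ are continuous functions of $C^*C$, so $BD = DB$; and the standard intertwining identity $D_* C = CD$ follows from $C^*(I-CC^*) = (I - C^*C)C^*$ by taking positive square roots.

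For the partial-isometry assertion I would show $X^*X = I$ and $YY^* = I$; together these give $R^*R = Y^*(X^*X)Y = Y^*Y$, and $Y^*Y$ is automatically a projection because $Y$ is a coisometry. The identity $YY^* = CC^* + D_*^2 = CC^* + (I - CC^*) = I$ is immediate. For the first, the self-adjointness of $A,B,D$ and the commutation relations give
$$X^*X = (BD)^2 + BC^*(A-I)^2 CB = (BD)^2 + (A-I)^2 B^2 C^*C,$$
which equals $I$ by the identity $(BD)^2 = I - B^2(A-I)^2 C^*C$ already established in \eqref{Eq: BDC expression}.

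For the dilation identity I would use the factorization $R^n = X(YX)^{n-1}Y$. A short calculation gives
$$YX = CBD + D_*(A-I)CB = CBD + (A-I)CDB = CBD + (A-I)CBD = A \cdot CBD,$$
where I successively use that $A$ commutes with $D_*$, then $D_*C = CD$, then $DB = BD$. Since $A$ commutes with $B$, $C$, and $D$, one then has $(YX)^{n-1} = A^{n-1}(CBD)^{n-1}$, and the $(1,1)$-block of $R^n$ is
$$BD \cdot A^{n-1}(CBD)^{n-1} \cdot C = A^{n-1} \cdot BD(CBD)^{n-1}C = A^{n-1}(BDC)^n,$$
which is precisely $T_n$ by the definition in \eqref{Eq: CA class}. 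Hence $T_n = P_{\mathcal{H}} R^n P_{\mathcal{H}}$ for every $n \geq 1$.

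The plan is almost entirely bookkeeping; the only point where one has to be slightly careful is the identity $D_*C = CD$, but this is standard. Everything else is forced by the hypothesis $[A,C]=0$ together with the defining identity for $B$, and the remarkably clean factorization $R = XY$ ensures that both the partial-isometry property and the powers of $R$ reduce to one-line computations.
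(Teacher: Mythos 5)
Your proof is correct, and it takes a genuinely different route from the paper's. The paper verifies the partial-isometry claim by computing the four blocks of $R^*R$ directly and then checking by hand that $(R^*R)^2=R^*R$; for the dilation identity it computes the $(1,1)$ block of $R^2$ explicitly and then asserts the general case ``by induction'' without writing out the inductive step. Your factorization $R=XY$ with $X=\begin{bmatrix} BD \\ (A-I)CB\end{bmatrix}$ and $Y=\begin{bmatrix} C & D_*\end{bmatrix}$ replaces both computations by structural observations: once $X^*X=I$ (which is exactly the paper's identity $(BD)^2=I-B^2(A-I)^2C^*C$ in disguise, so you correctly cite Equation \eqref{Eq: BDC expression}) and $YY^*=CC^*+D_*^2=I$ are checked, $R^*R=Y^*Y$ is automatically a projection, and $R^n=X(YX)^{n-1}Y$ together with $YX=A\cdot CBD$ gives the $(1,1)$ block of every power in closed form, so the induction the paper waves at becomes a one-line identity $BD(CBD)^{n-1}C=(BDC)^n$. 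All the commutation relations you invoke ($A$ with $C^*$, $B$, $D$, $D_*$; $BD=DB$; $D_*C=CD$) are exactly those the paper establishes just before the lemma, and your uses of them (e.g.\ moving $(A-I)^2$ and $C^*C$ past $B$ rather than $C^*$ past $B$ alone) are all legitimate. The only thing your argument gives up is the explicit display of the matrix $R^*R=\begin{bmatrix} C^*C & C^*D_*\\ D_*C & D_*^2\end{bmatrix}$, which the paper reuses afterwards to write down the defect operator $D_R=I-R^*R$; that is recovered instantly from $R^*R=Y^*Y$, so nothing is lost.
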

\begin{proof}

Making use of the commutation relations observed above, by direct
computation we see that
$$R^*R=
\begin{bmatrix}
C^*C &C^*D_{*}\\
D_{*}C   &D^2_{*}
\end{bmatrix}.
$$
Clearly, $R^*R$ is a self-adjoint operator and
\begin{align*}
  (R^*R)^2&=   \begin{bmatrix}
C^*C &C^*D_{*}\\
D_{*}C   &D^2_{*}
\end{bmatrix}\begin{bmatrix}
C^*C &C^*D_{*}\\
D_{*}C   &D^2_{*}
\end{bmatrix}\\
     &=
     \begin{bmatrix}
         (C^*C)^2+ C^*D^2_{*}C            &C^*CC^*D_{*}+ C^*D_{*}D_{*}^2\\
         D_{*}CC^*C+D^2_{*}D_{*}C   &D_{*}CC^*D_{*}+D_{*}^4
     \end{bmatrix}\\
    &= \begin{bmatrix}
         C^*C+ C^*(1-CC^*)C            &C^*CC^*D_{*}+ C^*(1-CC^*)D_{*}\\
         D_{*}CC^*C+D_{*}(1-CC^*)C   &D_{*}^2CC^*+D_{*}^2(1-CC^*)
     \end{bmatrix}\\
     &=
      \begin{bmatrix}
C^*C &C^*D_{*}\\
D_{*}C   &D^2_{*}
\end{bmatrix}.\end{align*}
Therefore $R^*R$ is a projection and $R$ is a partial isometry. Next,
we compute $R^2$ as follows:
\begin{align*}
R^2&= \begin{bmatrix}
BDC       &BDD_{*}\\
(A-1)CBC  &(A-1)CBD_{*}
\end{bmatrix}
\begin{bmatrix}
BDC       &BDD_{*}\\
(A-1)CBC  &(A-1)CBD_{*}
\end{bmatrix}\\
&= \begin{bmatrix}
(BDC)^2 +(A-1)BDD_{*}CBC        &*\\
(A-1)CBCBDC +(A-1)^2CBD_{*}CBC &*
\end{bmatrix}\\
&=\begin{bmatrix}
(BDC)^2 +(A-1)BDCDCBC        &*\\
(A-1)CBCBDC +(A-1)^2CBCDBC &*
\end{bmatrix}\\
&=\begin{bmatrix}
A(BDC)^2         &*\\
A(A-1)CBCBDC  &*
\end{bmatrix}.
\end{align*}
Therefore, $P_{\mathcal{H}}R^2|P_{\mathcal{H}}= A(BDC)^2=
A^{-\frac{1}{2}}T^{2}A^{-\frac{1}{2}}.$ Hence by induction, we conclude
that
$$P_{\mathcal{H}}R^n|_{\mathcal{H}}=A^{-\frac{1}{2}}T^{n}A^{-\frac{1}{2}}.$$
\end{proof}
Let $D_{R}=(1-R^*R)^{\frac{1}{2}}$ be the defect operator \textcolor{black}{associated to}
the partial isometry. Since $(1-R^*R)$ is a projection,
$(1-R^*R)^{\frac{1}{2}}=(1-R^*R).$ Therefore, $D_{R}=
\begin{bmatrix}
    D^2 &-C^*D_{*}\\
    -D_{*}C& CC^*
\end{bmatrix}.$
\begin{rmk}\label{Rmk: from Schaffer construction} Using the Sch\"{a}ffer construction of
 $R,$ and Proposition \ref{basic lemma}, we can provide a direct
 explicit form of isometric dilation of the moment sequence $\{A^{-\frac{1}{2}}T^{n}A^{-\frac{1}{2}}\}$
  associated  to the $\mathcal{C}_{A}$-class.
 Let $\tilde{V}: \ell^{2}(\mathcal{H}\oplus \mathcal{H})\to  \ell^{2}(\mathcal{H}\oplus \mathcal{H})$ be the isometry given by
 \[
\tilde{V}= \left[\begin{array}{@{}c|ccccccccc@{}}
     R &0    &0    &0 &0    & \hdots\\
    D_R &0    &0    &0&0    & \hdots\\
\hline
  0                                            &I                   &0               &0           &0    & \hdots\\

  0                                            &0                   &I               &0          &0    & \hdots\\
  0                                            &0                   &0               & I          &0     &\hdots\\
  \vdots                           & \vdots          &\vdots       &\vdots         &\ddots& \ddots\\
\end{array}\right],
\]
Recalling  Sch\"{a}ffer construction of $R$, it is clear that
$P_{\mathcal{H}}\tilde{V}^n|_{\mathcal{H}}=
P_{\mathcal{H}}R^{n}|_{\mathcal{H}}=A^{-\frac{1}{2}}T^{n}A^{-\frac{1}{2}}.$
\end{rmk}
\noindent Here is an alternative construction of an isometric dilation that is simpler to look at.
\begin{thm} \label{thm: explicit isometric A-dilation}
Take $\mathcal{K}= \mathcal{H}^{\oplus 3}\oplus
\ell^{2}(\mathcal{H}).$ Consider $V$ on $\mathcal{K}$ defined by
\[
V= \left[\begin{array}{@{}ccc|cccccccc@{}}
     BDC         & BDD_{*} &0    &0    &0 &0    & \hdots\\
    (A-I)CBC         &(A-I)CBD_{*} &0    &0    &0&0    & \hdots\\
    D               &-C^*                  &0    &0      &0 &0    & \hdots\\
\hline
  0                       &0                      &I                   &0               &0           &0    & \hdots\\

  0                      &0                      &0                   &I               &0          &0    & \hdots\\
  0                      &0                      &0                   &0               & I          &0     &\hdots\\
  \vdots               &  \vdots            & \vdots          &\vdots       &\vdots         &\ddots& \ddots\\
\end{array}\right].
\]
Then $V$ is an isometric dilation of $\{A_n\}_{ n\geq 0}.$
\end{thm}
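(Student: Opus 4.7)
The plan is to exploit two structural features visible from the block matrix of $V$: the first two columns form an isometric map $V_{0} : \mathcal{H}\oplus \mathcal{H}\to \mathcal{H}^{\oplus 3}$ whose top $2\times 2$ block is precisely the partial isometry $R$ of Lemma \ref{basic lemma}, and with respect to the splitting $\mathcal{K} = \mathcal{M}\oplus \mathcal{M}^{\perp}$, where $\mathcal{M}$ denotes the first two copies of $\mathcal{H}$, the operator $V$ is block lower-triangular with upper-left corner $R$. Together these reduce the dilation identity $P_{\mathcal{H}}V^{n}|_{\mathcal{H}} = T_{n}$ to the identity for $R$ already established in Lemma \ref{basic lemma}.

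Writing $V_{0} = \binom{R}{W}$ where $W(h_{0},h_{1}) := Dh_{0} - C^{\ast}h_{1}$ is the third row of $V$, a direct block computation yields
\[ W^{\ast}W = \begin{pmatrix} D^{2} & -DC^{\ast}\\ -CD & CC^{\ast}\end{pmatrix}, \qquad I - R^{\ast}R = \begin{pmatrix} D^{2} & -C^{\ast}D_{\ast}\\ -D_{\ast}C & CC^{\ast}\end{pmatrix}, \]
and these coincide thanks to the standard intertwining $CD = D_{\ast}C$ (whence also $DC^{\ast} = C^{\ast}D_{\ast}$). Hence $V_{0}^{\ast}V_{0} = R^{\ast}R + W^{\ast}W = I$. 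The remaining columns of $V$ form a mutually orthonormal family of copies of the identity at successively shifted positions, each orthogonal to the range of $V_{0}$, so $V^{\ast}V = I$ on all of $\mathcal{K}$.

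For the block triangularity, the matrix shows $V(\mathcal{M})\subseteq \mathcal{M}\oplus \mathcal{H}_{3}$ with $\mathcal{M}$-component equal to $R$, while $V(\mathcal{M}^{\perp})\subseteq \mathcal{M}^{\perp}$ (it acts on $\mathcal{H}_{3}\oplus \mathcal{H}_{4}\oplus \cdots$ as the pure unilateral shift). Hence $V$ is block lower-triangular with upper-left block $R$, so $V^{n}$ is block lower-triangular with upper-left block $R^{n}$, giving $P_{\mathcal{M}}V^{n}|_{\mathcal{M}} = R^{n}$. Since $\mathcal{H}$ sits inside $\mathcal{M}$ as the first copy, this yields $P_{\mathcal{H}}V^{n}|_{\mathcal{H}} = P_{\mathcal{H}}R^{n}|_{\mathcal{H}}$, which equals $T_{n}$ for $n\geq 1$ by Lemma \ref{basic lemma} and equals $I = T_{0}$ for $n = 0$. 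The only non-routine point is the identity $W^{\ast}W = I - R^{\ast}R$, whose verification rests on the intertwining $CD = D_{\ast}C$ together with the identity $B^{2}(D^{2} + (A-I)^{2}C^{\ast}C) = I$ proved just before Lemma \ref{basic lemma}; everything else is immediate from the block form of $V$.
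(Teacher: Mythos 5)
Your proposal is correct and takes essentially the same approach as the paper: the paper's entire proof consists of the observation that $I-R^{\ast}R$ factors as $\left[\begin{smallmatrix}D\\-C\end{smallmatrix}\right]\left[\begin{smallmatrix}D & -C^{\ast}\end{smallmatrix}\right]$, which is exactly your identity $W^{\ast}W=I-R^{\ast}R$ (via $CD=D_{\ast}C$), together with the remark that the dilation property follows from Lemma \ref{basic lemma}. Your block lower-triangularity argument for $P_{\mathcal{M}}V^{n}|_{\mathcal{M}}=R^{n}$ merely spells out details the paper leaves implicit.
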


\begin{proof} The dilation property follows from the Lemma
\ref{basic lemma}. The isometric property of $V$ is clear once we
observe that
$$D_R= (I-R^*R)= \left[ \begin{array}{c}D\\-C\end{array}\right] \left[\begin{array}{cc}
D& -C^*\end{array}\right].$$
\end{proof}
In the last theorem there is no claim of minimality. However, Theorem \ref{thm: explicit isometric A-dilation} \textcolor{black}{provides one step more than} Remark \ref{Rmk: from Schaffer construction} to find minimal dilation.
To obtain the
minimal isometric dilation we need to identify  the minimal dilation
space $\bigvee\limits_{n=0}^{\infty} V^{n}\mathcal{H}.$ This we do
here as a  next proposition using Theorem \ref{thm: explicit isometric A-dilation}.
\begin{prop}\label{prop: explicit minimal dilation space} Let $V$ be the minimal isometric dilation defined in
 Theorem \ref{thm: explicit isometric A-dilation}. Then the minimal dilation space $\bigvee\limits_{n=0}^{\infty}V^n\mathcal{H}$ is of the form
$$
\bigvee\limits_{n=0}^{\infty}V^n\mathcal{H}
=\bigoplus_{n=0}^{\infty} \mathcal{H}_{n},
$$
where $\mathcal{H}_{0}: = \mathcal{H}, \mathcal{H}_{1}:=
V\mathcal{H}\ominus \mathcal{H}.$ Moreover,
$$\mathcal{H}_{1}=\{ ( 0\oplus (A-I)CBCh \oplus Dh \oplus \cdots ):
h\in \mathcal{H}\}$$ and
\begin{align*} \mathcal{H}_{n}&:= V^n\mathcal{H}\ominus \bigvee
\limits_{m=0}^{n-1} V^m\mathcal{H}\\
& = \Big\{ (0\oplus  (n ~\mbox{copies}~)
\oplus 0 \oplus (I-A)BCPh\oplus Dh \oplus 0 \cdots ):
h\in\mathcal{H}\Big\}
\end{align*}
for $n\geq 2,$ where $P$ is the projection onto
$\ker(I-A)BDC.$
\end{prop}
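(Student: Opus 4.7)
The plan is to compute the subspaces $V^n\mathcal{H}$ explicitly from the block form of $V$ and then extract $\mathcal{H}_n$ as the orthogonal complement of $\mathcal{H}_{(n-1)]}$ inside $\mathcal{H}_{n]}$. The key structural observation is that on coordinates $j \geq 3$ of $\mathcal{K} = \mathcal{H}^{\oplus 3} \oplus \ell^2(\mathcal{H})$ the operator $V$ acts as a simple forward shift $(Vk)_j = k_{j-1}$, while positions $0, 1, 2$ are mixed by the top-left $3\times 3$ block.

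For the base case $n = 1$, I would apply the matrix $V$ directly to $h \in \mathcal{H}$ viewed as $(h, 0, 0, \ldots) \in \mathcal{K}$, obtaining $Vh = (BDCh, (A-I)CBCh, Dh, 0, \ldots)$. Since $\mathcal{H}$ embeds in the first summand, the orthogonal projection onto $\mathcal{H}^{\perp}$ kills only the first coordinate, so $\mathcal{H}_1 = (I - P_{\mathcal{H}})(V\mathcal{H})$ produces exactly the claimed form.

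For $n \geq 2$, I would proceed by induction. First, the shift property on positions $\geq 3$, together with an inductive description of positions $0, 1, 2$, yields $V^n h = (a_n, b_n, c_n, c_{n-1}, \ldots, c_2, Dh, 0, \ldots)$, where $Dh$ sits at the highest active position. Since every element of $V^m\mathcal{H}$ with $m < n$ is supported strictly below this highest position, the top coordinate of any element of $\mathcal{H}_n$ must already be of the form $Dh$ for some $h \in \mathcal{H}$. To determine the next-highest entry, I would pair a trial element $V^n h - V^{n-1} h'$ against $V^{n-1}\mathcal{H}$ and use the orthogonality constraint (together with the known inductive form of $\mathcal{H}_{n-1}$) to solve for $h'$. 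Iterating this Gram--Schmidt-like cancellation through the remaining levels $\mathcal{H}_{n-2}, \ldots, \mathcal{H}_0$ should produce an element whose only nontrivial entries lie at the top two active positions and match the stated formula, with the projection $P$ onto $\ker((I-A)BDC)$ emerging from the final consistency condition. The direct sum decomposition is then automatic: mutual orthogonality of the $\mathcal{H}_n$ follows from the fact that their highest supports are disjoint, and their closed linear span exhausts $\bigvee_n V^n\mathcal{H}$ because each $V^n h$ reassembles from the Gram--Schmidt contributions.

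I expect the main obstacle to be the Gram--Schmidt step that extracts the projection $P$ onto $\ker((I-A)BDC)$. The recursion has to be carried out using the delicate commutation relations $BD = DB$, $D_*C = CD$, and the fact that $A$ commutes with each of $B, C, C^*, D, D_*$ but $B$ does not commute with $C$, together with the identity $(BD)^2 = I - B^2(A-I)^2 C^*C$ derived in the preceding material. Organising this bookkeeping so that precisely the kernel of $(I-A)BDC$ appears, rather than some other algebraic combination, is the subtle point and likely requires recognising $P$ as the natural object parametrising the redundancy in the map $h \mapsto (I-A)BDC h$ that governs the last orthogonality equation.
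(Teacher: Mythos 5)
Your overall strategy is the same as the paper's: compute $V^n h$ from the block form, use the shift on the tail coordinates to place $Dh$ at the top active position, and peel off the components lying in $\mathcal{H}_0,\dots,\mathcal{H}_{n-1}$ by successive orthogonalization. The $n=1$ case is handled correctly. But for $n\geq 2$ the proposal stops exactly where the proof actually lives, and what you defer to as ``the final consistency condition'' is not one condition but two separate computations, neither of which is sketched. First, you must identify the residual explicitly: writing $V^2h$ as a term in $\mathcal{H}$ (namely $A(BDC)^2h$ in coordinate $0$) plus the element of $\mathcal{H}_1$ attached to the vector $ABDCh$, the leftover is $\big(0,0,(I-A)BCh,Dh,0,\dots\big)$, and obtaining the entry $(I-A)BCh$ requires the identity $DBDC+C^*CBC=B(D^2+C^*C)C=BC$ --- i.e.\ precisely the commutation relations you list but never deploy. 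Second, the projection $P$ does not ``emerge'' from a Gram--Schmidt recursion; it enters by splitting $h=Ph+(I-P)h$ in this residual and proving two distinct facts: (a) $\big(0,0,(I-A)BCPh,Dh,0,\dots\big)$ is orthogonal to $\mathcal{H}_1$, because pairing the coordinate-$2$ entries gives $\langle Dg,(I-A)BCPh\rangle=\langle g,(I-A)BDCPh\rangle=0$; and (b) $\big(0,0,(I-A)BC(I-P)h,0,\dots\big)$ already lies in $\overline{\mathcal{H}_1}$, which the paper establishes by showing that any vector of this form orthogonal to $\mathcal{H}_1$ forces $(I-A)BDCh=0$, hence $(I-P)h=0$. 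Step (b) is the genuinely delicate point and is entirely absent from your plan.

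A smaller but real error: mutual orthogonality of the $\mathcal{H}_n$ does not follow from ``disjoint highest supports.'' The spaces $\mathcal{H}_1$ and $\mathcal{H}_2$ both have nonzero entries in coordinate $2$, and their orthogonality is exactly the identity $(I-A)BDCP=0$ from point (a) above. Of course orthogonality is automatic from the definition $\mathcal{H}_n=V^n\mathcal{H}\ominus\bigvee_{m=0}^{n-1}V^m\mathcal{H}$; the content is that the explicit formulas are consistent with it, and that consistency check is precisely where $\ker\big((I-A)BDC\big)$ is forced on you. With the two computations above supplied, the induction to general $n\geq 2$ is routine because $V$ acts as a pure shift on coordinates $\geq 3$; without them the proposal is a correct outline of the paper's route but not yet a proof.
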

\begin{proof} First note that
$\mathcal{H}\bigvee V\mathcal{H}= \mathcal{H}\oplus (V\mathcal{H}\ominus \mathcal{H}).$
Let $h\in \mathcal{H},$ then notice that
\begin{align*}
Vh&=\big( BDCh \oplus (A-I)CBCh
\oplus Dh\oplus 0\oplus \cdots \big),\\
V^2h&=\Big( A(BDC)^2h\oplus
A(A-I)CBCBDCh\oplus DBDCh-(A-I)C^*CBCh\\
& \;\;\; ~~~~~~~~~~~~~ \oplus Dh\oplus 0\oplus \cdots
\Big).\end{align*}
Then it is immediate to see that
$$V\mathcal{H}\ominus \mathcal{H}=\{ (
0\oplus (A-I)CBCh\oplus Dh\oplus 0\oplus 0\oplus \cdots ):  h\in
\mathcal{H}  \}.$$
 Next, we shall  find $V^2\mathcal{H}\ominus
(\mathcal{H}\bigvee V\mathcal{H}).$ First, we write $V^2h$ as
 \begin{align*}
V^2h &= \big( A(BDC)^2h\oplus 0\oplus \cdots\big )
+ \big( 0\oplus
(A-I)CBC(ABDCh)\\
& \;\;\;~~~~~~~~~~ \oplus DABDCh\oplus 0\oplus \cdots \big) +\big( 0\oplus
0\oplus (I-A)BCh\oplus Dh\oplus 0\oplus  \cdots \big).\end{align*}
 Obviously, the
first and second  terms  belong to $\mathcal{H}$
 and
$V\mathcal{H}\ominus \mathcal{H}$ respectively.  We want to
understand the last term of this equation.  We claim that
$$
V^2\mathcal{H}\ominus (\mathcal{H}\vee V\mathcal{H})= \big\{ \big(0\oplus
0\oplus (I-A)BCPh\oplus Dh\oplus 0\oplus \cdots \big) : h\in\mathcal{H}
\big\}.$$
 Notice that $$\big\langle \big( 0\oplus (A-I)CBCg\oplus Dg\oplus 0\oplus
0\oplus \cdots \big), \big( 0\oplus 0\oplus (I-A)BCPh\oplus Dh\oplus 0\oplus
\cdots \big)\big\rangle =0$$ as $(I-A)BDCPh=0.$  Therefore, we have $$
\big\{\big(0\oplus 0\oplus (I-A)BCPh\oplus Dh\oplus 0\oplus \cdots \big) :
h\in\mathcal{H}\big \}\subseteq V^2\mathcal{H}\ominus (\mathcal{H}\vee
V\mathcal{H}).$$
Now, it is sufficient to prove that
$$ \big(0\oplus 0\oplus
(I-A)BC(I-P)h\oplus 0\oplus \cdots \big) \in V\mathcal{H}\ominus
\mathcal{H} $$ for all $h \in \mathcal{H}.$ To see this, suppose
\begin{align} \label{Equ: Essential equ}
\Big\langle \big( 0\oplus 0\oplus (I-A)BC(I-P)h\oplus &0\oplus \cdots \big), \nonumber \\
&\big(0\oplus (A-I)CBCg\oplus Dg\oplus 0\oplus \cdots \big) \Big\rangle =0,
\end{align}
for all $ g\in \mathcal{H}.$  This implies that for all $g\in
\mathcal{H},$
\begin{align*}
0&=\big \langle (I-A)BC(I-P)h, Dg \big\rangle \\
&= \big \langle(I-A)BDC(I-P)h, g \big\rangle\\
 &=  \big \langle (I-A)BDCh, g \big\rangle ,
 \end{align*}
as $(I-A)BDC(I-P)h=0.$
 It follows that $$(I-A)BDCh=0 \text{ and } h\in
{\rm ran}{P}.$$
Therefore, we have $$ \big(0\oplus 0\oplus
(I-A)BC(I-P)h\oplus 0\oplus \cdots\big )  =0$$ under the condition
defined Equation \eqref{Equ: Essential equ}. Consequently, we get $$\big(
0\oplus 0\oplus (I-A)BC(I-P)h\oplus 0\oplus \cdots \big)  \in
V\mathcal{H}\ominus \mathcal{H}$$ for all $h\in \mathcal{H}.$ Hence,
we have \begin{align*}
&\big \{ \big( 0\oplus 0\oplus (I-A)BCPh\oplus Dh\oplus 0\oplus
\cdots \big ) : h\in\mathcal{H} \big\}\\
=&V^2\mathcal{H}\ominus (\mathcal{H}\vee
V\mathcal{H}).\end{align*}
Now it is easy to see that
\begin{align*}
&\big\{\big (
0\oplus 0\oplus 0\oplus (I-A)BCPh\oplus Dh\oplus 0\oplus \cdots \big ) :
h\in\mathcal{H} \big\}\\
=&V^3\mathcal{H}\ominus (\mathcal{H}\vee
V\mathcal{H} \vee V^2\mathcal{H}).
\end{align*}
By induction, we can prove
that
\begin{align*}
&\big\{\big (
0\oplus ( n~\mbox{copies}~ ) \oplus (I-A)BCPh\oplus Dh\oplus 0\oplus
\cdots \big ): h\in\mathcal{H} \big\}\\
=&V^n\mathcal{H}\ominus (\bigvee\limits_{m=0}^{n-1} V^m\mathcal{H}).
\end{align*}
\end{proof}

Now we explicitly write down a unitary dilation for the sequence
$\{T_{n}\}_{n \geq 0}.$

\begin{thm} \label{thm: explicit unitary A-dilation}
Let $\mathcal{K}= \ell^{2}(\mathcal{H})\oplus \mathcal{H}^{\oplus3}\oplus \ell^{2}(\mathcal{H})$ and  $U$  acting on $\mathcal{K}$ is defined by the following block matrix representation
\begin{align}\label{Eq: Explicit unitary of CA}
\left[\begin{array}{@{}ccc|cccc|cccc@{}}
\ddots &                     &                    &                             &                             &                                               &      &      &           &\\
          &I                     &                    &                             &                             &                                              &      &      &           &\\
          \hline
          &                      &I                   & 0                            &  0                           &0                                        & 0     &      &           &\\
          &                      &                    &-(A-I) BC^*           &{\bf BDC}               & BDD_{*}                               &0      &      &            &\\
          &                      &                    & B_{*}D_{*}          & (A-I)CBC              &  (A-I)CBD_{*}                        &0      &      &             &\\
          &                      &                    & 0                         &D                           &-C^*                                       & 0     &      &            &\\
\hline
          &                      &                    & 0                        &0                            & 0                                           &I   &       &            &\\
          &                      &                    &                            &                               &                                             &    & I    &             &\\
          &                      &                    &                            &                               &                                             &     &     &\ddots    &\\
          \end{array}\right].
\end{align}
where $BDC$ that appear in the bold font is the $(00)$ entry
of $U,$ and the  $B_{*}$ is given by
 \begin{equation*}
 B_{*}= (I+ A(A-2I)CC^*)^{-\frac{1}{2}}.
 \end{equation*}
Then $U$ is a unitary dilation of $\{T_n\}_{n\geq 0}.$
\end{thm}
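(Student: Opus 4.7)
The plan is to reduce the statement to two tasks: show that $U$ is a unitary operator on $\mathcal{K}$, and verify the moment identity $T_{n} = P_{\mathcal{H}}U^{n}|_{\mathcal{H}}$ for all $n \geq 0$. To manage the block matrix I label the summands of $\mathcal{K} = \ell^{2}(\mathcal{H}) \oplus \mathcal{H}^{\oplus 3} \oplus \ell^{2}(\mathcal{H})$ by integers as $\{\mathcal{H}_{n}\}_{n \in \mathbb{Z}}$, positioning the bold $(0,0)$ entry at $\mathcal{H}_{0}$. From the matrix, $U$ acts as the right bilateral shift $\mathcal{H}_{n} \to \mathcal{H}_{n+1}$ outside the central window (input slots $n \in \{-1, 0, 1\}$, output slots $n \in \{0, 1, 2\}$), in which a single $3 \times 3$ operator block
\[
M = \begin{bmatrix} -(A-I)BC^{*} & BDC & BDD_{*} \\ B_{*}D_{*} & (A-I)CBC & (A-I)CBD_{*} \\ 0 & D & -C^{*} \end{bmatrix} : \mathcal{H}_{-1}\oplus\mathcal{H}_{0}\oplus\mathcal{H}_{1} \to \mathcal{H}_{0}\oplus\mathcal{H}_{1}\oplus\mathcal{H}_{2}
\]
governs the action. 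Since the two shift parts are manifestly unitary between complementary subspaces, $U$ is unitary if and only if $M$ is unitary on $\mathcal{H}^{\oplus 3}$.

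To verify the unitarity of $M$, I would compute the nine entries of each of $M^{*}M$ and $MM^{*}$ using the standing identities from Section~4: $A, B, D$ pairwise commute (likewise $A, B_{*}, D_{*}$); the functional-calculus intertwiners $CB = B_{*}C$, $CD = D_{*}C$, $BC^{*} = C^{*}B_{*}$, $DC^{*} = C^{*}D_{*}$; the defect relations $D^{2} = I - C^{*}C$, $D_{*}^{2} = I - CC^{*}$; and the defining identities $B^{-2} = I + A(A-2I)C^{*}C$, $B_{*}^{-2} = I + A(A-2I)CC^{*}$. A representative diagonal calculation is
\[
(M^{*}M)_{11} = (A-I)^{2} B_{*}^{2}CC^{*} + B_{*}^{2}D_{*}^{2} = B_{*}^{2}\bigl[(A-I)^{2}CC^{*} + I - CC^{*}\bigr] = B_{*}^{2} \cdot B_{*}^{-2} = I,
\]
and the off-diagonals collapse via cancellations such as $B_{*}^{2}D_{*} = D_{*}B_{*}^{2}$ after pushing $C$ through the functional-calculus expressions. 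As a shortcut, the last two columns of $M$ form precisely the nontrivial $3\times 2$ block of the isometric dilation $V$ of Theorem \ref{thm: explicit isometric A-dilation}, so four of the nine entries of $M^{*}M$ are already known from $V^{*}V = I$. The $MM^{*}$ entries require parallel computations in which $CC^{*} + D_{*}^{2} = I$ or $C^{*}C + D^{2} = I$ is repeatedly invoked to collapse cross-terms; for instance, $(MM^{*})_{11} = (A-I)^{2}B^{2}C^{*}C + BD(CC^{*} + D_{*}^{2})DB = B^{2}[(A-I)^{2}C^{*}C + D^{2}] = I$.

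For the moment identity, the key observation is that the subspace $\bigoplus_{n \geq 0}\mathcal{H}_{n}$ is $U$-invariant: inputs from $\mathcal{H}_{0}$ and $\mathcal{H}_{1}$ land in $\mathcal{H}_{0}\oplus\mathcal{H}_{1}\oplus\mathcal{H}_{2}$ via the central block, and inputs from $\mathcal{H}_{n}$ with $n \geq 2$ shift to $\mathcal{H}_{n+1}$. Reading the matrix entries on this subspace, $U\big|_{\bigoplus_{n \geq 0}\mathcal{H}_{n}}$ coincides exactly with the isometric dilation $V$ of Theorem \ref{thm: explicit isometric A-dilation} under the identification $\bigoplus_{n \geq 0}\mathcal{H}_{n} \cong \mathcal{H}^{\oplus 3}\oplus \ell^{2}(\mathcal{H})$. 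Hence, for $h \in \mathcal{H} = \mathcal{H}_{0}$, we have $U^{n}h = V^{n}h$ in $\mathcal{K}$, so $P_{\mathcal{H}}U^{n}h = P_{\mathcal{H}}V^{n}h = T_{n}h$ by Theorem \ref{thm: explicit isometric A-dilation}.

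The main technical obstacle will be the bookkeeping for $MM^{*} = I$: the operators $B$ and $B_{*}$ (and $D$ and $D_{*}$) do not commute pairwise because $C^{*}C$ and $CC^{*}$ do not, so each entry demands moving $C$ or $C^{*}$ across functional-calculus terms at precisely the right moments and then invoking $CC^{*} + D_{*}^{2} = I$ or the defining formula for $B_{*}^{-2}$ to close. The analogue $(B_{*}D_{*}C^{*})^{2} = I - B_{*}^{2}(A-I)^{2}CC^{*}$ of Equation \eqref{Eq: BDC expression} is a handy companion identity in the starred computations.
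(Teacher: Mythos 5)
Your proposal is correct and follows essentially the same route as the paper: both isolate the central block $M$ (the paper pads it to a $4\times 4$ block and checks $M^{*}M$, $MM^{*}$ are the complementary projections, which is the same as your $3\times 3$ unitarity check), and both obtain the moment identity by observing that the forward part of $U$ reduces to the isometric dilation $V$ of Theorem \ref{thm: explicit isometric A-dilation}, equivalently to Lemma \ref{basic lemma}. One small slip in your closing remark: the starred analogue of Equation \eqref{Eq: BDC expression} should read $(B_{*}D_{*})^{2}=I-B_{*}^{2}(A-I)^{2}CC^{*}$, without the extra factor of $C^{*}$ inside the square.
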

\begin{proof} Consider the block operator matrix $M$ defined by:
\begin{align}
M= \begin{bmatrix}
&0                    &0&0&0\\
&-(A-I) BC^*    &{\bf BDC}& BDD_{*}&0\\
& B_{*}D_{*}    &(A-I)CBC&(A-I)CBD_{*}&0\\
&0                   &D&-C^*&0
\end{bmatrix}.
\end{align}
It is straight forward to verify that $$M^*M=
\begin{bmatrix}
I&0&0&0\\
0&I&0&0\\
0&0&I&0\\
0&0&0&0\\
\end{bmatrix}, ~~~~MM^*=
\begin{bmatrix}
0&0&0&0\\
0&I&0&0\\
0&0&I&0\\
0&0&0&I\\
\end{bmatrix}.$$ This clearly implies that  $U^*U= UU^*=I.$ Finally, we obtain the desired claim
\begin{align*}
P_{\mathcal{H}}U^{n}|_{\mathcal{H}}= P_{\mathcal{H}}M^{n}|_{\mathcal{H}} = T_{n} \text{ for all } n
\geq 0.
\end{align*}
This completes the proof.
\end{proof}
 As we discussed earlier, minimal dilation space of the unitary dilation is of the form $$\mathcal{K}=\bigvee^{\infty}\limits_{n=-\infty} U^{n} \mathcal{H}.$$ The minimal dilation space can be written as
  $$\mathcal{K}_{+}\bigvee \mathcal{K}^{-}=\mathcal{K},
\text{ where }  \mathcal{K}_{+}=\bigvee^{\infty}\limits_{n=0} U^{n}
\mathcal{H},$$ and $\mathcal{K}^{-}=\bigvee^{-\infty}\limits_{n=0}
U^{n} \mathcal{H}.$ Since $U|_{\mathcal{K}_+}$ is the isometry $V$
constructed before, the decomposition of $\mathcal{K}_+$ is clear.

\begin{rmk}\label{rmk: explicit minimal dilation space}  $\mathcal{K}_{-}$ can be decomposed as
$$
\mathcal{K}_{-}
=\bigoplus_{n=1}^{\infty} \mathcal{H}_{-n}
$$
$\text{ where } \mathcal{H}_{-1}:= U^*\mathcal{H}\ominus
\mathcal{H}, \mathcal{H}_{-n}:= U^{*n}\mathcal{H}\ominus \bigvee
\limits_{m=0}^{n-1} U^{*m}\mathcal{H}   \text{ for } n \geq 2.$
Moreover,  \begin{align*}
     \mathcal{H}_{-1}&:= \big\{ \big( \cdots\oplus
-(A-I)CBh\oplus \textbf{0}\oplus D_{*}DBh\oplus 0\oplus \cdots \big ):
h\in \mathcal{H} \big \},
\end{align*}
\begin{align*}
 \mathcal{H}_{-n} := \Big\{\big( \cdots\oplus
-(A-I)&CBCh\oplus B^{-\frac{1}{2}}_{*}DBQh\\
& \;\;\;\;\;\ \oplus 0 \oplus (-n+2)
~\mbox{times}~\oplus \textbf{0}\oplus \cdots \big): h\in\mathcal{H} \Big\}
\end{align*}
and $Q$ is the projection onto $\ker(A-1)B^{\frac{1}{2}}C^*DB.$
\end{rmk}

\subsection{Special case ($\mathcal{C}_{\rho}$-class)} The case when $A$ is a positive scalar $\rho,$ corresponds to the $\mathcal{C}_{\rho }$-
class of operators. This class is very rich and widely studied. However, the explicit block operator description of isometric and unitary dilations of operators \textcolor{black}{in this class} is unknown in the
literature except for $\rho =2$. An operator $T$ is in the
$\mathcal{C}_2$ class if and only if $w(T)\leq 1,$ where $w(T)$ denotes the
numerical range of $T$. The unitary dilation of this class was exhibited
by T. Ando \cite{Ando 73} and coincides with the following
construction (with $\rho =2$).
We recall from Durzst \cite{Durszt75} that $T\in \mathcal{C}_{\rho}
(\rho
>0)$ if and  only if $T$ is of the form:
$$
T= \rho(1+ \rho(\rho-2)C^*C)^{-\frac{1}{2}}DC,
$$
where  $\rho>0, C $ is a contraction and $D=(I-C^*C)^{\frac{1}{2}}.$
 In this case, we may write down isometric and unitary dilations as above. We observe that $$B=(I +
\rho(\rho-2)C^*C)^{-\frac{1}{2}}, B_{*}=(I +
\rho(\rho-2)CC^*)^{-\frac{1}{2}}$$ and $$T_{n}=
\rho^{n-1}(BDC)^n=\frac{(\rho BDC)^n}{\rho}=\frac{T^{n}}{\rho}.$$

\begin{rmk}
    Moreover, for the case $\rho=2,$ being $B=B^*=I,$ the
    operators $B, B^*$ will not play any role in the unitary and isometric dilation of the $\mathcal{C}_{2}$-class.
\end{rmk}
 \begin{rmk}Lemma \ref{basic lemma} yields a crucial observation. If $T \in \mathcal{C_{\rho}},$ then $\frac{T^{*n}}{\rho}$ admits a partial isometric dilation. By utilizing this partial isometric dilation, \textcolor{black}{one can understand the dilation of such class operators completely}. It is worth noting that this observation was previously unknown even in the case when $\rho=2$. For further details on Ando's dilation theorem for $\rho=2$, we recommend referring to \cite{Ando 73} (also see \cite{DW96}).

 \end{rmk}
\begin{Question} How to write block decompositions for isometric and
unitary dilations of general operators of the $\mathcal{C}_A$ class?
\end{Question}



\subsection*{Acknowledgment}
The first named author is funded by
the J C Bose Fellowship JBR/2021/000024 of SERB(India). The second named
author is supported by the NBHM postdoctoral fellowship, Department
of Atomic Energy (DAE), Government of India (File No. 0204/1(4)/2022/ R\&D-II/1198). The third named author is funded by the Startup Research Grant (File No. SRG/2022/001795) of SERB, India.
The authors also acknowledge the Stat-Math Unit of the Indian Statistical Institute Bangalore Centre for providing an excellent research environment. The second named author also gained valuable insights from several informative discussions with Tirthankar Bhattacharyya. 

We express our sincere gratitude to the anonymous referee for providing valuable suggestions that helped improve the presentation of our manuscript.

\end{document}